\newtheorem{thm}{Theorem}[section]
\newtheorem{cor}[thm]{Corollary}
\newtheorem{fact}[thm]{Fact}
\newtheorem{lemma}[thm]{Lemma}
\newtheorem{prop}[thm]{Proposition}
\theoremstyle{definition}
\newtheorem{definition}[thm]{Definition}
\newtheorem{ex}[thm]{Example}
\newtheorem{exercice}[thm]{Exercice}
\def\rquotient#1#2{%
	\makeatletter
	\raise.3ex\hbox{$#1$}/\lower.3ex\hbox{$#2$}%
	\makeatother
}	
\newcommand{\subjclass}[2][2010]{%
	\let\@oldtitle\@title%
	\gdef\@title{\@oldtitle\footnotetext{#1 \emph{Mathematics subject classification.} #2}}%
}
\newcommand{\keywords}[1]{%
	\let\@@oldtitle\@title%
	\gdef\@title{\@@oldtitle\footnotetext{\emph{Key words and phrases.} #1.}}%
}
\newcommand{\Address}{{
		\bigskip
		\small
		
		\textsc{University of Montpellier\\ 
Institut Math\'ematiques Alexander Grothendieck\\
Place Eug\`ene Bataillon\\
34090 Montpellier (France)}\par\nopagebreak
		\textit{E-mail address}: \texttt{anthony.genevois@umontpellier.fr}
		
}}
\title{Mini-course on the embedding problem between RAAGs}
\date{CIRM, 11-15 April, 2022}
\author{Anthony Genevois}
\begin{document}

\maketitle

\begin{abstract}
Given a graph $\Gamma$, the \emph{right-angled Artin group} $A(\Gamma)$, also referred to as a \emph{partially commutative group}, is given by the presentation $\langle u \in V(\Gamma) \mid [u,v]=1, \ \{u,v\} \in E(\Gamma) \rangle$. This mini-course is dedicated to the Embedding Problem in right-angled Artin groups, namely: given two finite graphs $\Phi,\Psi$, how to determine whether or not $A(\Phi)$ is isomorphic to a subgroup of $A(\Psi)$? We describe a geometric framework, based on the geometry of \emph{quasi-median graphs}, in which this question can be studied.
\end{abstract}

\tableofcontents

\vspace{1cm}

\medskip \noindent
The Embedding Problem in right-angled Artin groups is still widely open, but a major progress has been recently made in \cite{MR3039768}. The results we present in Lecture~3 essentially come from this work, but our approach is completely different. While in \cite{MR3039768} mapping class groups play a central role as auxiliary objects, here we describe how the quasi-median geometry of right-angled Artin groups leads, in our opinion, to a more natural and conceptually simpler formulation of the results in \cite{MR3039768}. This point of view comes from \cite{QM}.

\newpage

\section{Lecture 1: A crash course on RAAGs}

\noindent
In this mini-course, we focus on a specific family of groups, namely \emph{right-angled Artin groups}, or \emph{RAAGs} for short. 

\begin{definition}\label{def:RAAG}
Let $\Gamma$ be a graph. The \emph{right-angled Artin group} $A(\Gamma)$ is the group given by the presentation
$$\langle u \in V(\Gamma) \mid [u,v]=1, \ \{u,v\} \in E(\Gamma) \rangle,$$
where $V(\Gamma)$ and $E(\Gamma)$ denote the vertex- and edge-sets of $\Gamma$. 
\end{definition}

\noindent
Usually, one says that right-angled Artin groups interpolate between free groups and free abelian groups. This is because, if $\Gamma$ has no edge, then there is no relation and $A(\Gamma)$ is a free group of rank $|V(\Gamma)|$; and, if $\Gamma$ is a complete graph, i.e. if any two vertices of $\Gamma$ are connected by an edge, then $A(\Gamma)$ is a free abelian group of rank $|V(\Gamma)|$. As other examples, observe that $A(\Gamma) \simeq \mathbb{F}_2 \oplus \mathbb{F}_2$ if $\Gamma$ is a $4$-cycle and $A(\Gamma) \simeq \mathbb{Z} \oplus \mathbb{F}_2$ if $\Gamma$ is a path of length two.

\medskip \noindent
To which extend does one understand right-angled Artin groups? In order to answer such a question, one can ask basic questions such as: When is a right-angled Artin group trivial? infinite? How to know whether two given elements commute? What are the possible finite subgroups of a given right-angled Artin group? and its possible solvable subgroups? How to know whether two given right-angled Artin groups are isomorphic? It turns out that all these questions are well understood. Let us give some details in this direction.

\paragraph{WORD PROBLEM.} Fix a graph $\Gamma$. Because the vertices of $\Gamma$ provide generators of $A(\Gamma)$, every element of $A(\Gamma)$ can be described as a word written over the alphabet $V(\Gamma) \sqcup V(\Gamma)^{-1}$. Clearly, the following operations do not modify the element of $A(\Gamma)$ represented by a given word $w:= u_1 \cdots u_n$. 
\begin{description}
	\item[(Free reduction)] If $u_{i+1}=u_i^{-1}$ or $u_i=u_{i+1}^{-1}$, remove the two letters from $w$.
	\item[(Shuffling)] If $u_i$ and $u_{i+1}$ are given by two vertices adjacent in $\Gamma$, replace $u_iu_{i+1}$ with $u_{i+1}u_i$ in $w$.
\end{description}
A word is \emph{graphically reduced}, or \emph{$\Gamma$-reduced} if one wants to specify $\Gamma$, if it cannot be shortened by applying shufflings and free reductions. 

\begin{prop}\label{prop:NormalForm}
Let $g \in A(\Gamma)$ be an element. A word has minimal length among all the words representing $g$ if and only if it is graphically reduced. Moreover, any two graphically reduced words representing $g$ differ only by finitely many shufflings.
\end{prop}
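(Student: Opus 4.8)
The plan is to deduce the first assertion from the second together with the trivial remark that a word which is not graphically reduced cannot have minimal length, since a free reduction strictly shortens a word while a shuffling preserves its length. Indeed, granting the second assertion, any word $v$ representing $g$ can be turned, by shufflings and free reductions, into a graphically reduced word $v'$ with $|v'|\le|v|$ still representing $g$; by the second assertion all graphically reduced representatives of $g$ have the common length $|v'|$, so being graphically reduced is equivalent to having minimal length. Everything thus reduces to the second assertion, for which I would use the combinatorics of \emph{heaps of pieces}. To a word $w=u_1\cdots u_n$ over $V(\Gamma)\sqcup V(\Gamma)^{-1}$ associate the labelled poset $H(w)$ with underlying set $\{1,\dots,n\}$, with $i$ labelled by $u_i$, and with order generated by the relations $i\prec j$ for $i<j$ such that $u_i,u_j$ lie on the same vertex or on two non-adjacent vertices of $\Gamma$ (this is automatically a partial order, being compatible with the natural order of the indices). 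A shuffling of $w$ is exactly a transposition of two consecutive \emph{incomparable} elements of $H(w)$, and since any two linear extensions of a finite poset are connected by such transpositions, two words are related by shufflings precisely when their heaps are isomorphic as labelled posets. Moreover $w$ is graphically reduced if and only if $H(w)$ is \emph{reduced}, meaning it has no covering relation $i\lessdot j$ with $u_i,u_j$ inverse letters carried by the same vertex: such a pair produces, after shufflings, a subword $u_iu_j$ that one can freely reduce, and conversely if $w$ can be shortened then the first word it produces which admits a free reduction is obtained from $w$ by shufflings alone, hence has the same heap, which therefore contains such a pair.

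It remains to show that two graphically reduced words representing the same element of $A(\Gamma)$ have isomorphic heaps. For this I would construct a right action of $A(\Gamma)$ on the set $\mathcal H$ of reduced heaps. Given $H\in\mathcal H$ and $a\in V(\Gamma)\sqcup V(\Gamma)^{-1}$, let $H'$ be $H$ with a new maximal piece labelled $a$ added on top; if $H'$ is reduced, set $H\cdot a:=H'$; otherwise $H'$ has a \emph{unique} new covering pair, namely the new piece together with the unique maximal piece of $H$ carried by the vertex of $a$ (unique because any two pieces on the same vertex are comparable), which is then necessarily labelled $a^{-1}$, and one sets $H\cdot a$ to be $H'$ with that pair removed. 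One checks this is again reduced, the point being that removing a maximal element of a poset creates no new covering relation. One then verifies the relators: $H\cdot a\cdot a^{-1}=H$ by a short case analysis according to whether $H$ has a maximal piece on the vertex of $a$, and $H\cdot a\cdot b=H\cdot b\cdot a$ for $\{a,b\}\in E(\Gamma)$ because pieces on adjacent vertices are independent, so are added in parallel and the possible cancellations on the two vertices do not interfere. Hence the free group on $V(\Gamma)$ acts on $\mathcal H$ through mutually inverse bijections, and the action descends to $A(\Gamma)$. Finally, every prefix of a graphically reduced word is again graphically reduced, so building $H(w)$ up letter by letter never triggers a cancellation; therefore $\varnothing\cdot w=H(w)$ for the empty heap $\varnothing$, and $w=_{A(\Gamma)}w'$ gives $H(w)=\varnothing\cdot w=\varnothing\cdot w'=H(w')$, as desired.

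The step I expect to require the most care is the well-definedness of this action: one must check that adding a maximal piece produces at most one new covering pair, that deleting a maximal piece preserves reducedness, and then run, briefly, through the relator identities — the delicate case being $H\cdot a\cdot a^{-1}=H$ when removing the newly added piece exposes a piece on the vertex of $a$ that was not previously maximal. The auxiliary fact that the linear extensions of a finite poset form a connected graph under adjacent transpositions is classical and should simply be cited.
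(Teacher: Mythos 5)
Your argument is correct, but it takes a genuinely different route from the one the paper intends. The paper's proof (outlined in Exercice~\ref{ex:WP}) is geometric: a word representing the identity bounds a van Kampen diagram over the presentation of Definition~\ref{def:RAAG}, such a diagram is a square complex carrying dual curves labelled by vertices of $\Gamma$, and one shows that a path labelled by a graphically reduced word crosses no dual curve twice; this yields minimality and the shuffling statement, and the dual curves are precisely the germs of the hyperplanes exploited in the later lectures, so the proof doubles as an introduction to the median geometry of $A(\Gamma)$. Yours is the classical Cartier--Foata/Viennot argument: encode shuffling classes of words as labelled posets (heaps), characterise graphical reducedness as the absence of a cancelling covering pair, and run a van der Waerden-style trick by letting $A(\Gamma)$ act on reduced heaps so that a graphically reduced word, read letter by letter from the empty heap, rebuilds its own heap. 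This is purely combinatorial, presupposes no solution of the word problem, and transfers verbatim to arbitrary graph products (Green's normal form) by labelling pieces with nontrivial elements of the vertex groups; what it does not provide is the geometric picture used in the rest of the course. One point to rephrase when you write the details: after adding a new maximal piece $p$ labelled $a$ on top of a reduced heap $H$, it is not true that there is a unique new covering relation --- $p$ may cover several mutually incomparable pieces lying on distinct vertices non-adjacent to that of $a$. What is true, and what your argument actually needs, is that at most one new cover can violate reducedness, namely the one whose lower element is the top piece of $H$ on the vertex of $a$; and when this happens that piece is automatically maximal in all of $H$ (any element above it that conflicts with $p$ would be interposed between it and $p$, contradicting the cover), which is exactly why deleting the pair preserves reducedness and why the relator identities go through as you describe.
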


\noindent
See Exercice~\ref{ex:WP} for a proof. It follows that the word problem can be easily solved. Given a word in generators, apply shufflings and free reductions as much as possible. Since there exist only finitely many possible words of a given length, the process has to stop, given a graphically reduced word. The element represented by our initial word is then trivial if and only if the graphically reduced word thus obtained is empty.

\medskip \noindent
As an explicit example, assume that $\Gamma$ is a $5$-cycle. Index its vertices cyclically as $a,b,c,d,e$. We claim that $eade^{-1}c=bacb^{-1}d$. Indeed,
$$eade^{-1}c \overset{\text{shuffling}}{\longrightarrow} aede^{-1}c \overset{\text{shuffling}}{\longrightarrow} adee^{-1}c \underset{\text{reduction}}{\overset{\text{free}}{\longrightarrow}} adc$$
gives a graphically reduced word, and similarly
$$bacb^{-1}d \overset{\text{shuffling}}{\longrightarrow} abcb^{-1}d \overset{\text{shuffling}}{\longrightarrow} acbb^{-1}d  \underset{\text{reduction}}{\overset{\text{free}}{\longrightarrow}} acd$$
gives another graphically reduced word. But $adc$ can be transformed into $acd$ by a shuffling, hence the desired equality. 

\medskip \noindent
The (almost) normal form provided by Proposition~\ref{prop:NormalForm} is extremely useful. Let us mention a few easy applications. 

\begin{cor}\label{cor:SubGraph}
Let $\Gamma$ be a graph. For every induced subgraph $\Lambda$, the inclusion map $V(\Lambda) \to V(\Gamma)$ induces an injective morphism $A(\Lambda) \hookrightarrow A(\Gamma)$.
\end{cor}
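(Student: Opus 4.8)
The plan is to first construct the morphism in the evident way and then certify injectivity via the normal form of Proposition~\ref{prop:NormalForm}. Since the vertices of $\Lambda$ are among those of $\Gamma$, the set map $V(\Lambda) \to V(\Gamma)$ sends each generator of $A(\Lambda)$ to a generator of $A(\Gamma)$; to see that it extends to a group homomorphism $\varphi \colon A(\Lambda) \to A(\Gamma)$ it suffices, by the universal property of a group presentation, to check that every defining relator of $A(\Lambda)$ maps to the identity. A defining relator of $A(\Lambda)$ has the form $[u,v]$ with $\{u,v\} \in E(\Lambda)$; because $\Lambda$ is an \emph{induced} subgraph, $\{u,v\} \in E(\Lambda)$ forces $\{u,v\} \in E(\Gamma)$, so $[u,v]=1$ already holds in $A(\Gamma)$. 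Hence $\varphi$ is well defined.

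For injectivity, I would show that $\varphi$ carries $\Lambda$-reduced words to $\Gamma$-reduced words. Let $g \in A(\Lambda)$ be nontrivial and fix a word $w = u_1 \cdots u_n$ over $V(\Lambda) \sqcup V(\Lambda)^{-1}$ representing $g$ that is of minimal length; by Proposition~\ref{prop:NormalForm} such a $w$ is $\Lambda$-reduced, and $n \geq 1$ since $g \neq 1$. Now view $w$ as a word over $V(\Gamma) \sqcup V(\Gamma)^{-1}$; it represents $\varphi(g)$. The two operations that can shorten a word, free reduction and shuffling, behave identically whether we work in $\Lambda$ or in $\Gamma$: free reduction does not refer to the graph at all, while a shuffling exchanging $u_i u_{i+1}$ is available in $\Gamma$ exactly when the two underlying vertices are adjacent in $\Gamma$, which, since they both lie in $\Lambda$ and $\Lambda$ is induced, is equivalent to their being adjacent in $\Lambda$. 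Thus any sequence of shufflings and free reductions applicable to $w$ inside $A(\Gamma)$ is already applicable inside $A(\Lambda)$; as $w$ is $\Lambda$-reduced, it is therefore $\Gamma$-reduced. Applying Proposition~\ref{prop:NormalForm} in $A(\Gamma)$, the minimal length of a word representing $\varphi(g)$ equals $n \geq 1$, so $\varphi(g) \neq 1$. Hence $\ker \varphi$ is trivial and $\varphi$ is injective.

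I do not expect a genuine obstacle here: the entire content is the observation that the hypothesis "$\Lambda$ induced" is precisely what makes the shuffling moves of $\Gamma$ restricted to letters of $\Lambda$ coincide with the shuffling moves of $\Lambda$, after which everything is a direct appeal to the normal form. The only point requiring a moment's care is that, when re-reading $w$ over the larger alphabet, one cannot gain new shuffling moves by passing through a vertex of $\Gamma \setminus \Lambda$, since no such vertex occurs in $w$.
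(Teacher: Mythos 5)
Your argument is correct and is essentially the paper's own proof, spelled out in more detail: the key point in both is that a word over $V(\Lambda) \sqcup V(\Lambda)^{-1}$ is $\Lambda$-reduced if and only if it is $\Gamma$-reduced (precisely because $\Lambda$ is induced), after which Proposition~\ref{prop:NormalForm} gives injectivity. No issues.
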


\noindent
Recall that a subgraph $\Lambda \subset \Gamma$ is \emph{induced} if two vertices of $\Lambda$ are adjacent in $\Lambda$ if and only if they are adjacent in $\Gamma$. 

\begin{proof}[Proof of Corollary~\ref{cor:SubGraph}.]
Since a word written over $V(\Lambda) \sqcup V(\Lambda)^{-1}$ is $\Lambda$-reduced if and only if it is $\Gamma$-reduced, the desired conclusion follows from Proposition~\ref{prop:NormalForm}. 
\end{proof}

\begin{cor}\label{cor:List}
Let $\Gamma$ be a graph.
\begin{itemize}
	\item[(i)] $A(\Gamma)$ is free if and only if $\Gamma$ has no edge. Otherwise, $A(\Gamma)$ contains $\mathbb{Z}^2$. 
	\item[(ii)] $A(\Gamma)$ is abelian if and only if $\Gamma$ is a complete graph. Otherwise, $A(\Gamma)$ contains $\mathbb{F}_2$.
	\item[(iii)] $A(\Gamma)$ splits non-trivially as a free product if and only if $\Gamma$ is disconnected. If so, $A(\Gamma) = \underset{1 \leq i \leq n}{\ast} A(\Gamma_i)$ where $\Gamma_1, \ldots, \Gamma_n$ are the connected components of $\Gamma$. 
	\item[(iv)] $A(\Gamma)$ splits non-trivially as a direct product if and only if $\Gamma^{\mathrm{opp}}$ is disconnected. If so, $A(\Gamma)= \bigoplus\limits_{1 \leq i \leq n} A(\Gamma_i)$ where $\Gamma_1, \ldots, \Gamma_n$ are the induced subgraphs of $\Gamma$ given by the connected components of $\Gamma^\mathrm{opp}$. 
\end{itemize}
\end{cor}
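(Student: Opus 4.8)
\emph{Plan.} I would treat the four items in turn, in each case peeling off an elementary inclusion that comes straight from Proposition~\ref{prop:NormalForm} and Corollary~\ref{cor:SubGraph}, and isolating the structural statement that carries the content. Items (i) and (ii) are immediate in both directions: if $\Gamma$ has no edge the presentation of Definition~\ref{def:RAAG} has no relators, so $A(\Gamma)$ is free on $V(\Gamma)$; if $\Gamma$ is complete every two generators commute, and Proposition~\ref{prop:NormalForm} shows each element is uniquely a product $\prod_u u^{n_u}$, so $A(\Gamma)\cong\mathbb{Z}^{|V(\Gamma)|}$. Conversely, if $\Gamma$ has an edge $\{u,v\}$ (resp.\ a pair $u,v$ of non-adjacent vertices), the subgraph induced on $\{u,v\}$ is a single edge (resp.\ has no edge), so Corollary~\ref{cor:SubGraph} gives $\langle u,v\rangle\cong\mathbb{Z}^2$ (resp.\ $\cong\mathbb{F}_2$); as a free group has no subgroup isomorphic to $\mathbb{Z}^2$ and an abelian group none isomorphic to $\mathbb{F}_2$, this both contradicts freeness (resp.\ commutativity) and settles the ``otherwise'' clauses.

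The ``if'' directions of (iii) and (iv) I would simply read off the presentation. If $\Gamma$ is disconnected with components $\Gamma_1,\dots,\Gamma_n$, then $V(\Gamma)=\bigsqcup V(\Gamma_i)$ and $E(\Gamma)=\bigsqcup E(\Gamma_i)$, so the presentation of $A(\Gamma)$ is exactly the free product of those of the $A(\Gamma_i)$, a non-trivial free product since $n\ge2$ and every $\Gamma_i$ has a vertex. If instead $\Gamma^{\mathrm{opp}}$ is disconnected, with components spanning vertex sets $V_1,\dots,V_n$, then any two vertices lying in distinct $V_i$ are adjacent in $\Gamma$, i.e.\ $\Gamma$ is the join of the induced subgraphs $\Gamma_i=\Gamma[V_i]$; adjoining to the free-product presentation all the commutators between generators of distinct $V_i$ --- now legitimate relators --- yields the standard presentation of $\bigoplus_i A(\Gamma_i)$, again non-trivial.

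For the ``only if'' direction of (iii), if $|V(\Gamma)|\le1$ then $A(\Gamma)\in\{1,\mathbb{Z}\}$ is freely indecomposable, so assume $|V(\Gamma)|\ge2$ and suppose $A(\Gamma)=A\ast B$ with both factors non-trivial; let $T$ be the associated Bass--Serre tree, whose edge stabilisers are trivial. For a vertex $u$ of $\Gamma$ with a neighbour $v$, the subgroup $\langle u,v\rangle\cong\mathbb{Z}^2$ is non-free and freely indecomposable, hence by the Kurosh subgroup theorem conjugate into $A$ or into $B$; in particular $u$ fixes a vertex of $T$, and since an element fixing two distinct vertices of $T$ would fix an edge, $u$ fixes a \emph{unique} vertex $x_u$. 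Adjacent $u,v$ then lie in a common vertex stabiliser, so $x_u=x_v$; connectedness of $\Gamma$ makes $u\mapsto x_u$ constant, so $A(\Gamma)=\langle V(\Gamma)\rangle$ fixes a vertex of $T$ and hence lies in a conjugate of $A$ (say), forcing $B=1$ --- a contradiction.

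The ``only if'' direction of (iv) is where I expect the real work. The scheme just used breaks down because a direct factor of $A(\Gamma)$ need not be compatible with $V(\Gamma)$: already $A(\Gamma)\cong\mathbb{Z}^2\times\mathbb{F}_2$, with $\Gamma$ the join of an edge and a pair of non-adjacent vertices, admits ``twisted'' decompositions in which some vertex has non-trivial projection to both factors. I would first reduce to the centreless case --- if $\Gamma^{\mathrm{opp}}$ is connected with $\ge2$ vertices it has no isolated vertex, so $\Gamma$ has no vertex adjacent to all others and $Z(A(\Gamma))=1$, hence $Z(G_1)=Z(G_2)=1$ in any non-trivial decomposition $A(\Gamma)=G_1\times G_2$ --- and then analyse vertex centralizers: writing a vertex $u$ as $u=p(u)q(u)$ with $p(u)\in G_1$ and $q(u)\in G_2$, if both parts are non-trivial then comparing $C_{A(\Gamma)}(u)=C_{G_1}(p(u))\times C_{G_2}(q(u))$ with the known form $C_{A(\Gamma)}(u)=\langle u\rangle\times A(\mathrm{lk}(u))$ --- using that $p(u),q(u)$ are infinite-order elements of the centres of their own centralizers, by torsion-freeness of $A(\Gamma)$ --- forces $A(\mathrm{lk}(u))$ to have non-trivial centre, i.e.\ $\mathrm{lk}(u)$ to contain a cone vertex. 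Turning such local data into a global join decomposition of $\Gamma$, contradicting connectedness of $\Gamma^{\mathrm{opp}}$, is the delicate point; it amounts to the (near-)uniqueness of direct-product decompositions of RAAGs, and in a lecture-note treatment one might reasonably cite that structural fact rather than reprove it from scratch.
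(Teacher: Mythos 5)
Your treatment of (i) and (ii) is exactly the paper's (embed $\mathbb{Z}^2$ or $\mathbb{F}_2$ via Corollary~\ref{cor:SubGraph}), and your Bass--Serre/Kurosh argument for the ``only if'' half of (iii) is precisely what the paper intends: it relegates (iii) and (iv) to exercises, and the hint for (iii) is that non-cyclic free abelian subgroups of a free product lie in conjugates of the factors, which is the consequence of Kurosh you use. The ``if'' halves of (iii) and (iv), read off the presentations, are fine.

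The genuine gap is the ``only if'' half of (iv). After the (correct) reduction to the centreless case and the local computation at a vertex, you concede that promoting the local data to a global join decomposition is ``the delicate point'' and propose to cite the uniqueness of direct-product decompositions of RAAGs; that leaves the one substantive assertion unproved, and the cited fact is much deeper than what is needed. The paper's intended route (its hint for the corresponding exercise) sidesteps the whole analysis of generators: exhibit a \emph{single} element of $A(\Gamma)$ with cyclic centraliser. Take $g=u_1\cdots u_n$, the product of all the generators in any order. This word is cyclically graphically reduced (every letter occurs exactly once, with exponent $1$), so $\mathrm{supp}(g)=\Gamma$; since $\Gamma^{\mathrm{opp}}$ is connected, $\Gamma$ is irreducible, so the join decomposition of the support has a single factor and $\mathrm{link}(\mathrm{supp}(g))=\varnothing$, whence Theorem~\ref{thm:Centralisers} gives $C(g)=\langle g_1\rangle\cong\mathbb{Z}$. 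On the other hand, in a non-trivial direct product $G_1\times G_2$ of non-trivial torsion-free groups, the centraliser of any element $(g_1,g_2)$ contains $\langle g_1\rangle\times\langle g_2\rangle$ if both coordinates are non-trivial, and contains a whole factor $G_i$ times $\langle g_j\rangle$ otherwise; in every case it contains a copy of $\mathbb{Z}^2$, so no element has cyclic centraliser. Hence $A(\Gamma)$ cannot split (torsion-freeness of $A(\Gamma)$, hence of its direct factors, is the first exercise of Lecture~1; the case $|V(\Gamma)|\le 1$ is trivial). Your centraliser instinct was the right one --- the trick you missed is to apply it to an element of full support rather than to the vertices.
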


\noindent
Here, $\Gamma^{\mathrm{opp}}$ refers to the \emph{opposite graph} of $\Gamma$, namely the graph with vertex-set $V(\Gamma)$ whose edges connect two vertices if and only if they are not adjacent in $\Gamma$. Observe that having a disconnected opposite graph amounts to splitting as a \emph{join}, i.e. $\Gamma$ contains two non-empty induced subgraphs $\Phi,\Psi$ such that $V(\Gamma) = V(\Phi) \sqcup V(\Psi)$ and such that every vertex in $\Phi$ is adjacent to every vertex in $\Psi$. We note $\Gamma= \Phi \ast \Psi$. A graph that is not a join is \emph{irreducible}. 

\begin{proof}[Proof of Corollary~\ref{cor:List}.]
If $\Gamma$ has no edge, then $A(\Gamma)$ must be free since it admits a presentation with no relation. Conversely, if $\Gamma$ contains at least one edge, it follows from Corollary~\ref{cor:SubGraph} that $A(\Gamma)$ contains $\mathbb{Z}^2$, so it cannot be free. 

\medskip \noindent
If $\Gamma$ is a complete graph, then $A(\Gamma)$ is clearly free abelian. Conversely, if $\Gamma$ contains two non-adjacent vertices, it follows from Corollary~\ref{cor:SubGraph} that $A(\Gamma)$ contains $\mathbb{F}_2$, so it cannot be abelian.

\medskip \noindent
We refer to Exercices~\ref{ex:OtherItemOne} and~\ref{ex:OtherItemTwo} for the two remaining items.
\end{proof}

\paragraph{CONJUGACY PROBLEM.} Extending the word problem, the conjugacy problem asks, given two words in generators, whether or not the two corresponding elements in the group are conjugate. Similarly to the word problem, it is possible to choose a good representative, which is more or less unique, in each conjugacy class. More precisely, a word $u_1 \cdots u_n$ is \emph{cyclically graphically reduced} if it is graphically reduced and if there do not exist $1 \leq i < j \leq n$ such that $u_i= u_j^{- 1}$ and such that $u_i$ (resp. $u_j$) can be shuffled to the beginning (resp. to the end) of the word. Then:

\begin{prop}\label{prop:ConjProb}
Let $\Gamma$ be a graph. For every $g \in A(\Gamma)$, there exists a cyclically graphically reduced word $w$, unique up to shufflings and cyclic permutations, such that $g$ and $w$ are conjugate in $A(\Gamma)$. 
\end{prop}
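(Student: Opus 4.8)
The plan is to build the cyclically graphically reduced representative by an iterative "cyclic reduction" procedure, and then establish uniqueness by a separate argument using the structure of graphically reduced words from Proposition~\ref{prop:NormalForm}.

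First I would prove existence. Start with any word $w$ representing $g$; by Proposition~\ref{prop:NormalForm} we may pass to a graphically reduced word, which does not change the conjugacy class. Now suppose $w = u_1 \cdots u_n$ is graphically reduced but fails to be cyclically graphically reduced: there exist $i < j$ with $u_i = u_j^{-1}$, with $u_i$ shuffleable to the front and $u_j$ shuffleable to the back. Then $g$ is conjugate to $u_i^{-1} w u_i$, which after the shuffles and one free reduction becomes a strictly shorter word; pass to its graphically reduced form (still strictly shorter). Since word length in $A(\Gamma)$ is a non-negative integer that strictly decreases at each such step, the process terminates, yielding a cyclically graphically reduced word $w_0$ conjugate to $g$. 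This handles existence.

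The harder part is uniqueness up to shufflings and cyclic permutations. Suppose $w = u_1 \cdots u_n$ and $w' = v_1 \cdots v_m$ are both cyclically graphically reduced and conjugate. First, being cyclically graphically reduced forces both to have minimal length in their conjugacy class (a cyclic permutation of a cyclically reduced word is again graphically reduced, and any shorter conjugate would, after reducing $h^{-1}wh$, exhibit a cancellation contradicting cyclic reducedness), so $n = m$. The core claim is: if $v = g^{-1} u g$ with $u,v$ cyclically graphically reduced of the same length, then $v$ is obtained from $u$ by shufflings and cyclic permutations. I would prove this by induction on the graphically reduced length $|g|$. The base case $|g|=0$ is trivial. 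For the inductive step, write $g = x g'$ with $x \in V(\Gamma)^{\pm 1}$ a letter that can be shuffled to the front of a graphically reduced word for $g$, so $|g'| = |g|-1$. Then $v = g'^{-1} (x^{-1} u x) g'$. The key subcase analysis: either $x^{-1}$ cancels a letter when brought into $u$ from the left — but then by cyclic reducedness of $u$ that letter can be shuffled to the front, so $x^{-1} u x$ reduces; however it reduces to a cyclic permutation of $u$ (move the front letter to the back), preserving cyclic reducedness and length, and we apply induction with $g'$. Or $x^{-1}$ does not cancel on the left; then since $|x^{-1}ux| = |u|$ is forced (lengths of $v$ and $u$ agree and conjugation can only decrease then increase length), $x^{-1}$ must cancel the $x$ on the right after shuffling, which again forces a shuffling relation inside $u$, and one checks $x^{-1} u x$ equals $u$ up to shufflings. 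Either way we reduce to the case of the shorter conjugator $g'$, completing the induction.

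The main obstacle I anticipate is the bookkeeping in this inductive step: making precise the statement "the letter $x$ can be shuffled to the front," tracking how a single generator interacts with a graphically reduced word under conjugation, and ruling out the intermediate possibility that $|x^{-1}ux| < |u|$ while the final $|v| = |u|$ — this requires a careful pigeonhole argument on where length is lost and regained, essentially showing that any length drop when inserting $x^{-1}$ on the left must be compensated by the $x$ on the right "passing through" $u$ via shuffles, which is exactly a cyclic permutation. An alternative, perhaps cleaner, route for uniqueness is to use the action of $A(\Gamma)$ on an appropriate CAT(0) cube complex or quasi-median graph (foreshadowed in the abstract): cyclically reduced words correspond to combinatorial geodesic axes, and conjugacy becomes equality of axes up to the group action, from which the shuffling-and-cyclic-permutation ambiguity is read off geometrically. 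I would likely present the elementary combinatorial proof here and defer the geometric viewpoint to the later lectures.
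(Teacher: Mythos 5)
Your existence argument is correct: if $w$ is graphically reduced but not cyclically so, then $w$ is shuffle-equivalent to $u_i w' u_i^{-1}$, so conjugating by $u_i$ strictly shortens the representative of the conjugacy class, and the process terminates. (For context: the paper states this proposition without proof; the intended route, in the spirit of Exercice~\ref{ex:WP} and of the annular diagram displayed in Figure~\ref{diag}, is via conjugacy diagrams.)

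The uniqueness argument has a genuine gap, and it sits exactly where you flag it. In the inductive step you write $g=xg'$ and analyse $x^{-1}ux$ via the dichotomy ``$x^{-1}$ cancels into $u$ from the left'' versus ``$x^{-1}$ cancels against the terminal $x$ after shuffling''. This omits the generic third case: $x^{\pm1}$ is neither shuffleable to the front nor to the back of $u$ and does not commute through $u$, so that $x^{-1}ux$ is graphically reduced of length $|u|+2$ (note that cyclic reducedness of $u$ prevents cancellation on \emph{both} sides, but not the absence of cancellation altogether). In that case the intermediate word is longer than $v$ and is not cyclically reduced, so your inductive hypothesis --- which requires two cyclically reduced words of equal length --- does not apply to the shorter conjugator $g'$, and the induction stalls. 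Ruling this case out, i.e.\ showing that between two cyclically reduced words of equal length one can always choose a conjugator (say, minimal in its coset modulo the centraliser of $u$, where Theorem~\ref{thm:Centralisers} would enter) whose letters can be peeled off one at a time producing only shuffles and cyclic permutations, is precisely the content of the proposition; a priori the length could rise at the first letter and fall again only much later, and ``a pigeonhole argument on where length is lost and regained'' is the statement to be proved, not a proof. The same circularity affects your preliminary claim that cyclically reduced words are length-minimal in their conjugacy class, which you justify only parenthetically. The clean fix within this paper's framework is the geometric one you mention and then set aside: take an annular diagram over the presentation whose two boundary cycles are labelled by $w$ and $w'$; cyclic graphical reducedness forbids a dual curve from starting and ending on the same boundary component (the annular analogue of item~3 of Exercice~\ref{ex:WP}), so after removing closed dual curves every dual curve joins the two boundaries, yielding a label-preserving bijection between the letters of $w$ and $w'$ that respects the cyclic order up to transversality of dual curves --- which is exactly ``up to shufflings and a cyclic permutation''. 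I would recommend either carrying out that diagrammatic argument or proving, as a standalone key lemma, that a length-minimal conjugator between cyclically reduced words never increases length at any prefix.
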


\begin{cor}\label{cor:ConjProb}
Let $\Gamma$ be a graph. The conjugacy problem is solvable in $A(\Gamma)$. 
\end{cor}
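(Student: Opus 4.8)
\textit{Proof proposal.} The plan is to turn Proposition~\ref{prop:ConjProb} into an effective procedure. The key observation is that a cyclically graphically reduced representative of a conjugacy class can be computed from any word, and that the equivalence ``up to shufflings and cyclic permutations'' is decidable because it involves only finitely many words of bounded length.

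First I would describe how to compute, from an arbitrary word $w$ over $V(\Gamma) \sqcup V(\Gamma)^{-1}$, a cyclically graphically reduced word conjugate to it. Apply free reductions and shufflings until the word is graphically reduced; this halts, as explained after Proposition~\ref{prop:NormalForm}, since the length never increases and there are only finitely many words of each length. Then search for a pair of positions $i<j$ with $u_i = u_j^{-1}$ such that $u_i$ can be shuffled to the beginning of the word and $u_j$ to the end. Whether a given letter can be shuffled to an end is itself decidable --- for instance by inspecting all words obtained from the current word by shufflings (a finite set, by Proposition~\ref{prop:NormalForm}), or directly from the adjacency combinatorics of $\Gamma$. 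If such a pair exists, conjugating by $u_i$ deletes both letters and strictly shortens the word; re-reduce graphically and repeat. Since the length strictly drops at each such cyclic cancellation and the interspersed graphical reductions never increase it, the process terminates, producing a word $w'$ that is graphically reduced and admits no such pair, i.e.\ is cyclically graphically reduced, with $w'$ conjugate to $w$.

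Now, given two words $v$ and $w$, compute cyclically graphically reduced conjugates $v'$ and $w'$ as above. By Proposition~\ref{prop:ConjProb}, the elements represented by $v$ and $w$ are conjugate in $A(\Gamma)$ if and only if $w'$ can be obtained from $v'$ by a sequence of shufflings and cyclic permutations. If $v'$ and $w'$ have different lengths, the uniqueness in Proposition~\ref{prop:ConjProb} forces the two elements to be non-conjugate; otherwise, writing $\ell$ for their common length, the set of words reachable from $v'$ by shufflings and cyclic permutations is contained in the finite set of words of length $\ell$ over the finite alphabet $V(\Gamma) \sqcup V(\Gamma)^{-1}$. Hence it can be enumerated explicitly, by starting from $v'$ and closing under the two moves until no new word appears. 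Then $v$ and $w$ are conjugate if and only if $w'$ occurs in this finite list, which is a decidable condition.

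The main obstacle is entirely bookkeeping: verifying that the cyclic-reduction loop genuinely terminates (it does, each cyclic cancellation strictly decreasing the length), and that the auxiliary tests --- ``is this word graphically reduced?'', ``can this letter be shuffled to an end?'', ``are these two words shuffle-equivalent?'' --- are themselves decidable, which all follow from Proposition~\ref{prop:NormalForm} together with finiteness of the alphabet. Once these routines are in place, solvability of the conjugacy problem is immediate from Proposition~\ref{prop:ConjProb}.
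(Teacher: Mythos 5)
Your proposal is correct and follows essentially the same route as the paper: algorithmically compute a cyclically graphically reduced conjugate of each input word by iterated graphical reduction and cyclic cancellation (each cancellation strictly shortening the word, so the loop terminates), then invoke the uniqueness statement of Proposition~\ref{prop:ConjProb} and decide shuffle-and-cyclic-permutation equivalence by enumerating the finitely many words of the given length. Your write-up is somewhat more explicit than the paper's about why each auxiliary test is decidable, but the underlying argument is the same.
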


\begin{proof}
Let $g_1,g_2 \in A(\Gamma)$ be two elements. For $i=1,2$, write $g_i$ as a graphically reduced word $w_i$. One can check algorithmically whether or not $w_i$ is cyclically graphically reduced. If it is not, then we can apply shufflings to $w_i$ to get a new (graphically reduced) word $u \cdot w_i' \cdot u^{-1}$ where $u$ is a generator and where $w_i'$ is a word shorter than $w_i$. Next, we iterate the process with $w_i'$, and so on. After at most $|w_i|$ iterations, we find that $w_i$ can be written as $p_i \cdot w_i'' \cdot p_i^{-1}$ by applying shufflings, where $w_i''$ is now a cyclically graphically reduced word. According to Proposition~\ref{prop:ConjProb}, $g_1$ and $g_2$ are conjugate in $A(\Gamma)$ if and only if $w_1$ and $w_2$ can be obtained from one to the other by applying shufflings and cyclic permutations, which can be checked easily. 
\end{proof}

\noindent
It is worth mentioning that, based on Proposition~\ref{prop:ConjProb}, the conjugacy problem in right-angled Artin groups can be solved very efficiently, namely in linear time \cite{MR2546582}.

\paragraph{COMMUTATION PROBLEM.} Another algorithmic problem that can be solved in right-angled Artin groups is the commutation problem, namely: given two words in generators, determine whether or not the two corresponding elements of the group commute. In order to solve this problem, we will show how to compute the centraliser of an element. In the next statement, the \emph{support} of an element $g$ of right-angled Artin group $A(\Gamma)$ refers to the subgraph of $\Gamma$ given by the generators appearing in a cyclically graphically reduced word representing $g$.

\begin{thm}[\cite{ServatiusCent}]\label{thm:Centralisers}
Let $\Gamma$ be a graph and $g \in A(\Gamma)$ an element. Decompose the support of $g$ as a join $\Lambda_1 \ast \cdots \ast \Lambda_k$ of irreducible subgraphs and write $g$ as $hw_1 \cdots w_k h^{-1}$ where $w_i \in \langle \Lambda_i \rangle$ for every $1 \leq i \leq k$. Then
$$C(g)= h \left( \langle g_1 \rangle \oplus \cdots \langle g_k \rangle \oplus \langle \mathrm{link}(\mathrm{supp}(g)) \rangle \right) h^{-1}$$
where each $g_i$ is a generator of the maximal cyclic subgroup of $\langle \Lambda_i \rangle$ containing $w_i$. 
\end{thm}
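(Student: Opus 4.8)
The plan is to prove Servatius' centralizer theorem by reducing to the cyclically reduced case and then analyzing which reduced words can commute with a given one. First I would dispose of the conjugation: since $g = h (w_1 \cdots w_k) h^{-1}$ with $w_1 \cdots w_k$ cyclically graphically reduced (its support is exactly $\mathrm{supp}(g)$), and centralizers satisfy $C(h x h^{-1}) = h\, C(x)\, h^{-1}$, it suffices to compute $C(w)$ where $w := w_1 \cdots w_k$ is cyclically graphically reduced with support $S := \Lambda_1 \ast \cdots \ast \Lambda_k$. The inclusion ``$\supseteq$'' is then routine: each $g_i \in \langle \Lambda_i \rangle$ commutes with $w_i$ (as a power of $w_i$ up to roots, they lie in the same cyclic subgroup, and distinct $w_j$'s commute with $w_i$ since $\Lambda_i \ast \Lambda_j$ is a join), and every generator in $\mathrm{link}(S)$ commutes with every generator of $S$ hence with $w$; so the displayed subgroup lies in $C(w)$. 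One should also check the displayed sum is direct — $\langle \Lambda_i \rangle \cap \langle \Lambda_j \rangle = 1$ for $i \neq j$ and $\langle \mathrm{link}(S)\rangle$ meets $\langle S \rangle$ trivially — which follows from Proposition~\ref{prop:NormalForm} by looking at supports.

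The substance is the reverse inclusion. Let $x \in C(w)$ and write $x$ as a graphically reduced word. The key structural input is that $wx$ and $xw$ represent the same element, so their graphically reduced forms agree up to shuffling (Proposition~\ref{prop:NormalForm}); comparing these expressions and tracking how letters of $w$ must ``pass through'' $x$ forces strong constraints. Concretely, I would first argue that $x$ cannot decrease the length much: using a ping-pong / normal form argument one shows $|x w| = |w| + |x|$ up to bounded cancellation, and then that $w$ cyclically reduced forces any cancellation between $x$ and $w$ to be symmetric on both sides, i.e. $x = p \cdot x_0 \cdot p^{-1}$ with no cancellation between $x_0$ and $w$. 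Replacing $x$ by $x_0$ (and noting $p$ must itself commute with $w$, so by induction on $|x|$ we can assume $x$ itself does not cancel with $w$ on either side), we are reduced to: $x$ graphically reduced, $xw = wx$, and $|xw| = |wx| = |x| + |w|$ with no cancellation. Then the only way $xw$ and $wx$ can be equal is by shuffling each letter of $x$ past $w$ and vice versa, which forces every generator in $\mathrm{supp}(x)$ to be adjacent to every generator in $\mathrm{supp}(w) = S$, i.e. $\mathrm{supp}(x) \subseteq \mathrm{star}(S)$. Hence $x \in \langle \mathrm{star}(S) \rangle = \langle S \rangle \oplus \langle \mathrm{link}(S) \rangle$ — here I use that $\mathrm{star}(S) = S \ast \mathrm{link}(S)$ as a join because every vertex of $S$ is adjacent to every vertex of $\mathrm{link}(S)$ and, crucially, every vertex not in $S$ that is adjacent to all of $S$ is in $\mathrm{link}(S)$ by definition.

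It remains to pin down the $\langle S \rangle$-component. Write $x = y \cdot z$ with $y \in \langle S \rangle$ and $z \in \langle \mathrm{link}(S)\rangle$; since $z$ already commutes with $w$, we get $y \in C(w) \cap \langle S \rangle$. Now decompose using the join $S = \Lambda_1 \ast \cdots \ast \Lambda_k$: this gives $\langle S \rangle = \langle \Lambda_1 \rangle \oplus \cdots \oplus \langle \Lambda_k \rangle$ and correspondingly $y = y_1 \cdots y_k$, $w = w_1 \cdots w_k$, so $y \in C(w)$ iff $y_i \in C_{\langle \Lambda_i \rangle}(w_i)$ for each $i$. Thus everything reduces to the case of an \emph{irreducible} support: I must show that if $\Lambda$ is irreducible (not a join) and $w_i \in \langle \Lambda \rangle$ is cyclically reduced with full support $\Lambda$, then $C_{\langle \Lambda \rangle}(w_i) = \langle g_i \rangle$ is cyclic, generated by the root of $w_i$. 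This is the heart of the matter and the step I expect to be the main obstacle: one shows that $\langle \Lambda \rangle$ with $\Lambda$ irreducible contains no $\mathbb{Z}^2$ "seen by" $w_i$, more precisely that a centralizing element $y_i$ commuting with a full-support cyclically reduced word must, by the non-cancellation analysis above applied inside $\langle \Lambda \rangle$, have support inside $\mathrm{star}_\Lambda(\Lambda) \cap \Lambda = \Lambda$ and in fact be supported on a clique-like piece; since $\Lambda$ is irreducible, $\mathrm{link}_\Lambda(\Lambda) = \emptyset$, forcing $y_i$ and $w_i$ to generate a cyclic (hence free) subgroup in which two commuting elements are powers of a common element — the desired root $g_i$. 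I would prove this last cyclicity via the standard fact that in a free group, or more generally via the normal-form periodicity argument ($y_i w_i^n$ and $w_i^n y_i$ comparison for large $n$), commuting elements with full irreducible support lie in a common cyclic subgroup; extracting the generator $g_i$ of the maximal cyclic subgroup containing $w_i$ then finishes the proof.
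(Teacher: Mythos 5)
The paper does not actually prove this theorem---it is quoted from Servatius---so your proposal can only be judged on its own merits, and there it has two genuine gaps, both located exactly where the real difficulty of the theorem lies. The first is the step where you claim that, once cancellation has been eliminated, ``the only way $xw$ and $wx$ can be equal is by shuffling each letter of $x$ past $w$,'' forcing every generator of $\mathrm{supp}(x)$ to be adjacent to every generator of $S$. This is false: take $\Gamma$ to be two non-adjacent vertices $a,b$, $w=ab$ (cyclically reduced, irreducible support), and $x=ab$. Then $xw=abab=wx$ with no cancellation and no shuffling at all, yet $a$ is not adjacent to $b$; your argument would conclude $\mathrm{supp}(x)\subseteq\mathrm{link}(S)=\emptyset$, i.e.\ $x=1$, which is absurd since here $C(w)=\langle ab\rangle$. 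The correct target, $\mathrm{supp}(x)\subseteq S\cup\mathrm{link}(S)$ together with the splitting of the $\langle S\rangle$-part along the join factors, is precisely the content that needs a careful combinatorial argument (Servatius does this by tracking, in the identity $xw=wx$, which letters of $w$ must commute past $x$ and which must be absorbed into a common ``periodic'' block); it cannot be read off from the non-cancellation normalization alone. Relatedly, your claim that the conjugating piece $p$ in $x=px_0p^{-1}$ ``must itself commute with $w$'' is not justified.

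The second gap is the irreducible case: showing that for $\Lambda$ irreducible and $w_i$ cyclically reduced with full support $\Lambda$, the centralizer of $w_i$ in $\langle\Lambda\rangle$ is the cyclic group generated by the root of $w_i$. You correctly identify this as the heart of the matter, but the argument you offer---``the standard fact that in a free group commuting elements lie in a common cyclic subgroup''---does not apply, because $A(\Lambda)$ is not free for a general irreducible $\Lambda$ (e.g.\ a path on four vertices, or a $5$-cycle, is irreducible and the corresponding group is neither free nor a direct product). Asserting that $y_i$ and $w_i$ ``generate a cyclic subgroup'' is exactly the statement to be proved, not a consequence of irreducibility of $\Lambda$. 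The periodicity idea (comparing reduced forms of $y_iw_i^n$ and $w_i^ny_i$ for large $n$ and extracting a common root) is the right direction and is essentially how Servatius proceeds, but as written it is a placeholder rather than a proof: the delicate point is that periodicity of graphically reduced words only gives a common root after one knows the supports interact correctly, which is the content of the first gap. So the overall architecture (conjugate to the cyclically reduced case, peel off $\mathrm{link}(S)$, split along the join, reduce to irreducible factors) is sound and standard, but both load-bearing steps are missing.
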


\noindent
In this statement, it is implicitly used that every non-trivial element of a right-angled Artin group belongs to a unique maximal infinite cyclic subgroup. This is a consequence of the uniqueness of roots in right-angled Artin groups.

\medskip \noindent
For instance, let $\Gamma$ be the graph containing a $4$-cycle $(a,b,c,d)$ and two vertices $e,f$ adjacent these four vertices. Let us compute the centraliser of $g:=a^{-2}cabdc^{-1}bad$ in $A(\Gamma)$. First, we write
$$g= a^{-1} \cdot a^{-1}cabdc^{-1}bd \cdot a$$
where $a^{-1}cabdc^{-1}bd$ is cyclically graphically reduced. So the support of $g$ is the $4$-cycle $(a,b,c,d)$, which splits as the join $\{a,c\} \ast \{b,d\}$. Writing
$$g= a^{-1} \cdot (a^{-1}cac^{-1}) (bd)^2 \cdot a,$$
we conclude that
$$C(g)= a^{-1} \left( \langle a^{-1}cac^{-1} \rangle \oplus \langle bd \rangle \oplus \langle e,f \rangle \right) a,$$
which is isomorphic to $\mathbb{Z}^2 \oplus \mathbb{F}_2$.

\paragraph{ISOMORPHISM PROBLEM.} Given two graphs, a natural question is to determine whether or not the two corresponding right-angled Artin groups are isomorphic. Of course, if the graphs are isomorphic, then the groups have to be isomorphic. It turns out that the converse is true.

\begin{thm}[\cite{MR891135}]
Let $\Phi,\Psi$ be two finite graphs. The right-angled Artin groups $A(\Phi),A(\Psi)$ are isomorphic if and only if the graphs $\Phi,\Psi$ are isomorphic.
\end{thm}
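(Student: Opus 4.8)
The plan is to recover the graph $\Gamma$ from the group $A(\Gamma)$ in a purely algebraic way, so that an isomorphism $A(\Phi)\simeq A(\Psi)$ forces $\Phi\simeq\Psi$. The one direction is trivial: an isomorphism $\Phi\to\Psi$ of graphs carries the presentation of $A(\Phi)$ to that of $A(\Psi)$, hence induces an isomorphism of groups. For the converse, the key is to pin down, inside $A(\Gamma)$, the conjugacy classes of the standard generators and the commutation relations among them, using only group-theoretic data. The natural invariant to exploit is the abelianization together with centralizers: $H_1(A(\Gamma))\simeq\mathbb{Z}^{V(\Gamma)}$ recovers the number of vertices, and the combinatorics of centralizers of suitable elements should recover the edges.

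The main steps I would carry out are as follows. First, I would isolate a class of elements of $A(\Gamma)$ that are "generator-like" in an intrinsic sense. A standard generator $v$ is never a proper power (uniqueness of roots), its centralizer $C(v)=\langle v\rangle\oplus\langle\mathrm{link}(v)\rangle$ is as large as possible among centralizers of non-powers with the given image in $H_1$, and $v$ maps to a basis element of $H_1(A(\Gamma))\simeq\mathbb{Z}^{V(\Gamma)}$. I would use Theorem~\ref{thm:Centralisers} to compute these centralizers and argue that, up to conjugacy and inversion, the elements whose centralizer has the maximal possible "rank" compatible with mapping to a primitive homology class are exactly the standard generators. Second, having identified the conjugacy classes of generators, I would read off adjacency: two generators $u,v$ are adjacent in $\Gamma$ if and only if (representatives of) their conjugacy classes commute, equivalently $u\in C(v)$; more robustly, $\{u,v\}\in E(\Gamma)$ iff the subgroup they generate is abelian, which by Corollary~\ref{cor:List}(ii) applied to the support is detectable. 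Third, I would assemble these two pieces: an isomorphism $\varphi\colon A(\Phi)\to A(\Psi)$ sends the canonical conjugacy classes of generators of $A(\Phi)$ to those of $A(\Psi)$, inducing a bijection $V(\Phi)\to V(\Psi)$ that, by the adjacency characterization, is a graph isomorphism. The hyperbolic-type or direct-product decompositions (Corollary~\ref{cor:List}(iii),(iv)) can be invoked to handle the reducible case and to make the "maximal centralizer" bookkeeping precise.

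The step I expect to be the main obstacle is the intrinsic characterization of the standard generators among all elements of $A(\Gamma)$: a priori there could be exotic primitive elements (for instance in a free factor, or elements like $xyx^{-1}$, or elements supported on a large clique) whose centralizers mimic those of generators, and one must rule these out. The clean way around this is to work not with individual elements but with their images in the abelianization combined with a maximality property of centralizers, and to induct on the join/free-product decomposition of $\Gamma$ furnished by Corollary~\ref{cor:List}: on each irreducible, non-complete factor one shows a generator is characterized by being a non-power with centralizer of the form $\langle g\rangle\oplus(\text{clique})$ of maximal clique-size mapping onto a primitive class, while the complete (free abelian) factors and the rank of the free part are read directly from $H_1$ and the center. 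Once the generators are characterized up to conjugacy and inversion, the remaining verification that adjacency is an isomorphism invariant is routine given Proposition~\ref{prop:NormalForm} and Theorem~\ref{thm:Centralisers}. I would also remark that, alternatively, one can bypass the explicit characterization by citing the classical argument of Droms: the graph $\Gamma$ is the "commutation graph" on a carefully chosen generating set, and any isomorphism is, after composing with inner automorphisms, a graph automorphism up to inversions of generators.
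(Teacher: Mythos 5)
The paper offers no proof of this theorem; it is stated with a citation to Droms, so there is nothing in the text to compare against and your proposal must be judged on its own. It has a genuine gap at precisely the step you flag as the main obstacle, and the proposed workaround does not close it. Your strategy needs the standard generators to form a \emph{characteristic} family: a set of conjugacy classes, preserved up to inversion by every isomorphism, detectable from being a non-proper-power, primitivity in $H_1$, and maximality of the centralizer. No such characterization exists, because $\mathrm{Aut}(A(\Gamma))$ generally contains transvections that are not inner and do not preserve the generating set. Already in $\mathbb{F}_2=\langle a,b\rangle$ the element $ab$ is not a proper power, is primitive in $H_1$, and has centralizer $\langle ab\rangle$ of exactly the same shape as $C(a)=\langle a\rangle$, yet it is conjugate to no $a^{\pm1}$ or $b^{\pm1}$. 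For $\Gamma$ the path on $a,b,c$ with $b$ adjacent to both ends, the map $a\mapsto ab$, $b\mapsto b$, $c\mapsto c$ is an automorphism and $C(ab)=\langle a,b\rangle=C(a)$ by Theorem~\ref{thm:Centralisers}, so every invariant you list fails to separate $ab$ from $a$. Inducting on the join/free-product decomposition of Corollary~\ref{cor:List} does not help: the problematic factors are exactly the irreducible and the complete ones (free groups and $\mathbb{Z}^n$ being the extreme cases), where ``primitive non-power with centralizer of maximal shape'' picks out an enormous $\mathrm{Aut}$-orbit rather than the $2|V(\Gamma)|$ elements $v^{\pm1}$. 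Your closing remark also misstates the classical argument: it is \emph{not} true that every isomorphism becomes a graph automorphism composed with inversions after adjusting by inner automorphisms; $\mathrm{Aut}(\mathbb{Z}^2)=\mathrm{GL}_2(\mathbb{Z})$ versus the finitely many ``rigid'' automorphisms of $A(\text{edge})$ already refutes this.

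A correct algebraic proof avoids identifying the generators inside the group altogether. Droms passes to the second nilpotent quotient (equivalently the degree $\leq 2$ part of the associated graded Lie ring, or dually the cup product $\Lambda^2 H^1\to H^2$): the data retained is $H_1\simeq\mathbb{Z}^{V(\Gamma)}$ together with the subgroup of $\Lambda^2\mathbb{Z}^{V(\Gamma)}$ spanned by the bivectors $u\wedge v$ for $\{u,v\}\in E(\Gamma)$, and the entire content of the theorem becomes the linear-algebra statement that if this subgroup is also spanned by basis bivectors $f_i\wedge f_j$ for some other basis $(f_i)$, then the two resulting graphs are isomorphic. That lemma is where the real work lies, and nothing in your outline substitutes for it. If you want to salvage a centralizer-based approach, you would have to replace ``the generators'' by genuinely $\mathrm{Aut}$-invariant data (for instance the poset of centralizers or of parabolic subgroups), which is a different and substantially harder argument than the one you sketch.
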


\paragraph{EMBEDDING PROBLEM.} Many other basic problems are well understood among right-angled Artin groups. One instance of a natural problem that is very far from being understood is the following:
\begin{itemize}
	\item[] \emph{Let $\Phi,\Psi$ be two finite graphs. How to determine whether or not $A(\Phi)$ is isomorphic to a subgroup of $A(\Psi)$?}
\end{itemize}
\noindent
Corollary~\ref{cor:SubGraph} show that, if $\Phi$ can be realised as an induced subgraph of $\Psi$, then $A(\Phi)$ can be realised as a subgroup of $A(\Psi)$. But the converse does not hold, even for basic examples: $\mathbb{F}_3$ (the right-angled Artin group corresponding to three isolated vertices) embeds into $\mathbb{F}_2$ (the right-angled Artin group corresponding to two isolated vertices). 

\medskip \noindent
The goal of the next lectures will be to describe a geometric approach to the Embedding Problem.

\paragraph{A MORE GENERAL FRAMEWORK.} In this mini-course, we focus on right-angled Artin groups, but it turns out that the geometric framework we present makes sense in a more general setting. Given a graph $\Gamma$ and a collection of groups $\mathcal{G}= \{G_u \mid u \in V(\Gamma)\}$ indexed by the vertices of $\Gamma$, the \emph{graph product} $\Gamma \mathcal{G}$ is the group given by the relative presentation
$$\langle G_u, \ u \in V(\Gamma) \mid [G_u,G_v]=1, \ \{u,v\} \in E(\Gamma) \rangle$$
where $[G_u,G_v]=1$ is a shorthand for $[a,b]=1$ for all $a \in G_u$, $b \in G_v$ \cite{GreenGP}. Right-angled Artin groups coincide with graph products of infinite cyclic groups, but one also recovers right-angled Coxeter groups as graph products of cyclic groups of order two. Usually, one says that graph products interpolate between free products (when $\Gamma$ has no edge) and direct sums (when $\Gamma$ is a complete graph). One can also think of graph products of groups as \emph{right-angled rotation groups} \cite{Rotation}.

\addcontentsline{toc}{section}{Exercices}
\section*{Exercices}

\begin{exercice}
Prove that right-angled Artin groups are torsion-free and that their cyclic subgroups are undistorted.
\end{exercice}

\begin{exercice}\label{ex:OtherItemOne}
Prove item $(iii)$ from Corollary~\ref{cor:List}. 

\medskip \noindent
(\emph{Hint: In a free product, free factors are malnormal and non-cyclic free abelian subgroups lie in conjugates of factors.})
\end{exercice}

\begin{exercice}\label{ex:OtherItemTwo}
Prove item $(iv)$ from Corollary~\ref{cor:List}. 

\medskip \noindent
(\emph{Hint: If $\Gamma^\mathrm{opp}$ is connected, use Theorem~\ref{thm:Centralisers} in order to construct an element with cyclic centraliser.})
\end{exercice}

\begin{exercice}\label{ex:WP}
The goal of this exercice is to prove Proposition~\ref{prop:NormalForm}. We begin by recalling some basic knowledge about van Kampen diagrams. 

\begin{definition}
Let $\mathcal{P}= \langle X \mid R \rangle$ be a group presentation, and $D$ a finite $2$-complex embedded into the plane whose edges are oriented and labelled by elements of $X$. An oriented path $\gamma$ in the one-skeleton of $D$ can be written as a concatenation $e_1^{\epsilon_1} \cdots e_n^{\epsilon_n}$, where $\epsilon_1, \ldots, \epsilon_n \in \{+1,-1\}$ and where each $e_i$ is an edge endowed with the orientation coming from $D$. Then the word \emph{labelling} $\gamma$ is $\ell_1^{\epsilon_1} \cdots \ell_n^{\epsilon_n}$ where $\ell_i$ is the label of $e_i$ for every $1 \leq i \leq n$. If, for every $2$-cell $F$ of $D$, the word labelling the boundary of $F$ (an arbitrary basepoint and an arbitrary orientation being fixed) is a cyclic permutation of a relation of $R$ or of the inverse of a relation of $R$, then $D$ is a \emph{diagram over $\mathcal{P}$}.
\end{definition}
\begin{figure}[h!]
\begin{center}
\includegraphics[trim={0 6.5cm 10cm 0},clip,scale=0.34]{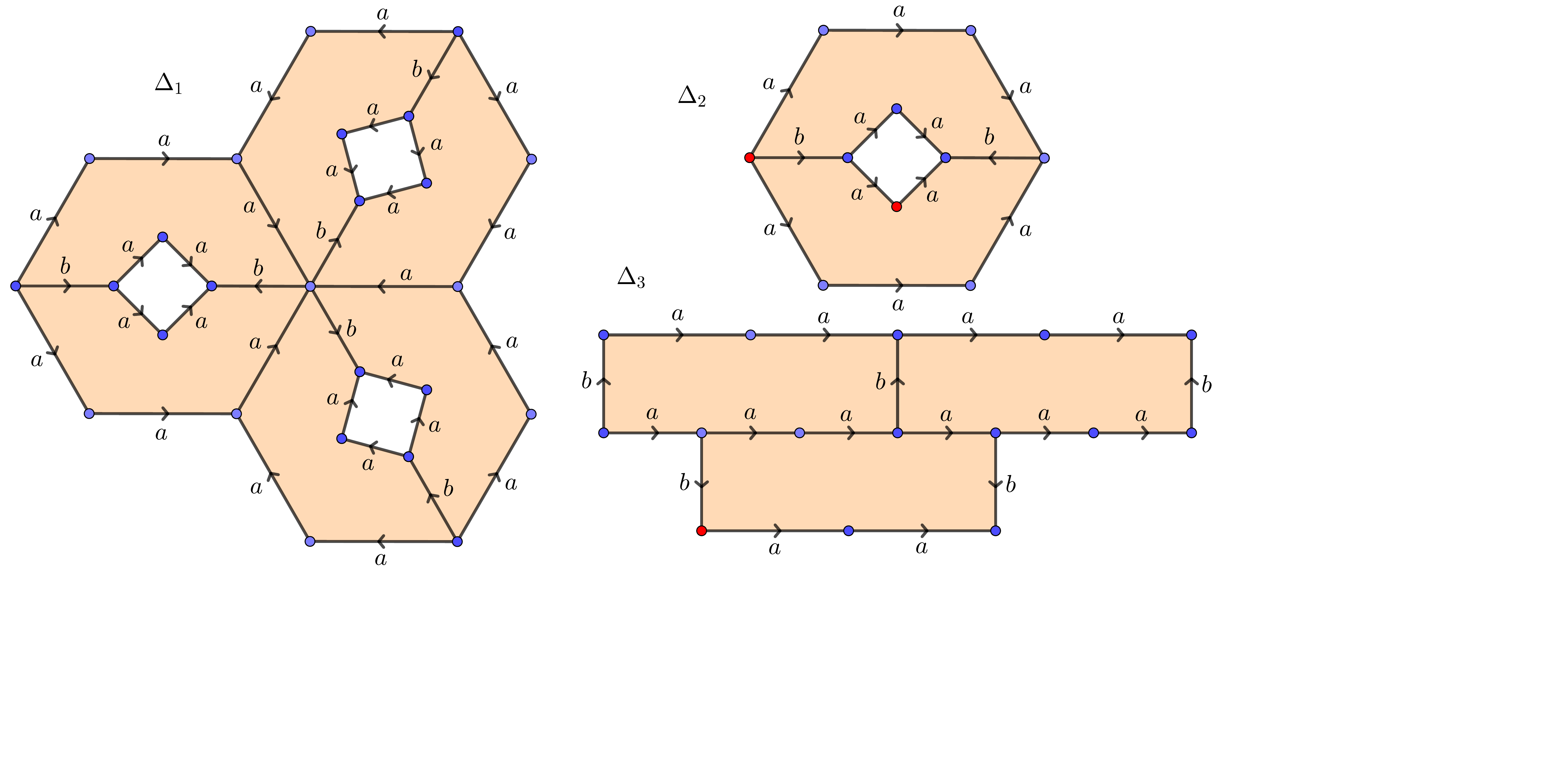}
\caption{A general diagram, a van Kampen diagram, and an annular diagram over the presentation $\langle a,b \mid ba^2b^{-1}=a^3 \rangle$.}
\label{diag}
\end{center}
\end{figure}
\begin{definition}
Let $\mathcal{P}= \langle X \mid R \rangle$ be a group presentation. A \emph{van Kampen diagram over $\mathcal{P}$} is a simply connected diagram over $\mathcal{P}$ with a fixed vertex in its boundary (ie., the intersection between $D$ and the closure of $\mathbb{R}^2 \backslash D$). A \emph{boundary cycle} of $D$ is a cycle $\alpha$ of minimal length which contains all the edges in the boundary of $D$ which does not cross itself, in the sense that, if $e$ and $e'$ are consecutive edges of $\alpha$ with $e$ ending at a vertex $v$, then $e^{-1}$ and $e'$ are adjacent in the cyclically ordered set of all edges of $D$ beginning at $v$. The \emph{label} of the boundary of $D$ is the word labelling the boundary cycle of $D$ which begins at the basepoint of $D$ and which turns around $D$ clockwise. 
\end{definition}

\noindent
The connection between van Kampen diagrams and the word problem is made explicit by the following statement. We refer to \cite[Theorem V.1.1 and Lemma V.1.2]{LS} for a proof.

\begin{prop}
Let $\mathcal{P}= \langle X \mid R \rangle$ be a presentation of a group $G$ and $w \in X^{\pm}$ a non-empty word. There exists a van Kampen diagram over $\mathcal{P}$ whose boundary is labelled by $w$ if and only if $w=1$ in $G$.
\end{prop}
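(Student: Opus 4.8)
This is the van Kampen lemma, and the plan is to prove the two implications separately: the ``only if'' direction is a short argument with fundamental groups, while the ``if'' direction requires an explicit construction.

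\emph{From a diagram to triviality in $G$.} Suppose $D$ is a van Kampen diagram over $\mathcal{P}$ with basepoint $p$ and boundary label $w$. Consider the map $\phi \colon D^{(1)} \to \bigvee_{x \in X} S^1$ from the $1$-skeleton of $D$ to the wedge of circles indexed by $X$, which wraps each edge, according to its orientation and label, once around the corresponding circle; since $\pi_1\big(\bigvee_{x \in X} S^1\big) = F(X)$, it induces a homomorphism $\phi_\ast \colon \pi_1(D^{(1)},p) \to F(X)$. By the very definitions of ``label of the boundary'' and ``diagram over $\mathcal{P}$'', the map $\phi_\ast$ sends the class of the boundary cycle of $D$ to $w$, and sends the class of the attaching loop of each $2$-cell $F$ (based at $p$ along some path) to a conjugate of a cyclic permutation of $r^{\pm 1}$, where $r \in R$ is the relator carried by $F$; all such elements lie in the normal closure $\langle\langle R\rangle\rangle$ of $R$ in $F(X)$. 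Now $D$ is simply connected, so $\pi_1(D^{(1)},p)$ coincides with its own quotient by the normal closure of the attaching loops of the $2$-cells, which forces that normal closure to be everything; in particular the class of the boundary cycle lies in it, and applying $\phi_\ast$ gives $w \in \langle\langle R\rangle\rangle$, i.e. $w = 1$ in $G$.

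\emph{From triviality in $G$ to a diagram.} Suppose $w = 1$ in $G = F(X)/\langle\langle R\rangle\rangle$. Then $w = \prod_{i=1}^{N} g_i r_i^{\epsilon_i} g_i^{-1}$ in $F(X)$ for suitable $g_i \in F(X)$, $r_i \in R$ and $\epsilon_i \in \{\pm 1\}$. First I would build an auxiliary diagram $D_0$: a bouquet, wedged at a basepoint $p$ and placed in the plane in the cyclic order $1, \ldots, N$, of $N$ ``lollipops'', the $i$-th of which consists of an arc (the stem) labelled $g_i$ whose far endpoint carries a loop bounding a single $2$-cell labelled $r_i^{\epsilon_i}$. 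Then $D_0$ is a simply connected, planar, edge-labelled $2$-complex --- a van Kampen diagram over $\mathcal{P}$ --- and reading its boundary cycle clockwise from $p$ yields precisely the word $g_1 r_1^{\epsilon_1} g_1^{-1} \cdots g_N r_N^{\epsilon_N} g_N^{-1}$, which is freely equal to $w$ but in general not the word $w$ itself.

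The main obstacle is the final step: modifying $D_0$ so that its boundary becomes literally $w$, by a folding argument. Whenever two consecutive edges of the boundary cycle carry mutually inverse labels --- the planar shadow of a free reduction on the boundary word --- I would identify them; this elementary move keeps the object a van Kampen diagram over $\mathcal{P}$ and shortens the boundary word by one free reduction, so after finitely many such moves the boundary word becomes the freely reduced form $\bar w$ of $w$. If $w = \bar w$ we are done; otherwise I would reinsert the cancelled letters as ``spikes'', i.e. dead-end pairs of edges grafted onto the boundary, converting $\bar w$ back into $w$. The only genuinely delicate point is to verify that each folding move preserves planarity, simple connectedness, and compatibility with the basepoint and the clockwise boundary convention; everything else is bookkeeping. (Of course one may instead simply cite \cite[Theorem~V.1.1 and Lemma~V.1.2]{LS}.)
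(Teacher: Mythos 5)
The paper itself gives no proof of this proposition: it is stated in the exercice and immediately deferred to \cite[Theorem V.1.1 and Lemma V.1.2]{LS}, so the only thing to compare you against is the standard argument, which is essentially what you reproduce. Your ``only if'' direction is complete and correct: mapping $D^{(1)}$ to the wedge of circles, using simple connectedness to identify $\pi_1(D^{(1)},p)$ with the normal closure of the attaching loops, and noting that a cyclic permutation of $r^{\pm 1}$ is conjugate to $r^{\pm 1}$ in $F(X)$ together give $w \in \langle\langle R \rangle\rangle$. The ``if'' direction is the standard lollipop construction, and your bouquet $D_0$ with boundary label $\prod_i g_i r_i^{\epsilon_i} g_i^{-1}$ is fine.

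The one place where the sketch under-delivers is precisely the step you call ``genuinely delicate'' and then set aside: the folding move is not always a legal operation on planar, simply connected diagrams. If $e_1$ (from $a$ to $b$) and $e_2$ (from $b$ to $c$) are consecutive boundary edges with inverse labels, identifying $e_2$ with $e_1^{-1}$ also identifies $c$ with $a$; when $c=a$ already, the path $e_1e_2$ is a closed curve bounding a subdiagram in the plane, and performing the identification would pinch that subdiagram off into a sphere, destroying planarity. This is not bookkeeping --- it is the actual content of \cite[Lemma V.1.2]{LS}, which handles it by excising the offending subdiagram (its boundary label $xx^{-1}$ is freely trivial, so nothing is lost) or, equivalently, by an induction on the number of $2$-cells. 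Your spike-reinsertion at the end is harmless. So: right approach, correct where it is carried out, but the case $c=a$ in the folding step must be treated explicitly (or the citation to \cite{LS} retained for exactly that lemma) before the second direction is a proof rather than a plan.
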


\noindent
Now, we fix a graph $\Gamma$ and the presentation $\mathcal{P}$ of $A(\Gamma)$ given by Definition~\ref{def:RAAG}. Observe that diagrams over $\mathcal{P}$ are square complexes. A \emph{dual curve} in such a diagram is minimal subset obtained by concatenating straight lines that connect midpoints of opposite edges in squares. 
\begin{enumerate}
	\item In a diagram over $\mathcal{P}$, two edges crossed by the same dual curve are labelled by the same vertex of $\Gamma$. Thus, dual curves are naturally labelled by vertices of $\Gamma$.
	\item If two dual curves cross, then they are labelled by adjacent vertices of $\Gamma$. As a consequence, a dual curve cannot self-intersect.
	\item In a diagram over $\mathcal{P}$, a path labelled by a graphically reduced word cannot cross a dual curve twice. 
	\item Deduce that graphically reduced words coincide with words of minimal length.
	\item Prove that two graphically reduced words representing the same element of $A(\Gamma)$ differ only by finitely many shufflings.
\end{enumerate}
\end{exercice}

\newpage

\section{Lecture 2: A crash course on (quasi-)median graphs}

\noindent
In this lecture, we think of a graph as a set of vertices endowed with an adjacency relation. As a consequence, a point in a graph always refers to a vertex. In other words, we distinguish an abstract graph from its geometric realisation, so points on edges do not exist for us. Equivalently, a graph can be thought of as a metric space whose metric takes values in $\mathbb{N} \cup \{+\infty\}$ and such that any two finite points at finite distance can be connected by a sequence of points that are successively at distance one.

\medskip \noindent
Given a graph $X$, a subgraph $Y$ is a graph whose vertex- and edge-sets are included in the vertex- and edge-sets of $X$. We distinguish three types of subgraphs:
\begin{itemize}
	\item $Y$ is \emph{induced} if any two vertices in $Y$ that are adjacent in $X$ are also adjacent in~$Y$;
	\item $Y$ is \emph{convex} if any geodesic between any two vertices in $Y$ lies in $Y$;
	\item $Y$ is \emph{gated} if, for every $x \in X$, there exists some $z \in Y$ (referred to as the \emph{gate} or \emph{projection}) that belongs to at least one geodesic from $x$ to $y$ for every $y \in Y$.
\end{itemize}
\begin{minipage}{0.49\linewidth}
\begin{center}
\includegraphics[width=0.6\linewidth]{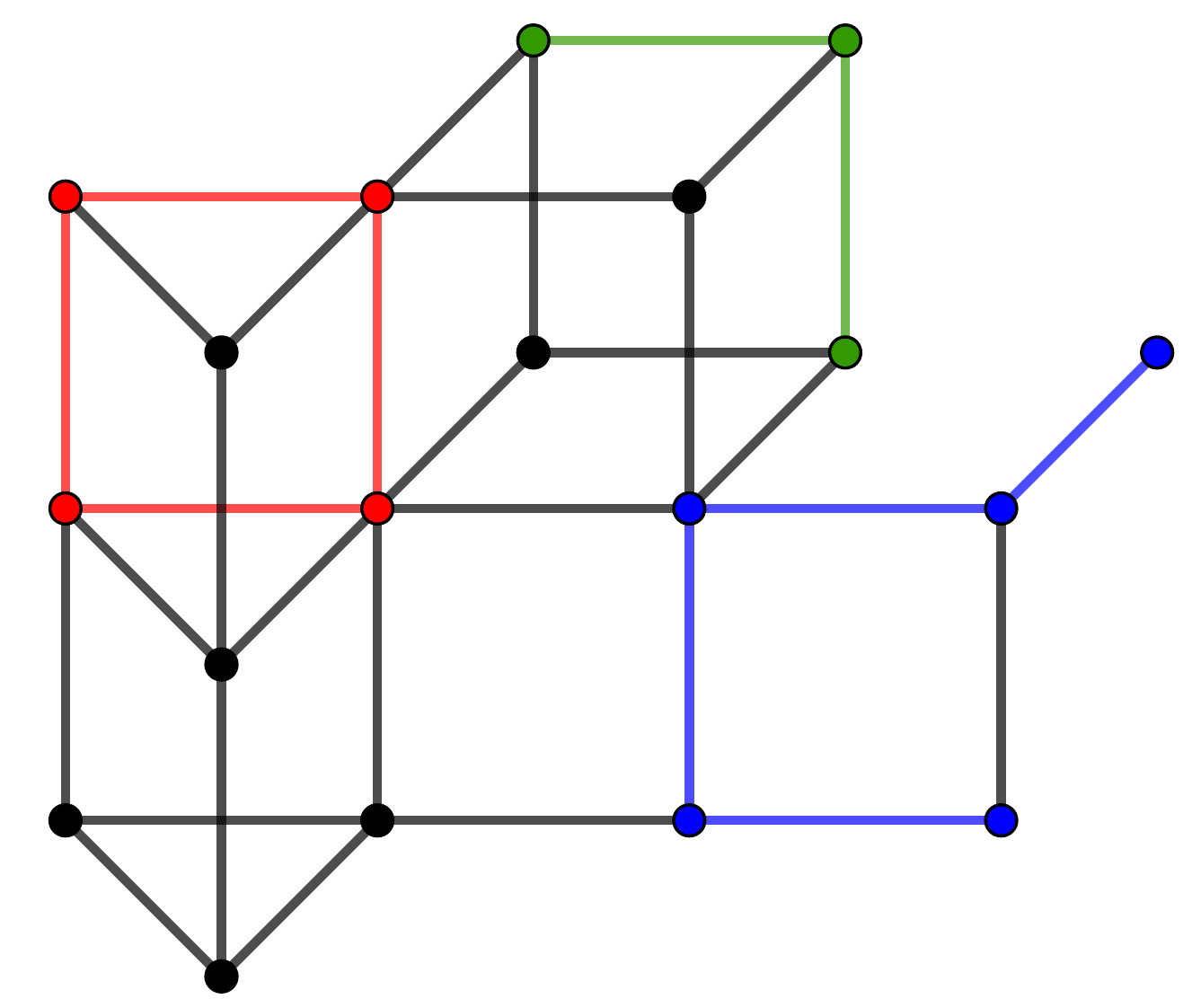}
\end{center}
\end{minipage}
\begin{minipage}{0.4\linewidth}
In blue, a subgraph that is not induced; in green, a subgraph that is induced but not convex; in red, a subgraph that is convex but not gated.
\end{minipage}

\medskip \noindent
Observe that a gated subgraph is automatically convex, and that the projection of a vertex $x$ coincides with the unique vertex in $Y$ minimising the distance from $x$. Consequently, gatedness can be thought of as a strong convexity condition.

\begin{definition}
A connected graph $X$ is \emph{median} if and only if, for any three vertices $x_1,x_2,x_3 \in X$, there exists a unique vertex $m \in X$ (referred to as the \emph{median point}) that satisfies
$$d(x_i,x_j) = d(x_i,m)+d(m,x_j), \ \forall i \neq j.$$
\end{definition}

\noindent
The typical example to keep in mind is that of a tree. But many other graphs, quite different from trees, are median.
\begin{center}
\includegraphics[width=\linewidth]{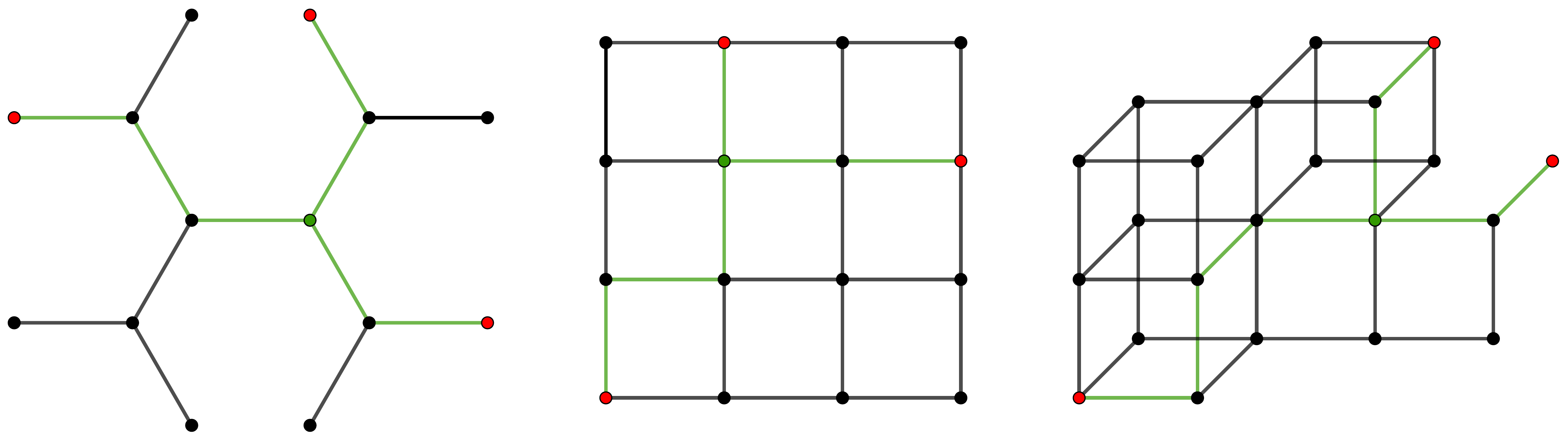}
\end{center}
It turns out that median graphs coincide with one-skeleta of \emph{CAT(0) cube complexes}. Roughly speaking, median graphs can be thought of as one-skeleta of nonpositively curved cellular complexes made of cubes of arbitrary dimension.

\medskip \noindent
Typical counterexamples of median graphs are given by a $3$-cycle (whose three vertices do not admit a median point) and by bipartite complete graph $K_{3,2}$ (whose three pairwise non-adjacent vertices admit two median points). Interestingly, it is possible to define a larger family of graphs that are allowed to contain $3$-cycles and that have essentially the same geometry as median graphs.

\begin{definition}
Let $X$ be a connected graph. Given three vertices $x_1,x_2,x_3 \in X$, a \emph{median triangle} is the data of three vertices $y_1,y_2,y_3 \in X$ such that
$$d(x_i,x_j)=d(x_i,y_i)+d(y_i,y_j)+d(y_j,x_j), \ \forall i \neq j.$$
Its \emph{size} if $d(y_1,y_2)+d(y_2,y_3)+d(y_3,y_1)$. The graph $X$ is \emph{quasi-median} if any three vertices admit a unique median triangle of minimal size and if the gated hull of any such median triangle is a product of complete graphs.
\end{definition}
\begin{center}
\includegraphics[width=\linewidth]{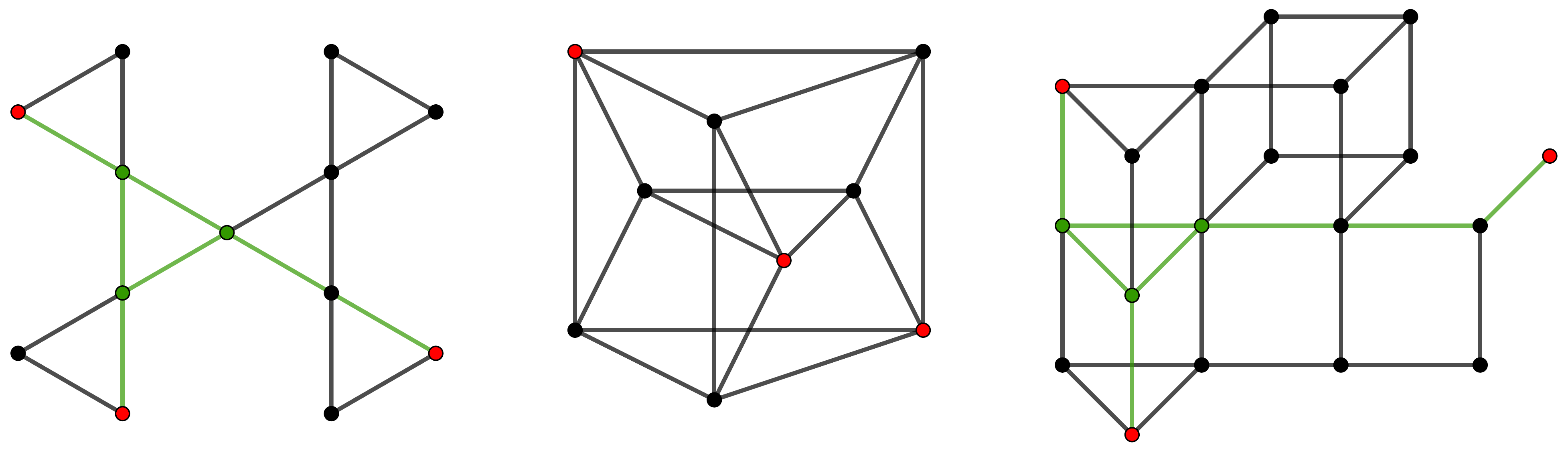}
\end{center}
\noindent
Observe that, in an arbitrary graph $X$, three vertices $x_1,x_2,x_3 \in X$ always admit a median triangle, namely $x_1,x_2,x_3$. What is interesting is to find a median triangle of small size. Quasi-median graphs are defined by requiring the smallest median triangle to be unique and to be as small as possible in a graph containing $3$-cycles. (Observe that a product $P$ of complete graphs is a quasi-median graph, and that it contains three vertices $a,b,c$ whose median triangle is $a,b,c$ and whose gated hull is the whole graph~$P$.)

\medskip \noindent
The graph $K_4^-$, namely the graph obtained from the complete graph $K_4$ by removing an edge (which amounts to saying that $K_4^-$ is obtained from by gluing two $3$-cycles along an edge), is an example of a graph that is not quasi-median. Indeed, if $a,b,c \in K_4^-$ are the three vertices of a $3$-cycle, then $a,b,c$ is the corresponding median triangle and its gated hull is the whole $K_4^-$. But $K_4^-$ is not a product of complete graphs, so it cannot be quasi-median. Cycles of length $\geq 5$ are other examples of graphs that are not quasi-median. 

\medskip \noindent
The idea that quasi-median graphs are ``median graphs with $3$-cycles'' is motivated by Exercice~\ref{ex:MvsQM}. Also, in the same way that median graphs are one-skeleta of CAT(0) cube complexes, quasi-median graphs are one-skeleta of CAT(0) prism complexes (a prism referring to a product of simplices).

\medskip \noindent
Median and quasi-median graphs are both share a common property: a parallelism relation on edges allows us to define classes of edges, namely \emph{hyperplanes}, that separate our graph and that encodes, in some sense, its geometry. 

\begin{definition}
In a quasi-median graph, a \emph{hyperplane} is an equivalence class of edges with respect to the transitive closure of the relation that identifies any two edges that belong to the same $3$-cycle or that are opposite sides of a $4$-cycle. Two hyperplanes are \emph{transverse} if they contain two distinct pairs of opposite sides in some $4$-cycle.
\end{definition}
\begin{center}
\includegraphics[trim={0cm 16cm 10cm 0cm},clip,width=0.5\linewidth]{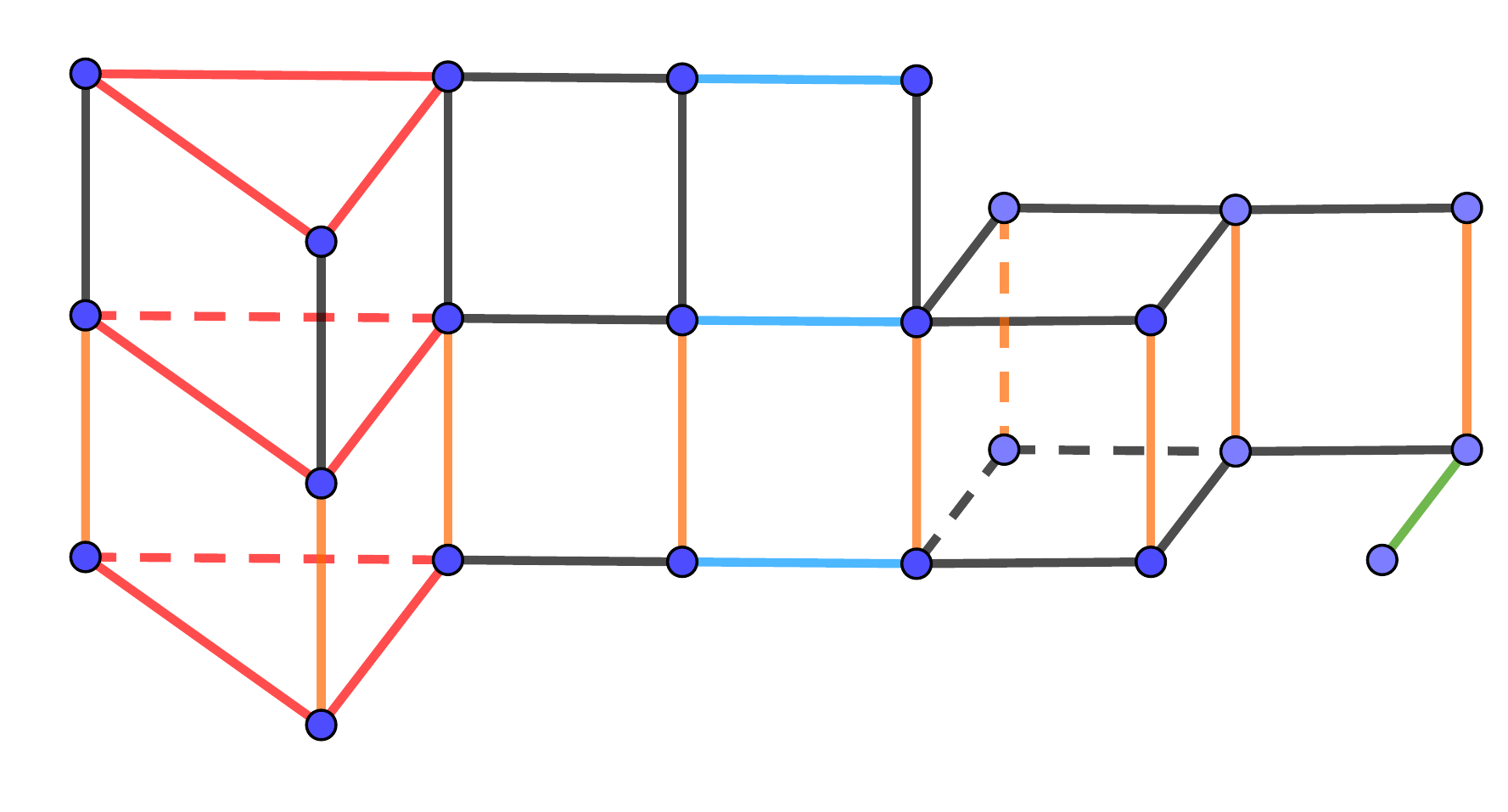}
\end{center}
\noindent
The idea that the combinatorics of the hyperplanes encodes the geometry of a quasi-median graphs is motivated by the following statement:

\begin{thm}[\cite{QM}]
Let $X$ be a quasi-median graph. 
\begin{itemize}
	\item For every hyperplane $J$, the graph $X \backslash \backslash J$ obtained from $X$ by removing the edges in $J$ has at least two connected components, referred to as \emph{sectors}.
	\item Sectors are gated.
	\item A path in $X$ is a geodesic if and only it crosses each hyperplane at most once. As a consequence, the distance between any two vertices coincides with the number of hyperplanes separating them.
\end{itemize}
\end{thm}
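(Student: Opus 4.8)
The plan is to establish the three bullet points in the order they are stated, since each relies on the previous one, and to build everything on the parallelism relation defining hyperplanes together with the structure of median triangles guaranteed by the definition of quasi-median graphs. The conceptual heart of the argument is the same as in the median/CAT(0) cube complex setting, adapted to allow $3$-cycles: a hyperplane should be thought of as a ``wall'' and the key technical fact is that geodesics cross each hyperplane exactly once, so I would actually prove the three items in an interleaved fashion rather than strictly sequentially.

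First I would set up the basic combinatorics of hyperplanes. Fix a hyperplane $J$ and consider the graph $X \backslash\backslash J$. The first step is a local convexity/no-backtrack lemma: if a path $\gamma$ in $X$ starts and ends on the same side of $J$ (i.e.\ in the same component of $X\backslash\backslash J$), then either $\gamma$ does not cross $J$ at all, or $\gamma$ can be shortened. To prove this I would take a minimal-length counterexample and look at the two edges of $\gamma$ lying in $J$ that are ``innermost''; using the definition of the parallelism relation (edges in a common $3$-cycle, or opposite edges of a $4$-cycle) together with the hypothesis that the gated hull of a minimal median triangle is a product of complete graphs, I would produce a shortcut, exactly as one does with squares in a CAT(0) cube complex but now also handling triangles. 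This simultaneously gives: (a) $X\backslash\backslash J$ is disconnected (a single edge of $J$ has distinct endpoints which cannot be joined without re-crossing $J$, hence lie in different components — this is the first bullet, with sectors being the components); and (b) any geodesic crosses $J$ at most once (the ``only if'' direction of the third bullet is the contrapositive of the shortening lemma).

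Next I would prove that sectors are gated, which is the second bullet. Let $S$ be a sector of $X\backslash\backslash J$ and let $x\in X$. I would take a vertex $z\in S$ nearest to $x$ and show it is a gate: given any $y\in S$, I must show $z$ lies on some geodesic from $x$ to $y$, equivalently $d(x,y)=d(x,z)+d(z,y)$. The inequality $\leq$ is the triangle inequality; for $\geq$ I would use the hyperplane count. Concretely I would argue that the set of hyperplanes separating $x$ from $z$ is contained in the set separating $x$ from $y$: if a hyperplane $H$ separates $x$ from $z$ but not $x$ from $y$, then $H\neq J$ (since $y,z\in S$ are on the same side of $J$, while $x$... needs care if $x\in S$ too — in that case $J$ separates neither, fine), and then one can use $H$ to move $z$ closer to $x$ while staying in $S$, contradicting minimality, invoking gatedness of the sectors of $H$ (an inductive/bootstrapping point) or, more self-containedly, the shortening lemma applied to $H$. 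Once sectors are gated, the third bullet is completed: a path crossing every hyperplane at most once has length equal to the number of hyperplanes it crosses, which is a lower bound for the number of hyperplanes separating its endpoints; combined with the observation that two vertices at distance $n$ are separated by at least $n$ hyperplanes (each edge of any path contributes a hyperplane, and distinct hyperplanes on a geodesic), one gets that $d(x,y)$ equals the number of separating hyperplanes and every such path is a geodesic.

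The main obstacle I anticipate is the shortening lemma in the presence of $3$-cycles: in the purely median case the only local picture is a square, and Sageev-style arguments are clean, but here an innermost pair of $J$-edges might sit in a $3$-cycle, or the two hyperplanes one wants to ``push past'' each other might be transverse via a prism rather than a mere square. This is exactly the point where the hypothesis ``the gated hull of a minimal median triangle is a product of complete graphs'' must be used in an essential way — it is what replaces the cube condition and guarantees that the relevant local configurations are products of complete graphs (prisms), in which the combinatorics of triangles-and-squares is controlled. I would expect to need a preliminary lemma describing triangles and squares adjacent to a geodesic, proved by considering median triangles of appropriate triples of vertices, before the shortening argument goes through cleanly; everything after that (gatedness of sectors, the distance formula) is then a routine hyperplane-counting argument.
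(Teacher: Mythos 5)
The paper does not actually prove this theorem: it is quoted from \cite{QM}, where the argument is structural rather than Sageev-style. There one first shows that cliques are gated, that any two cliques dual to the same hyperplane $J$ are parallel (the gate maps between them are isomorphisms), and that the carrier of $J$ decomposes as a product of a ``fiber'' with a clique dual to $J$; the sectors are then exhibited as the preimages of the vertices of that clique under the gate map, and their gatedness and the distance formula follow. Your wall-and-shortening plan is a legitimate alternative route, but as written it has two genuine gaps. The first is a circularity in the first bullet: your shortening lemma is conditioned on the path ``starting and ending in the same component of $X \backslash\backslash J$,'' and its conclusion is ``either the path avoids $J$ or it can be shortened.'' To prove disconnectedness you must rule out a path joining the two endpoints of an edge of $J$ that uses no edge of $J$ at all --- but such a path satisfies your lemma's conclusion vacuously, so no contradiction appears. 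The statement you actually need is the complementary one, that \emph{every} path between the endpoints of an edge of $J$ must use an edge of $J$; that is where the real work lies, and it does not follow formally from the lemma as you state it.

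The second gap is in the gatedness step, where you rely on two-sided wall combinatorics that fail for quasi-median hyperplanes. A hyperplane here can bound three or more sectors, so a single hyperplane may separate $x$ from $z$, $z$ from $y$, \emph{and} $x$ from $y$ simultaneously --- already inside one $3$-cycle. Hence the set of hyperplanes separating $x$ from $y$ is not the symmetric difference of those separating $x$ from $z$ and $z$ from $y$, and the inclusion you aim for (``every hyperplane separating $x$ from $z$ separates $x$ from $y$'') does \emph{not} imply $d(x,y)=d(x,z)+d(z,y)$: in a single $3$-cycle $\{p,q,r\}$ with one hyperplane, the separating sets for $(p,q)$ and $(p,r)$ coincide, yet $q$ is not between $p$ and $r$. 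The correct betweenness criterion is that \emph{no} hyperplane separates both $x$ from $z$ and $z$ from $y$, and verifying it for the nearest point $z$ of a sector forces explicit use of the triangle and quadrangle conditions (weak modularity), not just hyperplane counting. You do correctly locate where the ``gated hull of a minimal median triangle is a product of complete graphs'' hypothesis must enter, but until the shortening lemma is actually carried out and the multi-sector bookkeeping is repaired, the proposal remains a plausible plan rather than a proof.
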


\medskip \noindent
It turns out that examples of median and quasi-median graphs naturally include Cayley graphs of right-angled Artin groups. Such median graphs are extremely useful in the study of right-angled Artin groups for many problems, but quasi-median graphs will play the central role for the Embedding Problem. 

\begin{prop}
Let $\Gamma$ be a graph. Then $\mathrm{M}(\Gamma):= \mathrm{Cayl}(A(\Gamma),V(\Gamma))$ is a median graph and $\mathrm{QM}(\Gamma):= \mathrm{Cayl}(A(\Gamma), \{u^k \mid u \in V(\Gamma), k \in \mathbb{Z}_{\neq 0} \} )$ a quasi-median graph.
\end{prop}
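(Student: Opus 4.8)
The plan is to prove each claim separately, relying in both cases on the normal form given by Proposition~\ref{prop:NormalForm}.

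\textbf{The median case.} First I would show $\mathrm{M}(\Gamma) = \mathrm{Cayl}(A(\Gamma), V(\Gamma))$ is median. Fix three elements $g_1, g_2, g_3 \in A(\Gamma)$; translating by $g_1^{-1}$ we may assume $g_1 = 1$. The candidate median point $m$ is the element whose support is built from the letters that, up to shuffling, appear ``in the same way'' in at least two of the three chosen reduced words; concretely, I would describe $m$ by writing $g_2$ and $g_3$ as graphically reduced words and taking the ``longest common prefix up to shuffling'', and more symmetrically define $m$ via the intersection pattern of the hyperplanes separating the three points. The cleanest route is via hyperplanes: an edge of $\mathrm{M}(\Gamma)$ labelled by $u \in V(\Gamma)$ at position $g$ corresponds to multiplying on the right by $u^{\pm 1}$, and by point~3 of Exercice~\ref{ex:WP} a path labelled by a graphically reduced word crosses each dual curve (hyperplane) at most once, so $d(g,h)$ equals the number of hyperplanes separating $g$ from $h$. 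Then, as in any wall space, the median point of $g_1, g_2, g_3$ is the unique vertex on the correct side of every hyperplane (the side containing at least two of the three vertices), provided such a vertex exists; existence follows because the relevant ``majority'' half-spaces have the finite Helly-type intersection property, which here reduces to the combinatorics of reduced words. I would verify the defining metric equality $d(g_i,g_j) = d(g_i,m) + d(m,g_j)$ by checking that every hyperplane separating $g_i$ from $g_j$ separates exactly one of them from $m$, and uniqueness from the fact that two distinct vertices are separated by at least one hyperplane.

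\textbf{The quasi-median case.} For $\QM = \mathrm{Cayl}(A(\Gamma), \{u^k \mid u \in V(\Gamma), k \neq 0\})$, the strategy is the same but the local structure is richer: around each vertex $g$, the generators $\{gu^k : k \neq 0\}$ for a fixed $u$ span a complete graph (all powers of $u$ are pairwise at distance one), and commuting generators give products of such complete graphs, so the $3$-cycles and prisms are exactly where the quasi-median axioms need them. Again reduce to $g_1 = 1$. Given $g_2, g_3$, write them as words that are reduced with respect to this larger generating set — i.e. graphically reduced words in $V(\Gamma)^{\pm}$ grouped into maximal blocks of a single generator — and define the median triangle $(y_1, y_2, y_3)$ by splitting off the common part: $y_1$ is the maximal ``shuffled common initial segment'' of $g_2$ and $g_3$, while $y_2, y_3$ record where $g_2$ and $g_3$ first genuinely diverge, including the case where they diverge only in the exponent of a common generator $u$ (this is precisely what produces a nontrivial triangle of size up to $3$ sitting inside a complete graph, rather than a single median point). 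I would then check: (a) this is a median triangle in the metric sense, using that distance counts hyperplanes and point~3 of Exercice~\ref{ex:WP}; (b) it has minimal size and is unique, by arguing any smaller or different median triangle would contradict the near-uniqueness of graphically reduced words in Proposition~\ref{prop:NormalForm}; and (c) its gated hull is a product of complete graphs — namely the sub-Cayley-graph on the coset $y_1 \langle \Lambda_1 \rangle \cdots \langle \Lambda_k \rangle$ where $\Lambda_1 \ast \cdots \ast \Lambda_k$ is the join decomposition of the support of $y_1^{-1} y_2 \cdot y_1^{-1} y_3$ restricted to the divergent generators, each $\langle \Lambda_i \rangle$-factor contributing a complete graph because within a single generator all nonzero powers are mutually adjacent.

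\textbf{Main obstacle.} The routine-but-fiddly part is bookkeeping with graphically reduced words: making precise the notion of ``shuffled common prefix'' and showing the candidate median point / median triangle is well-defined independently of which reduced representatives one picks — this is exactly where the ``unique up to shufflings'' clause of Proposition~\ref{prop:NormalForm} is doing the work, and it must be invoked carefully. The genuinely conceptual obstacle is verifying that the gated hull of the minimal median triangle is a \emph{product of complete graphs} and not merely a product of trees: one must argue that no hyperplane of $\QM$ separates two powers $gu^j, gu^k$ of the same generator, equivalently that the ``sector'' structure collapses all powers of a fixed generator into one complete graph, which is special to the enlarged generating set and is what fails for $\mathrm{M}(\Gamma)$ (where the analogous statement is just that $\mathrm{M}(\Gamma)$ is median, i.e. all prisms are cubes). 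I would isolate this as a lemma comparing $\QM$ and $\mathrm{M}(\Gamma)$: collapsing each complete graph $\{gu^k\}$ to a point (or rather, the quotient identifying $gu^j \sim gu^k$) recovers $\mathrm{M}(\Gamma)$-like structure, and conversely $\QM$ is obtained from $\mathrm{M}(\Gamma)$ by ``blowing up'' each hyperplane, a construction known to preserve the quasi-median property.
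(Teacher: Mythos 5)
Your proposal is correct and follows essentially the same route as the paper's (sketched) proof: after translating one point to $1$, take the longest common prefix of graphically reduced words as the median point in $\mathrm{M}(\Gamma)$, and for $\mathrm{QM}(\Gamma)$ augment it by the first letters on which the two words diverge (possibly only in the exponent of a common generator) to get the minimal median triangle, whose gated hull is a coset of a product of cyclic subgroups and hence a product of complete graphs. The extra hyperplane-counting framework you invoke for the verifications is consistent with the paper's exercises and does not change the argument in substance.
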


\begin{proof}[Idea of proof.]
One needs to show that, given three elements of $A(\Gamma)$, there exist a unique median point in $\mathrm{M}(\Gamma)$ and a unique minimal median triangle in $\mathrm{QM}(\Gamma)$. Up to translating by an element of $A(\Gamma)$, we can assume without loss of generality that $1$ is one of these elements. As an illustration, assume that $\Gamma$ is a $5$-cycle $(a,b,c,d,e)$ and that our three elements are $1$, $g:= ab^2ce$, $h:=ab^{-1}c^{-2}a$. 

\medskip \noindent
The median point $m$ of $1,g,h$ in $\mathrm{M}(\Gamma)$, if it exists, has to belong to geodesics from $1$ to $g,h$.  In other words, in terms of graphically reduced words, $m$ must be a common prefix of $g$ and $h$. So we define $m$ as the longest common prefix of $g$ and $h$, and we claim that it defines a median point. In our example, $m=a$. In order to justify that $a$ is indeed a median point, we need to show that $a$ lies on a geodesic between $g$ and $h$, which amounts to saying that $ (b^2ce)^{-1} \cdot b^{-1}c^{-2}a$ has minimal length (with respect to the generating set $V(\Gamma)$). One can check the word is graphically reduced.
\begin{center}
\includegraphics[width=\linewidth]{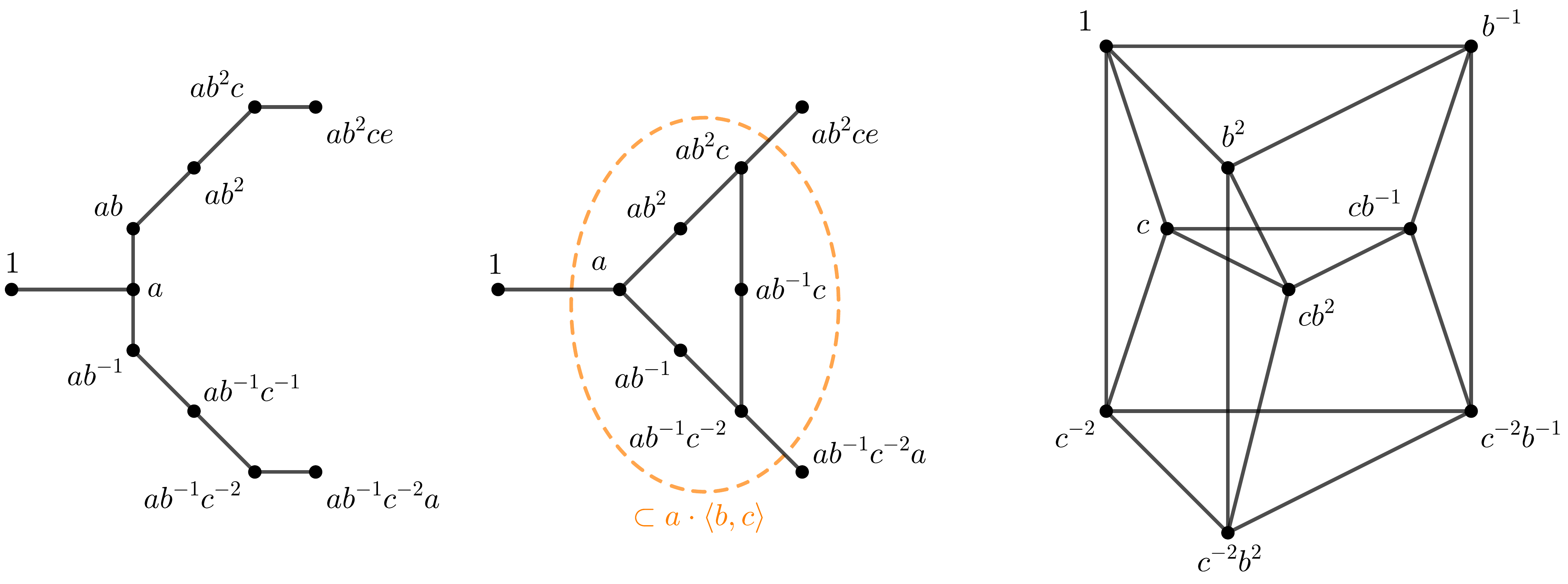}
\end{center}
\noindent
However, the word does not have minimal length with respect to $\{u^k \mid u \in V(\Gamma), k \in \mathbb{Z}_{\neq 0} \}$. Geometrically, observe that the vertices $ab^2$ and $ab^{-1}$ are adjacent in $\mathrm{QM}(\Gamma)$. The longest common prefix $a$ gives one of the three vertices of our median triangle. The two other vertices are obtained as follows. In addition to the longest common prefix, we add the letters that arise as first letters in $m^{-1}g$ and $m^{-1}h$ with different powers. In our examples, we find $ab^2c$ and $ab^{-1}c^{-2}$. Observe that these additional letters are given by vertices of $\Gamma$ that are pairwise adjacent. As a consequence, the vertices $a$, $ab^2c$, $ab^{-1}c^{-2}$ lie in $a \cdot \langle b,c \rangle = a \cdot \langle b \rangle \oplus \langle c \rangle$. One can check that $a \langle b,c \rangle$ coincides with the gated hull of our three vertices, and it is clearly a product of complete graphs. Moreover, $ab^2c$ and $ab^{-1}c^{-2}$ lies on a geodesic from $ab^2ce$ to $ab^{-1}c^{-2}a$ (in that order), which amounts to saying that the word $(ce)^{-1} \cdot a$ has minimal length with respect to our generating~set. 
\end{proof}

\medskip \noindent
Let us conclude by recording some information on the structure of the quasi-median graph $\mathrm{QM}(\Gamma)$. Recall that a \emph{clique} is a maximal complete subgraph; a \emph{prism} is a subgraph that splits as a product of cliques; and the \emph{star} of a vertex is the subgraph given by the vertex under consideration together with all its neighbours.

\begin{lemma}\label{lem:QMGamma}
Let $\Gamma$ be a graph. The following assertions hold.
\begin{itemize}
	\item The cliques of $\mathrm{QM}(\Gamma)$ coincides with the cosets $g \langle u \rangle$ where $g \in A(\Gamma)$ and $u \in V(\Gamma)$.
	\item The prisms of $\mathrm{QM}(\Gamma)$ coincides with the cosets $g \langle \Lambda \rangle$ where $g \in A(\Gamma)$ and where $\Lambda \leq \Gamma$ is complete.
	\item For every $u \in V(\Gamma)$, let $J_u$ denote the hyperplane containing the clique $\langle u \rangle$. The hyperplanes of $\mathrm{QM}(\Gamma)$ are the $gJ_u$ where $g \in A(\Gamma)$ and $u \in V(\Gamma)$, with $g \langle \mathrm{star}(u) \rangle g^{-1}$ as stabiliser. 
\end{itemize}
\end{lemma}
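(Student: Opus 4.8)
\textbf{Proof plan for Lemma~\ref{lem:QMGamma}.}

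The plan is to proceed through the three assertions in order, since each builds on the previous one, and to reduce everything to the normal-form machinery of Proposition~\ref{prop:NormalForm} together with the combinatorial description of the generating set $S:=\{u^k \mid u \in V(\Gamma), k \in \mathbb{Z}_{\neq 0}\}$ used to build $\mathrm{QM}(\Gamma)$. First I would unwind what the edges of $\mathrm{QM}(\Gamma)$ are: an edge joins $g$ to $gu^k$ for $g \in A(\Gamma)$, $u \in V(\Gamma)$, $k \neq 0$. So for a fixed vertex $u$, the set of vertices of the form $g u^k$, $k \in \mathbb{Z}$, forms a complete subgraph (any two of them differ by a power of $u$, hence are adjacent), i.e.\ the coset $g\langle u\rangle$ is a complete subgraph. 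For the first bullet I would then argue maximality: suppose $gu^k$ and $gu^\ell$ (WLOG $g=1$ by left translation, and WLOG looking at the vertex $1$) both lie in a clique $C$ containing a third vertex $w$; then $w$ is adjacent to $1$, so $w = v^m$ for some $v \in V(\Gamma)$, $m \neq 0$, and $w$ is adjacent to $u^k$, so $u^{-k}v^m$ must be a single letter of $S$, which by the normal form (Proposition~\ref{prop:NormalForm}) forces $v = u$. Hence $C \subseteq \langle u\rangle$, proving cliques are exactly the cosets $g\langle u\rangle$.

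For the second bullet, the inclusion "$g\langle\Lambda\rangle$ is a prism when $\Lambda$ is complete" is straightforward: $\langle\Lambda\rangle \cong \bigoplus_{v \in V(\Lambda)}\langle v\rangle$ by definition of a RAAG on a complete graph, and one checks this direct-product decomposition is realised as a product of cliques inside $\mathrm{QM}(\Gamma)$ — the Cayley graph of a direct product with respect to the union of the generating sets is the Cartesian product of the Cayley graphs. The reverse inclusion is the substantive point: if $P$ is a subgraph of $\mathrm{QM}(\Gamma)$ that splits as a product of cliques, I would use the first bullet to write each factor clique (after translating so that $1 \in P$) as a coset of some $\langle u_i\rangle$; then the product structure means the various $u_i$ pairwise commute with arbitrary powers, which via the normal form forces the $u_i$ to be pairwise adjacent in $\Gamma$, so they span a complete subgraph $\Lambda$, and $P = \langle\Lambda\rangle$. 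The main subtlety here is checking that a product of cliques through $1$ is genuinely contained in, and equal to, the corresponding coset $\langle\Lambda\rangle$ rather than something larger — this uses that graphically reduced words over $V(\Lambda)$ with $\Lambda$ complete are exactly the words sorted by the direct-sum structure, which is again Proposition~\ref{prop:NormalForm}.

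For the third bullet I would invoke the definition of a hyperplane as the parallelism class of an edge under the transitive closure of "same $3$-cycle or opposite sides of a $4$-cycle." Since cliques correspond to cosets $g\langle u\rangle$ and all edges inside such a clique are mutually parallel (they lie in common $3$-cycles when the clique has $\geq 3$ vertices), each clique lies in a single hyperplane; conversely two edges labelled by distinct vertices $u \neq v$ of $\Gamma$ can only be made parallel through $4$-cycles, and a $4$-cycle in $\mathrm{QM}(\Gamma)$ with sides labelled $u,v$ witnesses that $u,v$ commute, i.e.\ are adjacent — so the hyperplane $J_u$ containing $\langle u\rangle$ meets only edges "labelled $u$," and by homogeneity every hyperplane is a translate $gJ_u$. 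For the stabiliser, an element $h \in A(\Gamma)$ stabilises $J_u$ iff it permutes the cliques of the form $k\langle u\rangle$ lying in $J_u$; I would identify $J_u$ concretely as the set of edges inside the coset $\langle\mathrm{star}(u)\rangle$ that move in the $u$-direction (since $\langle\mathrm{star}(u)\rangle = \langle u\rangle \oplus \langle\mathrm{link}(u)\rangle$, this coset is a prism with one $\langle u\rangle$-factor, and all its $u$-edges are parallel), check that no $u$-edge outside this coset is parallel to one inside it (again via the normal form, separating along the hyperplane), and conclude the stabiliser of $J_u$ is exactly $\langle\mathrm{star}(u)\rangle$; the general case follows by conjugation, giving $g\langle\mathrm{star}(u)\rangle g^{-1}$.

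The step I expect to be the main obstacle is the reverse inclusion in the second bullet — showing every prism of $\mathrm{QM}(\Gamma)$ is a coset $g\langle\Lambda\rangle$ with $\Lambda$ complete — because it requires not just recognising individual cliques but controlling how several cliques fit together into a product subgraph, and ruling out "accidental" product structures; this is where one really leans on the precise statement that graphically reduced words are unique up to shuffles and on the hyperplane geometry (a product of cliques through $1$ must have its factor-cliques crossing pairwise distinct, pairwise transverse hyperplanes, and transversality of $J_{u_i}$ and $J_{u_j}$ forces $u_i \sim u_j$ in $\Gamma$).
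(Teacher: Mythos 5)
Your proposal is correct and follows essentially the same route as the paper, which likewise reduces everything to the normal form of Proposition~\ref{prop:NormalForm} via the characterisation of $3$-cycles (contained in cosets $g\langle u\rangle$) and of $4$-cycles (contained in cosets $g\langle u,v\rangle$ with $u,v$ adjacent in $\Gamma$), and then leaves the remaining bookkeeping as an exercise. One cosmetic slip in your third bullet: $\langle \mathrm{star}(u)\rangle$ is not a prism in general (since $\mathrm{link}(u)$ need not be complete), but your argument only uses that it splits as the Cartesian product of the clique $\langle u\rangle$ with the connected subgraph $\langle \mathrm{link}(u)\rangle$, which is all that is needed to see that its $u$-labelled edges lie in a single hyperplane.
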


\noindent
The lemma is essentially a consequence of the characterisation of $3$- and $4$-cycles in $\mathrm{QM}(\Gamma)$. If $a,b,c \in \mathrm{QM}(\Gamma)$ is a $3$-cycle, then there exist generators $u,v$ and integers $r,s \in \mathbb{Z}_{\neq 0}$ such that $b=au^r$ and $c=av^s$, just because $b$ and $c$ are both adjacent to $a$. But $b$ and $c$ are also adjacent, so $u^{-r}v^s$ has to be a power of a generator. As a consequence of the normal in right-angled Artin groups, we must have $u=v$. Thus, $3$-cycles in $\mathrm{QM}(\Gamma)$ lie in cosets $g \langle u \rangle \rangle$ where $g \in A(\Gamma)$ and $u \in V(\Gamma)$. Next, if $a,b,c,d$ is an induced $4$-cycle, then there must exist generators $u,v \in V(\Gamma)$ and non-zero integers $r,s \in \mathbb{Z}$ such that $b=au^r$ and $d=av^s$ just because $b$ and $d$ are both adjacent to $a$. But they are also both adjacent to $c$, so there must exist two other generators $f,h \in V(\Gamma)$ and two non-zero integers $p,q \in \mathbb{Z}$ such that $au^rf^p = c = av^sh^q$. From $u^rf^p = v^sh^q$, we deduce again from the normal form in right-angled Artin groups that $u=h$, $f=v$, $p=s$, $q=r$, and that $u,v$ are adjacent in $\Gamma$. Thus, $4$-cycles in $\mathrm{QM}(\Gamma)$ lie in cosets $g \langle u, v \rangle$ where $g \in A(\Gamma)$ and where $u,v \in V(\Gamma)$ are adjacent.

\medskip \noindent
We leave the details as an exercice.

\addcontentsline{toc}{section}{Exercices}
\section*{Exercices}

\begin{exercice}\label{ex:MvsQM}
Show that a graph is median if and only if it is quasi-median and has no $3$-cycle. Show that, in a median graph, a subgraph is gated if and only if it is convex.
\end{exercice}

\begin{exercice}
Let $X$ be a quasi-median graph.
\begin{enumerate}
	\item Let $o,x,y \in X$ be three vertices such that $x,y$ are adjacent and $d(o,x)=d(o,y)$. Show that there exists a vertex $z \in X$ that is adjacent to both $x,y$ and that satisfies $d(o,z)=d(o,x)-1$.
	\item Let $o,x,y,z \in X$ be four vertices such that $y$ is adjacent to both $x,z$, such that $d(x,z)=2$, and such that $d(o,x)=d(o,z)=d(o,y)-1$. Show that there exists a vertex $w \in X$ that is adjacent to both $x,y$ that satisfies $d(o,w)=d(o,y)-2$. 
	\item Show that $X$ does not contain an induced subgraph isomorphic to the bipartite complete graph $K_{3,2}$ nor to $K_4^-$ (i.e. a complete graph $K_4$ with one edge removed).
\end{enumerate}
A graph satisfying the first two conditions is said \emph{weakly modular}. Usually, quasi-median graphs are defined as weakly modular graphs without induced $K_{3,2}$, $K_4^-$.
\end{exercice}

\begin{exercice}\label{ex:ConvexityInQM}
Let $X$ be a quasi-median graph. Given a path $(x_0, \ldots, x_n)$ in $X$, one
\begin{itemize}
	\item \emph{removes a backtrack} if $x_i=x_{i+2}$ for some $i$ and if we remove $x_{i+1},x_{i+2}$;
	\item \emph{shortens a $3$-cycle} if $x_i,x_{i+2}$ are adjacent for some $i$ and if we remove $x_{i+1}$;
	\item \emph{flips a $4$-cycle} if $[x_{i+1},x_i],[x_{i+1},x_{i+2}]$ span a $4$-cycle and if we replace $x_{i+1}$ with the vertex opposite to it in this $4$-cycle.
\end{itemize}
The goal of this exercice is to show that a subgraph $Y$ in $X$ is convex if and only if it is connected and \emph{locally convex} (i.e. every $4$-cycle having two consecutive edges in $Y$ must lie entirely in $Y$). 
\begin{enumerate}
	\item Show that any two geodesics with the same endpoints differ only by finitely many flips of $4$-cycles.
	\item Show that an arbitrary path can be turned into a geodesic by shortening $3$-cycles, by flipping $4$-cycles, and by removing backtrack.
	\item Show that a subgraph in $X$ is convex if and only if it is connected and locally convex. 
\end{enumerate}
\end{exercice}

\begin{exercice}
Let $X$ be a quasi-median graph. 
\begin{enumerate}
	\item Given a vertex $x \in X$ and a gated subgraph $Y \subset X$, show that every hyperplane separating $x$ from its projection on $Y$ separates $x$ from $Y$.
	\item Given two vertices $x,y \in X$ and a gated subgraph $Y \subset X$, show that the hyperplanes separating the projections of $x,y$ on $Y$ coincide with the hyperplanes separating $x,y$ that cross $Y$. Deduce that the projection on $Y$ is $1$-Lipschitz.
	\item Fix a collection of cliques $\mathcal{C}$ such that every hyperplane of $X$ contains exactly one clique in $\mathcal{C}$. Prove that $$\left\{ \begin{array}{ccc} X & \to & \prod\limits_{C \in \mathcal{C}} C \\ x & \mapsto & (\mathrm{proj}_C(x))_{C \in \mathcal{C}} \end{array} \right.$$ defines an isometric embedding. Show that the image may not be convex. 
\end{enumerate}
\end{exercice}

\begin{exercice}
Show Cartesian products and pointed sums of (quasi-)median graphs are (quasi-)median.
\end{exercice}

\begin{exercice}
Fix a graph $\Gamma$. The goal of this exercice is to prove Lemma~\ref{lem:QMGamma}. 
\begin{enumerate}
	\item Show that the edges of a $3$-cycle in $\mathrm{QM}(\Gamma)$ are all labelled by the same vertex of~$\Gamma$. Deduce the first item of Lemma~\ref{lem:QMGamma}.
	\item Show that, in a $4$-cycle, two opposite edges are labelled by the same vertex of $\Gamma$. Deduce the second item of Lemma~\ref{lem:QMGamma}.
	\item Given a vertex $u \in V(\Gamma)$, show that an edge in $\mathrm{QM}(\Gamma)$ belongs to $J_u$ if and only if it is of the form $\{g,gu^k\}$ for some $k \in \mathbb{Z}_{\neq 0}$ and some $g \in \langle \mathrm{link}(u) \rangle$. Deduce the third item of Lemma~\ref{lem:QMGamma}.
\end{enumerate}
\end{exercice}

\begin{exercice}
Let $X$ be an arbitrary median graph. Let $\Gamma$ denote its \emph{crossing graph}, i.e. the graph whose vertices are the hyperplanes in $X$ and whose edges connect two hyperplanes whenever they are transverse. The goal of this exercice is to show that $X$ embeds into $\mathrm{M}(\Gamma)$ as a convex subgraph.
\begin{enumerate}
	\item Fix an orientation for each hyperplane in $X$. Given an oriented edge $e$ and the hyperplane $J$ containing it, define the label $\ell(e)$ of $e$ as $J$ if the orientations of $e$ and $J$ agree, and $J^{-1}$ otherwise. Thus, an oriented path in $X$ is naturally labelled by a word of hyperplanes (and their inverses). Show that the word labelling a geodesic is graphically reduced.
	\item Show that the words labelling two paths with the same endpoints represent the same element of $A(\Gamma)$. (\emph{Hint: use Exercice~\ref{ex:ConvexityInQM}.})
	\item Fix a basepoint $x_0 \in X$. Show that the map $$\left\{ \begin{array}{ccc} X & \mapsto & \mathrm{M}(\Gamma) \\ x & \mapsto & \ell(\text{path from $x_0$ to $x$}) \end{array} \right.$$ induces an isometric embedding.
	\item Verify that the image of this embedding is convex. (\emph{Hint: use Exercice~\ref{ex:ConvexityInQM}.})
	\item Adapt the construction in order to show that a quasi-median graph $X$ can be realised as a convex subgraph in $\mathrm{QM}(\Gamma)$ where $\Gamma$ denotes the crossing graph of $X$. 
\end{enumerate}
\end{exercice}

\newpage

\section{Lecture 3: Creating RAAGs as subgroups}

\noindent
A well-known tool that is quite useful to construct free subgroups is the ping-pong lemma. It turns out that such a lemma naturally extends to right-angled Artin groups (\cite{CF}, see also \cite{MR2987617, QM}):

\begin{prop}\label{prop:PingPong}
Let $G$ be a group acting on some set $S$. Fix a generating set $R \subset G$ and assume that each $r \in R$ has an associated subset $S_r \subset S$ such that:
\begin{itemize}
	\item if $a,b \in R$ commute, then $a^k \cdot S_b \subset S_b$ for every $k \in \mathbb{Z}_{\neq 0}$;
	\item if $a,b \in R$ do not commute, then $a^k \cdot S_b \subset S_a$ for every $k \in \mathbb{Z}_{\neq 0}$;
	\item there exists a point $s_0 \in S \backslash \bigcup_{r \in R} S_r$ such that $a^k \cdot s_0 \in S_a$ for every $k \in \mathbb{Z}_{\neq 0}$.
\end{itemize}
Then $G$ is a right-angled Artin group. More precisely, if $C_R$ denotes the commutation graph of $R$ (i.e. $C_R$ is the graph whose vertex-set is $R$ and whose edges connect two generators if they commute), then the identity $R \to R$ extends to an isomorphism $A(C_R) \to G$. 
\end{prop}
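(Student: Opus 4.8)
The plan is to run a ping-pong argument in the usual way, but the subtlety is that we must produce a *two-sided* bound: on one hand, the assignment $r \mapsto r$ must extend to a group homomorphism $A(C_R) \to G$, and on the other hand, ping-pong must certify that this homomorphism is injective (it is automatically surjective since $R$ generates $G$).

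\medskip

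\textbf{Step 1: The homomorphism is well-defined.} Since $R$ generates $G$, the identity map $R \to R$ certainly extends to a surjective homomorphism from the free group $F(R)$ onto $G$. To get a homomorphism $\varphi \colon A(C_R) \to G$, I must check that whenever $a,b \in R$ commute in $C_R$ (i.e. $\{a,b\} \in E(C_R)$), the images $a,b$ commute in $G$. But this is built into the definition of $C_R$: its edges connect exactly those pairs of generators of $G$ that commute. So the defining relators of $A(C_R)$ map to $1$ in $G$, and $\varphi$ is a well-defined surjection.

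\medskip

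\textbf{Step 2: Injectivity via ping-pong.} Let $w$ be a nontrivial element of $A(C_R)$; I want to show $\varphi(w) \neq 1$ in $G$. Write $w$ as a graphically reduced word $w = a_1^{k_1} a_2^{k_2} \cdots a_n^{k_n}$ over the alphabet $R \sqcup R^{-1}$ (grouping consecutive powers of a common generator), with $n \geq 1$ and each $k_i \in \mathbb{Z}_{\neq 0}$, where — crucially — no cyclic-style cancellation is possible because the word is $C_R$-reduced. I will show $\varphi(w) \cdot s_0 \in S_{a_1}$, which forces $\varphi(w) \neq 1$ since $s_0 \notin S_{a_1}$ by the third hypothesis. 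The argument is by induction on $n$, reading the word from right to left: apply $a_n^{k_n}$ to $s_0$ first, landing in $S_{a_n}$ by the third bullet; then apply $a_{n-1}^{k_{n-1}}$, and so on. At each stage, having landed in $S_{a_{i+1}}$, applying $a_i^{k_i}$ sends us into $S_{a_i}$ if $a_i \neq a_{i+1}$ and they do not commute (second bullet), or keeps us in $S_{a_{i+1}}$ if $a_i$ and $a_{i+1}$ commute (first bullet). The one configuration to rule out is $a_i = a_{i+1}$ with $a_i^{k_i}$ applied to a point of $S_{a_{i+1}} = S_{a_i}$ — but that cannot occur, since having grouped powers we have $a_i \neq a_{i+1}$ for all $i$.

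\medskip

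\textbf{The hard part} — and the place where the $C_R$-reducedness of $w$ is genuinely used — is propagating the ``current letter'' correctly through runs of commuting generators. When $a_i$ commutes with $a_{i+1}$, we stay in $S_{a_{i+1}}$ rather than moving to $S_{a_i}$, so the relevant ``active index'' does not simply decrease. To make the induction go through cleanly, I would strengthen the inductive hypothesis: reading the suffix $a_j^{k_j} \cdots a_n^{k_n}$ applied to $s_0$, the result lies in $S_{a_m}$ where $a_m$ is the *first* letter (smallest index $\geq j$) in that suffix that does not commute with all of $a_j, \dots, a_{m-1}$ — equivalently, $a_m$ is a letter that cannot be shuffled to the front of the suffix. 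Here graphical reducedness guarantees that when we prepend $a_{j-1}^{k_{j-1}}$, either $a_{j-1}$ commutes with everything up to $a_m$ (so $a_m$ stays active, and $a_{j-1} \neq a_m$ since otherwise $a_{j-1}^{k_{j-1}}$ and $a_m^{k_m}$ would combine after shuffling, contradicting reducedness), or $a_{j-1}$ fails to commute with some earlier letter and becomes the new active index. In the first case the first bullet keeps us in $S_{a_m}$; in the second, the second bullet moves us into $S_{a_{j-1}}$. When $j = 1$ the active letter is $a_1$ (the first letter is never shuffleable past nothing), giving $\varphi(w) \cdot s_0 \in S_{a_1} \not\ni s_0$, hence $\varphi(w) \neq 1$. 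This shows $\varphi$ is injective, completing the proof that $\varphi \colon A(C_R) \to G$ is an isomorphism.
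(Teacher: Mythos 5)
Your Step 1 and your overall strategy for Step 2 (induction on a graphically reduced word, tracking which set $S_r$ the orbit of $s_0$ lands in) are the same as the paper's, and you correctly identify the delicate point. But the strengthened inductive invariant you propose is false, and this is a genuine gap. Take the two-letter reduced word $w=b^{k_1}c^{k_2}$ with $b,c$ non-commuting: then $c^{k_2}\cdot s_0\in S_c$ and $b^{k_1}\cdot S_c\subset S_b$, so the orbit point lies in $S_b$; yet the first letter of $w$ that ``cannot be shuffled to the front'' is $c$, so your invariant predicts $S_c$. Likewise your closing claim that for $j=1$ the point lands in $S_{a_1}$ fails for $w=a^{k_1}b^{k_2}c^{k_3}$ with $a$ commuting with both $b$ and $c$ but $b,c$ non-commuting: the point lands in $S_b$, not $S_{a_1}=S_a$. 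The correct invariant --- the one the paper proves --- is existential and points the other way: $w\cdot s_0\in S_t$ for \emph{some} letter $t$ that \emph{can} be shuffled to the front of $w$ (i.e.\ $t\in\mathrm{head}(w)$). Which $t$ occurs depends on the whole right-to-left history, because the active letter changes exactly when the newly prepended letter fails to commute with the \emph{current} active letter, not with some earlier letter of the suffix; so it cannot be read off by your syntactic rule. Note also that you never need to land in $S_{a_1}$ specifically: landing in any $S_t$ already gives $w\cdot s_0\neq s_0$, since $s_0\notin\bigcup_r S_r$.

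A second problem is your treatment of repeated generators. The configuration that must be excluded is not $a_i=a_{i+1}$ but $a_i=t$ where $t$ is the current active letter, and grouping consecutive equal powers does not exclude it. Moreover your claim that $a_{j-1}=a_m$ ``contradicts reducedness'' is wrong: $a\,b\,a$ with $a,b$ commuting is graphically reduced (it shuffles to $a^2b$, of the same length), yet there the prepended letter equals the active letter. This case really is uncontrolled by the hypotheses: if the first bullet were allowed with $a=b$, it would contradict the third, since then $s_0=a\cdot(a^{-1}\cdot s_0)\in a\cdot S_a\subset S_a$. The paper's proof fixes both issues at once: shuffle $w$ so as to write it as $r^kw'$ with $k$ chosen so that $r\notin\mathrm{head}(w')$, apply the inductive hypothesis to $w'$ to get $w'\cdot s_0\in S_t$ with $t\in\mathrm{head}(w')$ (hence $t\neq r$), and then split according to whether $r$ and $t$ commute; in either case the resulting set $S_t$ or $S_r$ is indexed by an element of $\mathrm{head}(w)$. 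If you replace your invariant by this one, the remainder of your argument goes through.
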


\begin{proof}
The identity map $R \to R$ extends to a surjective morphism $\varphi : A(C_R)\to G$, but we have to verify that it is injective. Let $g \in A(C_R)$ be a non-trivial element. We can represent it as a graphically reduced word $w$ over $R \cup R^{-1}$ (thought of as elements of $A(C_R)$), and its image under $\varphi$ is represented by the same word over $R \cup R^{-1}$ (thought of as elements of $G$). We want to prove by induction over the length of $w$ that $w \cdot s_0$ belongs to $S_r$ for some $r \in \mathrm{head}(w)$. Here, $\mathrm{head}(w)$ refers to the set of letters of $w$ that can be shuffled to the beginning of the word. 

\medskip \noindent
Write $w$ as $r^k w'$ for some $r \in \mathrm{head}(w)$ where $k \in \mathbb{Z}_{\neq 0}$ is such that $r \notin \mathrm{head}(w')$. If $w'$ is empty, then $w \cdot x_0 \in S_r$ by assumption. Otherwise, there exists $t \in \mathrm{head}(w')$ such that $w' \cdot s_0 \in S_t$. We distinguish two cases: either $r,t$ are adjacent in $\Gamma$, and it follows that $t$ also belongs to $\mathrm{head}(w)$ and that $w \cdot s_0 \in r^k \cdot S_t \subset S_t$; or $r,t$ are not adjacent in $C_R$, and it follows that $w \cdot s_0 \in r^k \cdot S_t \subset S_r$. This concludes the proof of our claim.

\medskip \noindent
It follows that $w \cdot s_0 \neq s_0$ since $s_0$ does not belong to $\bigcup_{r \in R} S_r$, proving that $\varphi(g) \neq 1$. 
\end{proof}

\noindent
As an application, it is possible to construct right-angled Artin groups in groups acting on quasi-median graphs. Before stating our main result in this direction, we need to introduce some terminology.

\begin{definition}
Let $G$ be a group acting on a quasi-median graph $X$. The \emph{rotative-stabiliser} $\mathrm{stab}_\circlearrowright(J)$ of a hyperplane $J$ is the intersection of the stabilisers of all the cliques in $J$. 
\end{definition}

\noindent
The picture to keep in mind is the following. Consider a group $A \oplus \mathbb{Z}$ acting on its Cayley graph $X:=\mathrm{Cayl}(A \oplus \mathbb{Z}, A \cup \{t\})$ where $t$ is a generator of the $\mathbb{Z}$-factor. Geometrically, $X$ is the product of a complete graph with $A$ as its vertex-set and a bi-infinite line. Let $J$ be the hyperplane containing the clique given by $A$. 
\begin{center}
\includegraphics[width=\linewidth]{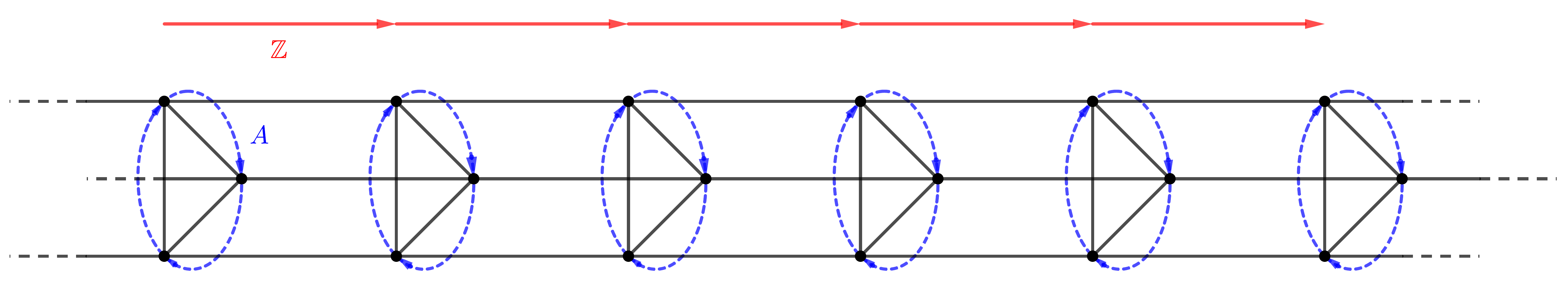}
\end{center}
Then $A \oplus \mathbb{Z}$ stabilises $J$, but the two factors have very different dynamics: the $\mathbb{Z}$-factor stabilises each factor delimited by $J$ and translates the cliques of $J$, while the $A$-factor stabilises each clique in $J$ and permutes the sectors delimited by $J$. The rotative-stabiliser of $J$ coincides with the $A$-factor.

\begin{prop}[\cite{QM}]\label{prop:SubRAAG}
Let $G$ be a group acting on a quasi-median graph $X$ and $\mathcal{J}$ a finite collection of hyperplanes. Assume that:
\begin{itemize}
	\item vertex-stabilisers are trivial;
	\item for every $J \in \mathcal{J}$, there exists a non-trivial element $r_J\in \mathrm{stab}_\circlearrowright(J)$ that permutes freely the sector delimited by $J$. 
\end{itemize}
There exists some $N \geq 1$ such that, for every $n \geq N$, the subgroup $H:=\langle r_J^n, \ J \in \mathcal{J} \rangle$ is a right-angled Artin group. More precisely, if $\mathscr{C} \mathcal{J}$ is the crossing graph of $\mathcal{J}$, then the map $J \mapsto r_J^n$ induces an isomorphism $A(\mathscr{C} \mathcal{J}) \to H$. 
\end{prop}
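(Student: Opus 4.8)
The plan is to deduce the statement from the ping-pong lemma for right-angled Artin groups (Proposition~\ref{prop:PingPong}), applied to the action of $H$ on $X$ with generating set $R:=\{r_J^n \mid J \in \mathcal{J}\}$. Two preparatory tasks are needed: (a) to identify the commutation graph of $R$ inside $H$ with the crossing graph $\mathscr{C}\mathcal{J}$, and (b) to produce ping-pong subsets $S_J \subseteq V(X)$ fulfilling the three bullet conditions of Proposition~\ref{prop:PingPong}.

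For (a): if $J,J'$ are transverse, I would use the local product structure of the quasi-median graph $X$ around the pair $\{J,J'\}$ — the carrier $N(J)\cap N(J')$ decomposes as (a clique of $J$) $\times$ (a clique of $J'$) $\times$ (a fibre) — together with the fact that $r_J\in\mathrm{stab}_\circlearrowright(J)$ permutes the first factor while fixing the second and third, and symmetrically for $r_{J'}$. Then $[r_J,r_{J'}]$ fixes a vertex of $X$; since vertex-stabilisers are trivial, $[r_J,r_{J'}]=1$, hence $[r_J^n,r_{J'}^n]=1$, which already yields a surjection $A(\mathscr{C}\mathcal{J})\twoheadrightarrow H$. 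Conversely, if $J\neq J'$ are not transverse, the two-generator instance of the ping-pong inclusions verified in (b) shows that $\langle r_J^n,r_{J'}^n\rangle$ is free of rank two, so $r_J^n$ and $r_{J'}^n$ do not commute. Hence the commutation graph of $R$ in $H$ is exactly $\mathscr{C}\mathcal{J}$, and Proposition~\ref{prop:PingPong} will output the desired isomorphism once (b) is in place.

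For (b) the geometric inputs I would isolate are: (i) for transverse $J,J'$, the element $r_J$ stabilises $J'$ and fixes each of its sectors (same local picture); (ii) for $J\neq J'$ not transverse, $N(J')$ lies in a single sector $\rho(J,J')$ of $J$ and, symmetrically, the complement of that sector lies in a single sector of $J'$, which therefore contains $N(J)$; (iii) $r_J$ acts freely on the set $\mathrm{Sect}(J)$ of sectors of $J$. Fix a base vertex $x_0$, and for each $J$ let $\mathrm{Spec}(J)\subseteq\mathrm{Sect}(J)$ be the finite family consisting of the sector of $J$ containing $x_0$ together with the sectors $\rho(J,J')$ over all $J'\in\mathcal{J}$ not transverse to $J$; set $S_J:=\bigcup\{\sigma\in\mathrm{Sect}(J)\mid \sigma\notin\mathrm{Spec}(J)\}$. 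Since $\mathcal{J}$ is finite and each $\langle r_J\rangle$ acts freely on $\mathrm{Sect}(J)$, only finitely many integers $c$ satisfy $r_J^c\sigma=\sigma'$ for some $J$ and some $\sigma,\sigma'\in\mathrm{Spec}(J)$; take $N$ larger than all of them. Then for $n\geq N$ and $k\neq 0$ the map $r_J^{nk}$ carries every sector in $\mathrm{Spec}(J)$ to a sector outside $\mathrm{Spec}(J)$. The three conditions of Proposition~\ref{prop:PingPong} now follow: the base-point condition holds because $x_0$ lies in a sector of $\mathrm{Spec}(J)$ for every $J$ (so $x_0\notin\bigcup_J S_J$) while $r_J^{nk}x_0$ lies in a non-special sector (hence in $S_J$); the commuting condition $r_J^{nk}S_{J'}\subseteq S_{J'}$ holds by (i), since $r_J$ fixes every sector of $J'$; and the non-commuting condition $r_J^{nk}S_{J'}\subseteq S_J$ holds by (ii), because $S_{J'}$ misses the sector of $J'$ containing $N(J)$ — that sector belongs to $\mathrm{Spec}(J')$ — hence $S_{J'}\subseteq\rho(J,J')$, and $r_J^{nk}(\rho(J,J'))$ is a non-special sector of $J$, so contained in $S_J$.

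The main obstacle will be establishing the structural facts (i), (ii) and the commutation $[r_J,r_{J'}]=1$ for transverse $J,J'$: these demand a precise description of how rotative-stabilisers act near a hyperplane and near a pair of transverse hyperplanes, and are the genuine quasi-median-geometry content of the proof, whereas the choice of $N$ and the verification of the inclusions amount to bookkeeping with the free action on sectors. I would also record that the hypotheses force each $r_J$ to have infinite order — otherwise a suitable power $r_J^n\neq 1$ would be a torsion element, impossible inside a right-angled Artin group — so that the $\langle r_J\rangle$-orbits on $\mathrm{Sect}(J)$ are infinite and the sets $S_J$ are non-empty.
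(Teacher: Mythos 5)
Your proof follows essentially the same route as the paper's: the same ping-pong sets $S_J$ (unions of sectors of $J$ avoiding $x_0$ and the non-transverse members of $\mathcal{J}$), the same choice of $N$ via freeness of the action on sectors, the same two-generator ping-pong for the non-transverse case, and the same treatment of the transverse case via a common prism together with triviality of vertex-stabilisers; you merely spell out the nesting facts (i)--(ii) in more detail than the paper does. One small caveat: your closing justification that $r_J$ has infinite order is circular (it presupposes that $H$ is a right-angled Artin group), and infinite order \emph{is} needed for your claim that only finitely many integers $c$ satisfy $r_J^c\sigma=\sigma'$ with $\sigma,\sigma'$ special -- but this is an implicit assumption the paper's proof makes as well, and it can instead be extracted a posteriori from the ping-pong itself, since $r_J^{nk}x_0\in S_J$ for all $k\neq 0$ while $x_0\notin S_J$.
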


\noindent
Here, the \emph{crossing graph} $\mathscr{C}\mathcal{J}$ refers to the graph whose vertex-set is $\mathcal{J}$ and whose edges connect two hyperplanes whenever they are transverse. 

\begin{proof}[Proof of Proposition~\ref{prop:SubRAAG}.]
Fix a basepoint $x_0 \in X$. Given a hyperplane $J \in \mathcal{J}$, let $S_J$ denote the union of the sectors delimited by $J$ that do not contain any hyperplane in $\mathcal{J}$ nor $x_0$. Because there are only finitely many sectors delimited by $J$ that do not belong to $S_J$ (as $\mathcal{J}$ is finite), there exists some $n_J \geq 1$ such that every power of $r_J^{n_J}$ sends a sector not in $S_J$ to a sector in $S_J$. Set $N:= \max \{n_J \mid J \in \mathcal{J} \}$ and fix some $n \geq N$. 

\medskip \noindent
Now, we want to apply Proposition~\ref{prop:PingPong} to $x_0$, $\{r^n_J \mid J \in \mathcal{J} \}$, and $\{S_J \mid J \in \mathcal{J}\}$. Observe that:
\begin{itemize}
	\item if $A,B \in \mathcal{J}$ are transverse, then $r_A^{nk} \cdot S_B=S_B$ for every $k \in \mathbb{Z}_{\neq 0}$;
	\item if $A,B \in \mathcal{J}$ are not transverse, then $r_A^{nk} \cdot S_B \subset S_A$ for every $k \in \mathbb{Z}_{\neq 0}$;
	\item $x_0$ does not belong to $\bigcup_{J \in \mathcal{J}} S_J$ and $r_J^{nk} \cdot x_0 \in S_J$ for all $J \in \mathcal{J}$ and $k \in \mathbb{Z}_{\neq 0}$. 
\end{itemize}
Thus, the desired conclusion follows from the next assertion:

\begin{fact}
For any two distinct $A,B \in \mathcal{J}$, $\langle r_A,r_B \rangle = \langle r_A \rangle \oplus \langle r_B \rangle$ if $A,B$ are transverse and $\langle r_A,r_B \rangle= \langle r_A \rangle \ast \langle r_B \rangle$ otherwise.
\end{fact}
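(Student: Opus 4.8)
The plan is to prove the \textbf{Fact} by a direct ping-pong argument, treating the two hyperplanes $A,B$ as a special case of the configuration already set up for the general ping-pong lemma. Since $\mathcal{J}$ in the \textbf{Fact} can be taken to be just $\{A,B\}$, the sectors $S_A$ and $S_B$ are defined as before: $S_A$ is the union of the sectors delimited by $A$ that do not meet $B$ and do not contain $x_0$ (and symmetrically for $S_B$). The first step is to record the two geometric dichotomies that distinguish the transverse from the non-transverse case, so I would first argue the elementary ping-pong inclusions.

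First I would treat the \emph{transverse} case. If $A$ and $B$ are transverse, then $B$ crosses every sector delimited by $A$ --- this is the basic feature of transversality in a quasi-median graph (each sector of $A$ contains a clique of $B$, hence $B$ itself). In particular no sector delimited by $A$ is contained in $S_B$; symmetrically, no sector of $B$ lies in $S_A$. Since $r_A$ permutes the sectors of $A$ and stabilises each sector of $B$ (it lies in $\mathrm{stab}_\circlearrowright(A)$, so it fixes each clique of $A$ pointwise and in particular preserves the partition of $X$ induced by $B$; more precisely, transversality guarantees that rotating around $A$ does not move the $B$-sides), one gets $r_A^{k}\cdot S_B = S_B$ and $r_B^{k}\cdot S_A = S_A$ for all $k\neq 0$, while $r_A^{nk}\cdot x_0\in S_A$ and $r_B^{nk}\cdot x_0\in S_B$ by the choice of $n$. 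Applying Proposition~\ref{prop:PingPong} to $G=\langle r_A^n, r_B^n\rangle$, $R=\{r_A^n,r_B^n\}$ with $C_R$ the single edge, yields $\langle r_A^n,r_B^n\rangle \cong \mathbb{Z}^2$; then uniqueness of roots in the ambient structure (or simply that $r_A,r_B$ commute, which follows since $A,B$ transverse implies the rotative stabilisers commute elementwise) upgrades this to $\langle r_A,r_B\rangle = \langle r_A\rangle\oplus\langle r_B\rangle$.

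Next I would treat the \emph{non-transverse} case. Here $A$ and $B$ are not transverse, so they are \emph{nested} or \emph{disjoint} in the usual sense for quasi-median graphs: $B$ lies in a single sector of $A$, and $A$ lies in a single sector of $B$. Thus every sector of $A$ except the one containing $B$ (and the one containing $x_0$, if different) lies in $S_A$, and in particular $B$ --- together with all of $S_B$ --- lies inside a sector of $A$ not in $S_A$; by the choice of $n$, $r_A^{nk}$ maps that sector into $S_A$, giving $r_A^{nk}\cdot S_B\subset S_A$, and symmetrically $r_B^{nk}\cdot S_A\subset S_B$. The basepoint conditions are as before. Proposition~\ref{prop:PingPong} with $C_R$ the graph on two vertices with no edge then gives $\langle r_A^n,r_B^n\rangle = \langle r_A^n\rangle\ast\langle r_B^n\rangle$, and again the free product statement for $\langle r_A,r_B\rangle$ itself follows because a free product of infinite cyclic groups has no nontrivial roots obstruction --- one checks that any relation among $r_A, r_B$ would descend to a relation among suitable powers, contradicting the ping-pong conclusion.

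The main obstacle I anticipate is the last upgrade --- passing from the statement about the powers $r_A^n, r_B^n$ to the statement about $r_A, r_B$ themselves --- since Proposition~\ref{prop:PingPong} as stated only controls the subgroup generated by the chosen powers. In the transverse case this is painless once one knows $r_A$ and $r_B$ commute (which is immediate from transversality and the definition of rotative-stabiliser). In the non-transverse case one must rule out accidental relations like $r_A^p = r_B^q$; the cleanest route is to observe that $r_A$ acts on $X$ fixing (setwise) each sector of $B$ that does not contain $A$ while moving $x_0$ out of the $B$-sector containing it only via conjugation by powers already handled --- more robustly, one notes that $r_A$ stabilises the hyperplane $A$ and acts nontrivially on its sectors while fixing $B$'s sectors, and $r_B$ does the reverse, so no nontrivial power of one can equal a power of the other. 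I would present this verification succinctly rather than in full detail, since it is the kind of bookkeeping the reader can reconstruct; the geometric heart of the argument is entirely in the two dichotomies of the previous paragraphs.
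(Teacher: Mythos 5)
There is a genuine gap, and it sits exactly at the point the Fact is meant to address. In the transverse case you assert that $r_A$ and $r_B$ commute because ``$A,B$ transverse implies the rotative stabilisers commute elementwise'', and that this is ``immediate from transversality and the definition of rotative-stabiliser''. It is not: for an arbitrary group acting on a quasi-median graph, the rotative-stabilisers of two transverse hyperplanes need not commute. What transversality actually gives is a prism $C\times D$ with $C\subset A$ and $D\subset B$; since $r_A$ stabilises each clique $C\times\{d\}$ \emph{setwise} (not ``pointwise'' as you write --- that would contradict the free permutation of the sectors) and acts on it by the same permutation of sectors for every $d$, while $r_B$ does the symmetric thing, the commutator $[r_A,r_B]$ fixes every vertex of $C\times D$. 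Only now does the hypothesis of Proposition~\ref{prop:SubRAAG} that \emph{vertex-stabilisers are trivial} force $[r_A,r_B]=1$; your argument never invokes this hypothesis, and the substitute you offer (``uniqueness of roots in the ambient structure'') has no meaning for an arbitrary group $G$. The same prism picture also gives $\langle r_A\rangle\cap\langle r_B\rangle=1$, since a nontrivial power of $r_A$ moves every vertex of $C\times D$ in the $C$-direction only; this is what completes $\langle r_A,r_B\rangle=\langle r_A\rangle\oplus\langle r_B\rangle$.

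A second, structural problem is that both of your appeals to Proposition~\ref{prop:PingPong} are circular: its hypotheses are phrased in terms of the commutation graph $C_R$, so to apply it with $C_R$ a single edge (resp.\ two isolated vertices) you must already know that $r_A^n$ and $r_B^n$ commute (resp.\ do not commute) --- which is precisely the information the Fact is supposed to feed into the proof of Proposition~\ref{prop:SubRAAG}. In the non-transverse case you should instead run the classical two-set ping-pong lemma for free products directly, taking as tables the union of the sectors of $A$ not containing $B$ and the union of the sectors of $B$ not containing $A$; your geometric observations there (each hyperplane lies in a single sector of the other, and $r_A$ permutes the sectors of $A$ freely) are correct and yield $\langle r_A,r_B\rangle=\langle r_A\rangle\ast\langle r_B\rangle$ for the elements themselves, with no need for the vague ``upgrade from powers'' discussion at the end --- where, incidentally, the claim that $r_A$ fixes the sectors of $B$ is false precisely in the non-transverse case.
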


\noindent
If $A,B$ are not transverse, the decomposition $\langle r_A,r_B \rangle = \langle r_A \rangle \ast \langle r_B \rangle$ follows from what we have already said by a classical ping-pong argument. Assume that $A,B$ are transverse. So there exists a prism $C \times D$ with $C \subset A$ and $D \subset B$. Because $r_A$ (resp. $r_B$) acts freely on $C$ (resp. $D$) and acts trivially on $D$ (resp. $C$), and that vertex-stabilisers are trivial, we conclude that $\langle r_A,r_B \rangle= \langle r_A \rangle \ast \langle r_B \rangle$ as desired.
\end{proof}

\noindent
Proposition~\ref{prop:SubRAAG} holds for arbitrary groups acting on quasi-median graphs, so let us restrict its statement to right-angled Artin groups.

\begin{cor}\label{cor:KK}\footnote{As shown in \cite{MR3072113}, the converse of the corollary may not hold in full generality.}
Let $\Gamma$ be a graph. Let $\mathscr{C}(\Gamma)$ denote the crossing graph of all the hyperplanes in $\mathrm{QM}(\Gamma)$. For every finite induced subgraph $\Lambda \leq \mathscr{C}(\Gamma)$, $A(\Gamma)$ contains a subgroup isomorphic to $A(\Lambda)$.
\end{cor}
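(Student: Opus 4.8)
The plan is to deduce Corollary~\ref{cor:KK} directly from Proposition~\ref{prop:SubRAAG} applied to the action of $A(\Gamma)$ on its own quasi-median graph $\mathrm{QM}(\Gamma)$, with $\mathcal{J}$ the finite collection of hyperplanes corresponding to the vertices of $\Lambda \leq \mathscr{C}(\Gamma)$. First I would record that $A(\Gamma)$ acts on $\mathrm{QM}(\Gamma)$ by left multiplication and, by Lemma~\ref{lem:QMGamma}, the vertex-stabilisers of this action are trivial (the action on vertices is simply transitive), so the first hypothesis of Proposition~\ref{prop:SubRAAG} is satisfied. Next, for each vertex $J$ of $\Lambda$, viewed as a hyperplane of $\mathrm{QM}(\Gamma)$, I would exhibit a non-trivial element $r_J \in \mathrm{stab}_\circlearrowright(J)$ permuting freely the sectors delimited by $J$. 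By Lemma~\ref{lem:QMGamma}, a hyperplane of $\mathrm{QM}(\Gamma)$ has the form $gJ_u$ for some $g \in A(\Gamma)$ and $u \in V(\Gamma)$, where $J_u$ contains the clique $\langle u \rangle$; the natural candidate for the rotative element is the conjugate $g u g^{-1}$ of the generator $u$, or its image under left translation by $g$. I would check, using the description of cliques as cosets $h\langle u \rangle$ and of the hyperplane $J_u$ as the union of the cliques $h\langle u\rangle$ with $h \in \langle \mathrm{link}(u)\rangle$, that $gug^{-1}$ stabilises every clique of $gJ_u$ (since $u$ commutes with every element of $\langle \mathrm{link}(u)\rangle$, so left multiplication by $u$ fixes each coset $h\langle u\rangle$ setwise) — hence it lies in the rotative-stabiliser — and that it permutes the sectors (the cosets of $\langle \mathrm{star}(u)\rangle$ adjacent across the clique) freely, because $\langle u \rangle$ acts freely by left multiplication on the set of left cosets of $\langle \mathrm{star}(u)\rangle$ containing $\langle u \rangle$, thanks to the normal form (no non-trivial power of $u$ lies in $\langle \mathrm{star}(u)\rangle$).

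With both hypotheses verified, Proposition~\ref{prop:SubRAAG} yields an $N \geq 1$ such that for $n \geq N$ the subgroup $H = \langle r_J^n \mid J \in \mathcal{J}\rangle$ is isomorphic, via $J \mapsto r_J^n$, to $A(\mathscr{C}\mathcal{J})$, where $\mathscr{C}\mathcal{J}$ is the crossing graph of $\mathcal{J}$. The final step is to identify $\mathscr{C}\mathcal{J}$ with $\Lambda$: since $\Lambda$ is by hypothesis an \emph{induced} subgraph of $\mathscr{C}(\Gamma)$, and the edges of $\mathscr{C}(\Gamma)$ record exactly the transversality relation between hyperplanes, the crossing graph of the sub-collection $\mathcal{J}$ of hyperplanes indexed by $V(\Lambda)$ is precisely $\Lambda$. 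Therefore $H \cong A(\Lambda)$ is a subgroup of $A(\Gamma)$, which is the desired conclusion; picking any single value $n \geq N$ suffices.

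The main obstacle I expect is the verification that the conjugate generator $r_J = gug^{-1}$ (for $J = gJ_u$) genuinely lies in $\mathrm{stab}_\circlearrowright(J)$ and acts freely on the sectors, i.e. matching the somewhat abstract ``rotative-stabiliser / permutes freely the sectors'' hypotheses of Proposition~\ref{prop:SubRAAG} against the concrete coset combinatorics of $\mathrm{QM}(\Gamma)$ from Lemma~\ref{lem:QMGamma}. This is where the normal form (Proposition~\ref{prop:NormalForm}) and the precise description of hyperplane-stabilisers as $g\langle\mathrm{star}(u)\rangle g^{-1}$ do the real work: one must see that left multiplication by $u$ fixes each clique in $J_u$ setwise (using that the cliques of $J_u$ are the cosets $h\langle u\rangle$ with $h$ in $\langle\mathrm{link}(u)\rangle$, and $u$ commutes with $\langle\mathrm{link}(u)\rangle$) while moving the sectors without fixed points (using uniqueness of normal forms to rule out $u^k \in h\langle\mathrm{star}(u)\rangle h^{-1}$ for $k \neq 0$). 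Everything else — triviality of vertex-stabilisers, finiteness of $\mathcal{J}$, and the identification $\mathscr{C}\mathcal{J} = \Lambda$ — is immediate from the results already established.
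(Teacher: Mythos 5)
Your proposal is correct and takes exactly the paper's route: the paper's entire proof is the one-line remark that the corollary is a straightforward application of Proposition~\ref{prop:SubRAAG} to the action of $A(\Gamma)$ on $\mathrm{QM}(\Gamma)$, and you have simply spelled out the hypothesis checks (trivial vertex-stabilisers since the action on the Cayley graph is simply transitive, $gug^{-1}\in\mathrm{stab}_\circlearrowright(gJ_u)$ rotating the sectors freely, and $\mathscr{C}\mathcal{J}=\Lambda$ because $\Lambda$ is an induced subgraph of $\mathscr{C}(\Gamma)$). One minor imprecision worth fixing: the sectors delimited by $J_u$ are not cosets of $\langle\mathrm{star}(u)\rangle$ but are indexed by $\langle u\rangle$ (the sector of $u^k$ being the component of $\mathrm{QM}(\Gamma)\backslash\backslash J_u$ whose intersection with the carrier is $\langle\mathrm{link}(u)\rangle u^k$), on which $u$ acts by translation --- freely, as you say, by the normal form.
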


\begin{proof}
The assertion is a straightforward application of Proposition~\ref{prop:SubRAAG} to the action of $A(\Gamma)$ on $\mathrm{QM}(\Gamma)$. 
\end{proof}

\noindent
One can easily describe the crossing graph $\mathscr{C}(\Gamma)$ as follows\footnote{It is worth noticing that this description coincides exactly with the definition of the \emph{extension graph} given in \cite{MR3039768}. Thus, Corollary~\ref{cor:KK} coincides with \cite[Theorem~1.3]{MR3039768}.}:
\begin{itemize}
	\item the vertices of $\mathscr{C}(\Gamma)$ are the $A(\Gamma)$-conjugates of the generators in $V(\Gamma)$;
	\item the edges of $\mathscr{C}(\Gamma)$ connect two conjugates if they commute.
\end{itemize}
This description follows from the fact that the hyperplanes in $\mathrm{QM}(\Gamma)$ are the $gJ_u$, where $g \in A(\Gamma)$ and where $J_u$ denotes the hyperplane containing the clique $\langle u \rangle \leq \mathrm{QM}(\Gamma)$; that the rotative-stabiliser of $gJ_u$ is $g \langle u \rangle g^{-1}$; and that two hyperplanes in $\mathrm{QM}(\Gamma)$ are transverse if and only if their rotative-stabilisers commute, otherwise they span a free group. We leave the details as an exercice. 

\medskip \noindent
As an illustration, let us construct non-trivial embeddings between right-angled Artin groups. See also Exercice~\ref{ex:EmbeddingTree}. 

\begin{ex}
Let $\Gamma$ be a $5$-cycle $(a,b,c,d,e)$. By rotating the hyperplanes $J_c,J_d,J_e,J_a$ around $J_b$, we find a $6$-cycle of hyperplanes $J_c,J_d,J_e,J_a,bJ_e,bJ_d$.
\begin{center}
\includegraphics[width=0.5\linewidth]{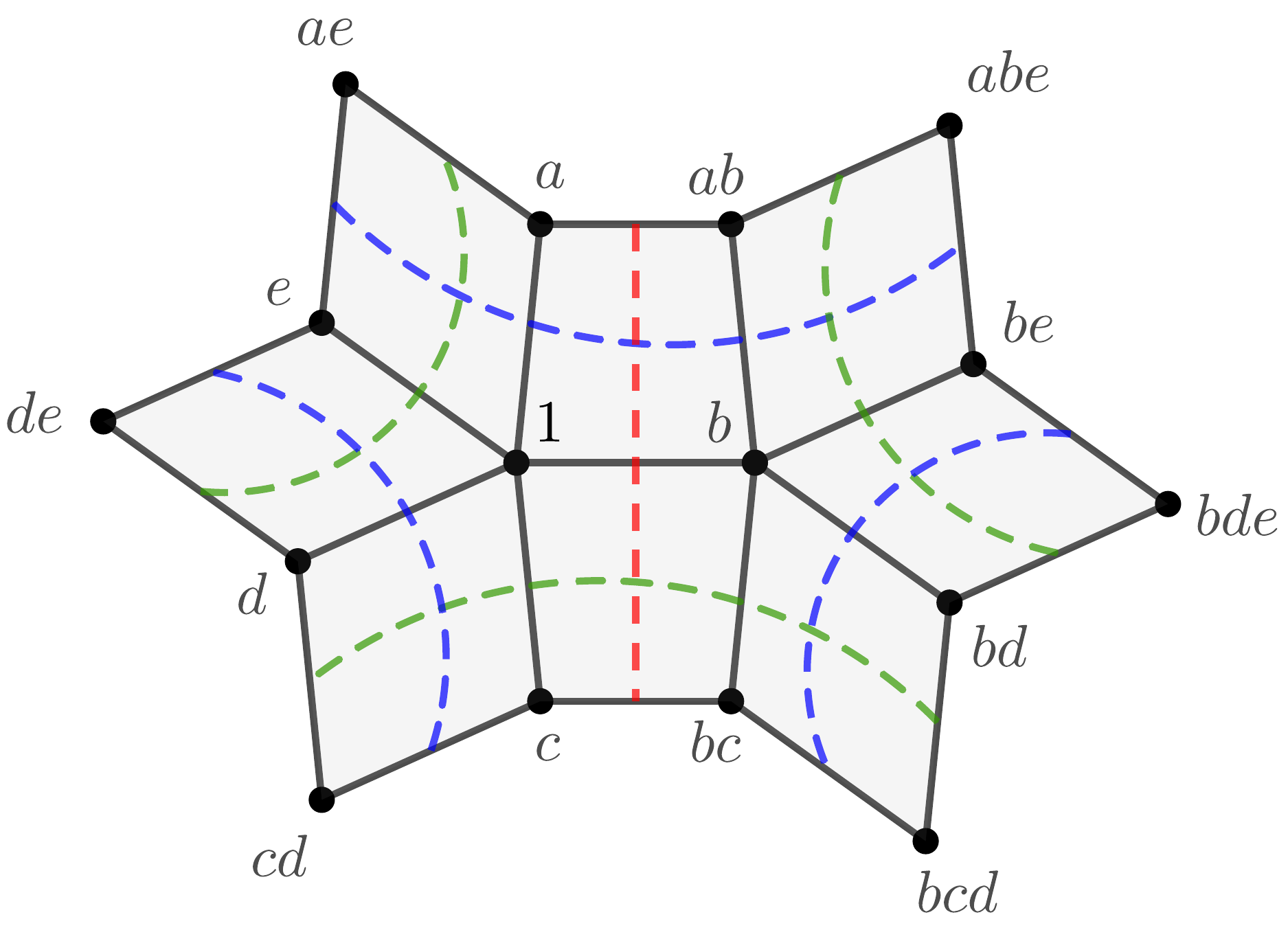}
\end{center}
\noindent
Thus, $\{ c,d,e,a,beb^{-1},bdb^{-1} \}$ is a basis for a subgroup of $A(\Gamma)$ isomorphic to the right-angled Artin group over a $6$-cycle.
\end{ex}

\begin{ex}
Let $\Gamma$ be a $3$-path $(a,b,c,d)$. By rotating the hyperplanes $J_a,J_b,J_c$ around $J_d$, we find a $4$-path of hyperplanes $J_a,J_b,J_c,dJ_b,dJ_a$.
\begin{center}
\includegraphics[width=0.4\linewidth]{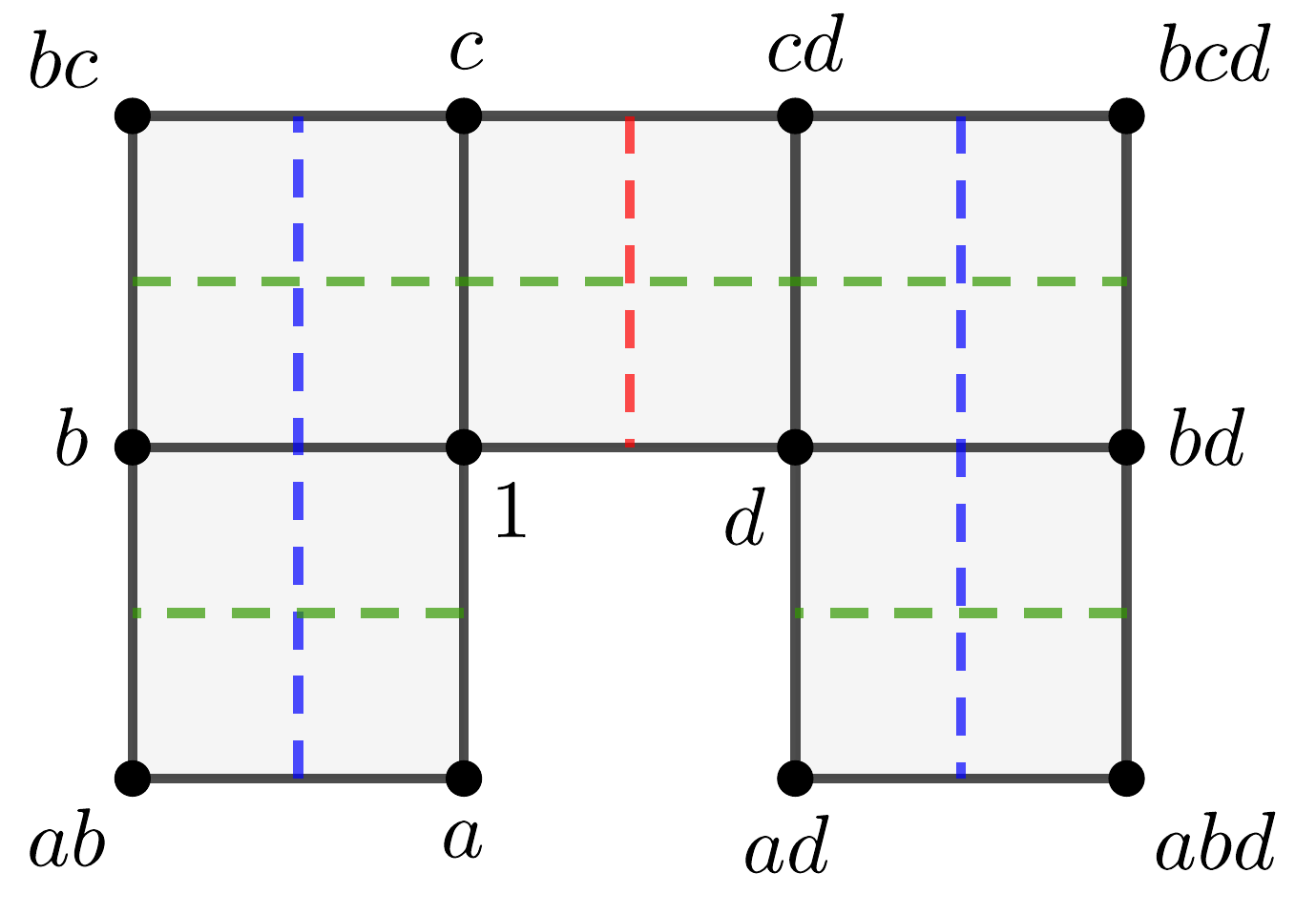}
\end{center}
Thus, $\{a,b,c,dbd^{-1},dad^{-1}\}$ is a basis for a subgroup of $A(\Gamma)$ isomorphic to the right-angled Artin group over a $4$-path.
\end{ex}

\begin{ex}
The two previous examples are particular cases of a general construction. Let $\Gamma$ be an arbitrary graph and $u \in V(\Gamma)$ a vertex. By rotating the hyperplanes $\{J_v \mid v \in V(\Gamma) \backslash \{u\} \}$ around $J_u$ by applying $u$, we get a collection of hyperplanes $\{J_v \mid v \in V(\Gamma) \backslash \{u\} \} \cup \{J_u\} \cup \{uJ_v \mid v \in V(\Gamma) \backslash \mathrm{star}(u) \}$ whose crossing graph coincides with the amalgamation $\Gamma \underset{\mathrm{star}(u)}{\ast} \Gamma$. Consequently, $V(\Gamma) \cup u \left( V(\Gamma) \backslash \mathrm{star}(u) \right) u^{-1}$ is a basis for a subgroup of $A(\Gamma)$ isomorphic to the right-angled Artin group $A \left( \Gamma \underset{\mathrm{star}(u)}{\ast} \Gamma \right)$. Moreover, it can be shown that this subgroup coincides with the kernel of $A(\Gamma) \twoheadrightarrow \langle u \rangle \simeq \mathbb{Z} \twoheadrightarrow \mathbb{Z}/2\mathbb{Z}$. In particular, it has index two in $A(\Gamma)$.  
\end{ex}

\addcontentsline{toc}{section}{Exercices}
\section*{Exercices}

\begin{exercice}
Let $\Gamma$ be a graph. For every $u \in V(\Gamma)$, fix an integer $p(u) \in \mathbb{Z}_{\neq 0}$. Show that the subgroup $\langle u^{p(u)} \mid u \in V(\Gamma) \rangle$ in $A(\Gamma)$ is again isomorphic to $A(\Gamma)$. Give one proof by using the normal form given by Proposition~\ref{prop:NormalForm}, and another one by playing ping-pong on the quasi-median graph $\mathrm{QM}(\Gamma)$.  
\end{exercice}

\begin{exercice}\label{ex:EmbeddingTree}
If $\Gamma$ is a finite forest, show that $A(\Gamma)$ embeds into $A(P_4)$, where $P_4$ denotes a path of length $4$.
\end{exercice}

\begin{exercice}
Let $\Gamma$ be a graph and $\Lambda \subset \Gamma$ an induced subgraph. Let $\rho_\Lambda : A(\Gamma) \to \langle \Lambda \rangle$ denote the retraction that kills all the generators not in $V(\Lambda)$. 
\begin{enumerate}
	\item Show that $\mathrm{ker}(\rho_\Lambda)$ is a right-angled Artin group $A(\Psi)$. (\emph{Hint: Show that $\mathrm{ker}(\rho_\Lambda)$ is generated by the rotative-stabilisers of the hyperplanes tangent to the subgraph $\langle \Lambda \rangle$ in $\mathrm{QM}(\Gamma)$.})
	\item Show that, if $\Gamma$ is finite, then every non-trivial subgroup in $A(\Gamma)$ surjects onto $\mathbb{Z}$. (\emph{Hint: Show the statement in the more general situation where $\Gamma$ is allowed to be infinite but with a finite chromatic number. If $\Lambda$ is a maximal monochromatic subgraph, show that the chromatic number of the $\Psi$ from the previous section is smaller that the chromatic number of $\Gamma$.})
	\item Prove that $\langle a,b \mid aba^{-1}=b^{-1}, bab^{-1}=a^{-1} \rangle$ is torsion free, virtually $\mathbb{Z}^2$, and does not embed in a right-angled Artin group.
\end{enumerate}
\end{exercice}

\begin{exercice}
Let $\Gamma$ be graph.
\begin{enumerate}
	\item Show that, for every induced subgraph $\Lambda \subset \Gamma$, the sugraph $\langle \Lambda \rangle$ is convex in $\mathrm{M}(\Gamma)$.
	\item Fix two induced subgraphs $\Phi,\Psi$ and an element $g \in A(\Gamma)$. Assume that $1$ and $g$ are two vertices minimising the distance between $\langle \Phi \rangle$ and $g \langle \Psi \rangle$. Prove that $$\mathrm{stab}(\langle \Phi \rangle) \cap \mathrm{stab}(g \langle \Psi \rangle) = \langle \Phi \cap \Psi \cap \mathrm{link}(\mathrm{supp}(g)) \rangle.$$ (\emph{Hint: If $h$ stabilises $\langle \Phi \rangle$ and $g \langle \Psi \rangle$, observe that the hyperplanes separating $1$ and $h$ coincide with the hyperplanes separating $g$ and $hg$; and that the hyperplanes separating $1$ and $g$ coincide with the hyperplanes separating $h$ and $hg$.})
	\item Deduce that, in $A(\Gamma)$ the intersection of two parabolic subgroups\footnote{In $A(\Gamma)$, a subgroup is parabolic if it conjugate to $\langle \Lambda \rangle$ for some induced subgraph $\Lambda \subset \Gamma$.} is again a parabolic subgroup.
	\item Given an induced subgraph $\Lambda \subset \Gamma$, show that the normaliser of $\langle \Lambda \rangle$ is $\langle \Lambda \cup \mathrm{link}(\Lambda) \rangle$.
\end{enumerate}
\end{exercice}

\newpage

\section{Lecture 4: An algorithm in dimension two}

\noindent
As already mentioned, the Embedding Problem among right-angled Artin groups is widely open in full generality. Nevertheless, it turns out that the problem can be algorithmically solved under the assumption that the graphs under consideration have girth $>3$ (i.e. do not contain $3$-cycles). 

\begin{thm}\label{thm:Algo}
There exists an algorithm that decides, given two finite graphs $\Phi,\Psi$ of girth $>3$, whether or not $A(\Phi)$ embeds into $A(\Psi)$. If so an explicit basis is provided.
\end{thm}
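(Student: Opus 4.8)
The plan is to reduce the Embedding Problem for girth-$>3$ graphs to a purely combinatorial question about the crossing graph $\mathscr{C}(\Psi)$, and then to show that this combinatorial question is decidable because only finitely many candidate subgraphs need to be examined. By Corollary~\ref{cor:KK}, if $\Phi$ embeds as an induced subgraph of $\mathscr{C}(\Psi)$ then $A(\Phi)$ embeds into $A(\Psi)$; the heart of the matter is the converse in the girth-$>3$ (i.e. two-dimensional) case, namely that $A(\Phi) \hookrightarrow A(\Psi)$ \emph{forces} $\Phi$ to appear as an induced subgraph of $\mathscr{C}(\Psi)$. This is where the assumption on girth is essential: when $\Psi$ has no $3$-cycle, $\mathrm{QM}(\Psi)$ has no prisms of dimension $\geq 2$ in a strong sense (cliques are single edges outside the abelian directions), so $\mathrm{QM}(\Psi)$ collapses to a CAT(0) square complex and $A(\Psi)$ acts on a median graph with well-controlled hyperplane combinatorics. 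One first shows that any injective homomorphism $A(\Phi)\to A(\Psi)$ can be ``straightened'': the generators of $A(\Phi)$ can be taken (after conjugation and passing to suitable powers, using that cyclic subgroups are undistorted, cf. the first Exercice) to be hyperbolic elements whose axes in $\mathrm{M}(\Psi)$ are combinatorial geodesics, and whose commutation pattern is detected by transversality of the associated hyperplanes in $\mathrm{QM}(\Psi)$. This recovers $\Phi$ inside $\mathscr{C}(\Psi)$.

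The second ingredient is a \textbf{finiteness / bounding} step. A priori $\mathscr{C}(\Psi)$ is infinite, so one cannot simply enumerate its induced subgraphs. The key point is that $\mathscr{C}(\Psi)$ is a highly homogeneous object — it is the extension graph in the sense of \cite{MR3039768} — and when $\Psi$ has girth $>3$ its combinatorics is quasi-tree-like: any induced copy of a finite graph $\Phi$ inside $\mathscr{C}(\Psi)$ can be ``pushed'' into a bounded neighbourhood, in the sense that there is an explicit function $f$ such that if $\Phi$ embeds at all then it embeds using conjugates $g_i u_i g_i^{-1}$ with the $g_i$ of word length $\le f(|\Phi|,|\Psi|)$. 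The proof of this bound is a folding/normal-form argument: given an induced embedding $\Phi \hookrightarrow \mathscr{C}(\Psi)$, consider the subgraph of $\mathrm{QM}(\Psi)$ spanned by the corresponding hyperplanes together with a geodesic spanning tree connecting them; a minimal such configuration cannot have long ``legs'' with no transversality happening along them, because in a two-dimensional quasi-median graph a long clean segment can be shortcut by a shuffling/flip (Exercice~\ref{ex:ConvexityInQM}) without destroying the induced-subgraph property. Hence one may assume the conjugators are short.

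The algorithm is then: enumerate all conjugates $w u w^{-1}$ with $u \in V(\Psi)$ and $|w| \le f(|\Phi|,|\Psi|)$ — a finite explicit list of vertices of $\mathscr{C}(\Psi)$, with adjacency (commutation) decidable by the Commutation Problem (Theorem~\ref{thm:Centralisers}) — form the finite induced subgraph $K$ of $\mathscr{C}(\Psi)$ they span, and test whether $\Phi$ embeds as an induced subgraph of $K$ (a finite graph-theoretic check). Output ``yes'' together with the explicit basis $\{g_i u_i g_i^{-1}\}$ given by Corollary~\ref{cor:KK} if such an embedding is found, and ``no'' otherwise; correctness is guaranteed by the two ingredients above.

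The step I expect to be the main obstacle is the \textbf{forcing} direction together with its effective bound: proving that an arbitrary abstract embedding $A(\Phi)\hookrightarrow A(\Psi)$ must arise from an induced subgraph of $\mathscr{C}(\Psi)$ \emph{with explicitly bounded conjugators}. The first half of this (existence of the induced subgraph, non-effectively) is essentially \cite[Theorem~1.3]{MR3039768} reinterpreted via quasi-median geometry, but extracting a computable bound $f$ requires a careful analysis of minimal hyperplane configurations in the two-dimensional quasi-median graph $\mathrm{QM}(\Psi)$ — controlling how $3$-cycles in $\Phi$ (which, since $\Phi$ has girth $>3$, there are none of) and more relevantly how the join structure and the absence of $K_4^-$ interact — and this is where the girth hypothesis on \emph{both} graphs is used decisively. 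Without girth $>3$, prisms of higher dimension appear and the clean segments can hide arbitrarily complicated behaviour, so no such bound is known, which is exactly why the general Embedding Problem remains open.
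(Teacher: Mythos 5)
Your overall architecture is the same as the paper's: show that an abstract embedding $A(\Phi)\hookrightarrow A(\Psi)$ forces $\Phi$ to appear as the crossing graph of a collection of hyperplanes of $\mathrm{QM}(\Psi)$ that can be pushed into an explicitly bounded ball, then enumerate. But both of your key steps are only sketched, and each sketch has a genuine gap.

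First, the ``forcing'' step. The actual mechanism is not a straightening of axes but a centraliser computation: for a \emph{non-leaf} vertex $u$ of $\Phi$, the centraliser of $u$ in $A(\Phi)$ contains $\mathbb{F}_2$ (two neighbours of $u$, non-adjacent because $\Phi$ has girth $>3$), so $\varphi(u)$ has non-abelian centraliser in $A(\Psi)$; since $\Psi$ has girth $>3$, Theorem~\ref{thm:Centralisers} then forces $\varphi(u)=g\,a^{k}g^{-1}$ for a standard generator $a$, i.e.\ $\varphi(u)$ lies in the rotative-stabiliser of a single hyperplane. Your claim that ``the generators of $A(\Phi)$ can be taken, after conjugation and passing to powers, to be'' such elements is false for the \emph{leaves} of $\Phi$: a leaf $\ell$ with neighbour $u$ has centraliser $\langle \ell\rangle\oplus\langle u\rangle\cong\mathbb{Z}^2$, so $\varphi(\ell)$ can perfectly well be of the form $g\,a^{k}w\,g^{-1}$ with $w\in\langle\mathrm{link}(a)\rangle$ non-trivial, which is associated to no single hyperplane, and no conjugation or power fixes this. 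The paper has to run a separate argument for leaves, replacing $\varphi(\ell_i)$ by an entirely different element, namely one in the rotative-stabiliser of the translated hyperplane $\varphi(\ell_i)g(v_i)J_{a(v_i)}$ where $v_i$ is a second neighbour of $u_i$; one then checks by hand that this adds exactly one leaf to the crossing graph. Your proposal does not address this case at all, and the resulting basis is not made of (powers of) the $\varphi$-images of the generators, contrary to what your algorithm outputs.

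Second, the bounding step. Your ``folding: a long clean leg can be shortcut by a flip without destroying the induced-subgraph property'' is the assertion that needs proof, not an argument for it. The paper's cut-and-paste is concrete: if some $J\in\mathcal{J}$ misses the ball of radius $8|\Psi||\Phi|$, a pigeonhole on $A(\Psi)$-orbits produces two non-transverse hyperplanes $A,B$ in the same orbit separating $1$ from $J$; translating the sub-collection beyond $B$ by the minimal element $t(B,A)$ sending $B$ to $A$ is shown to preserve the crossing graph while strictly decreasing $\sum_{J}d(1,J)$, so the process terminates. The non-trivial content is precisely that this surgery preserves \emph{all} transversality and non-transversality relations, including between moved and unmoved hyperplanes; your sketch gives no reason for that. (Two smaller inaccuracies: $\mathrm{QM}(\Psi)$ does not become a square complex in the girth $>3$ case --- its cliques $g\langle u\rangle$ are still infinite complete graphs, only the \emph{prisms} are at most two-dimensional; and the girth hypothesis on $\Phi$ enters through the $\mathbb{F}_2$-in-the-centraliser argument above, not through $K_4^-$ considerations.) With these two steps supplied, your enumeration-and-test algorithm is exactly the paper's.
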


\noindent
A \emph{basis} of a right-angled Artin group $A$ refers to a generating set $R \subset A$ such that the inclusion map $R \hookrightarrow A$ extends to an isomorphism $A(C_R) \to A$, where $C_R$ denotes the commutation graph of $R$. Theorem~\ref{thm:Algo} is proved in \cite{MR3436157}, based on \cite{MR3039768}. The proof we give follows \cite{MR4359526}. 

\medskip \noindent
The theorem will be an easy consequence of the following statement: 

\begin{prop}\label{prop:IffSubgraph}
Let $\Phi,\Psi$ be two finite graphs of girth $>3$. If $A(\Phi)$ embeds into $A(\Psi)$, then $\mathrm{QM}_{|\Phi|}(\Psi)$ contains a collection $\mathcal{J}$ of hyperplanes intersecting the ball of radius $8|\Psi| |\Phi|$ whose crossing graph is isomorphic to $\Phi$. Moreover, if $\mathcal{J}=\{g_1J_{u_1},\ldots, g_kJ_{u_k}\}$ then $\left\{ g_1u_1^{|\Phi|} g_1^{-1}, \ldots, g_ku_k^{|\Phi|}g_k^{-1} \right\}$ is a basis for a subgroup isomorphic to $A(\Phi)$. 
\end{prop}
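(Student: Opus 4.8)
The statement packages three tasks: (i) produce \emph{some} collection $\mathcal{J}$ of hyperplanes of $\mathrm{QM}_{|\Phi|}(\Psi)$ whose crossing graph is isomorphic to $\Phi$; (ii) arrange that the conjugators $g_i$ are short enough that each $g_iJ_{u_i}$ meets the ball of radius $8|\Psi||\Phi|$; (iii) upgrade $\mathcal{J}$ to the announced basis. I would dispatch (iii) first, as it is the most mechanical: given that $\mathcal{J}=\{g_1J_{u_1},\dots,g_kJ_{u_k}\}$ has crossing graph $\Phi$, I would apply Proposition~\ref{prop:SubRAAG} to the action of $A(\Psi)$ on $\mathrm{QM}_{|\Phi|}(\Psi)$, taking as rotative elements $r_{g_iJ_{u_i}}=g_iu_i^{|\Phi|}g_i^{-1}$; these are non-trivial, lie in the rotative-stabilisers, and permute the sectors freely because $\Psi$ being triangle-free forces the cliques of $\mathrm{QM}_{|\Phi|}(\Psi)$ to be the infinite cyclic cosets $g\langle u^{|\Phi|}\rangle$. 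The only delicate point is the constant $N$ of that proposition: the ``forbidden'' sectors of a hyperplane $J\in\mathcal{J}$ are exactly those meeting $x_0$ or meeting another hyperplane of $\mathcal{J}$, hence there are at most $|\mathcal{J}|=|\Phi|$ of them, and since $r_J$ acts freely on the (infinite) set of sectors the exponent $|\Phi|$ is already admissible. This is precisely why the power $|\Phi|$ appears, and it yields that $\{g_1u_1^{|\Phi|}g_1^{-1},\dots,g_ku_k^{|\Phi|}g_k^{-1}\}$ is a basis of a subgroup isomorphic to $A(\Phi)$.

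\textbf{Task (i), the elliptic case.} Everything thus reduces to producing $\mathcal{J}$. I would first record the easy situation: suppose the embedding $\iota\colon A(\Phi)\hookrightarrow A(\Psi)$ already sends each standard generator $v$ of $A(\Phi)$ to a power of a conjugate of a standard generator, $\iota(v)=g_vu_v^{k_v}g_v^{-1}\in\mathrm{stab}_\circlearrowright(g_vJ_{u_v})$. Then $\mathcal{J}:=\{g_vJ_{u_v}\mid v\in V(\Phi)\}$ has crossing graph $\Phi$. Indeed, two of these hyperplanes cannot coincide for $v\neq w$, since otherwise $\iota(v),\iota(w)$ would lie in a common infinite cyclic rotative-stabiliser, contradicting that $\langle v,w\rangle$ (hence its image) is non-cyclic by Corollary~\ref{cor:SubGraph}. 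If $\{v,w\}\in E(\Phi)$ then $\iota(v),\iota(w)$ commute, and because commuting powers of two elements of a right-angled Artin group force the elements themselves to commute (uniqueness of roots, as noted after Theorem~\ref{thm:Centralisers}), the rotative-stabilisers $g_v\langle u_v\rangle g_v^{-1}$ and $g_w\langle u_w\rangle g_w^{-1}$ commute, so the two distinct hyperplanes are transverse. If $\{v,w\}\notin E(\Phi)$ then $\langle\iota(v),\iota(w)\rangle\cong\mathbb{F}_2$, so $\iota(v),\iota(w)$ do not commute, so $g_vJ_{u_v}$ and $g_wJ_{u_w}$ are not transverse. Hence the crossing graph of $\mathcal{J}$ is exactly $\Phi$.

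\textbf{Task (i), reducing to the elliptic case.} The heart of the argument is to reduce the general case to the previous one, i.e.\ to prove that \emph{some} embedding of $A(\Phi)$ into $A(\Psi)$ sends every standard generator into a rotative-stabiliser of a hyperplane of $\mathrm{QM}(\Psi)$ --- equivalently that $\Phi$ embeds as an induced subgraph of the crossing graph $\mathscr{C}(\Psi)$. This is the hard (Kim--Koberda) direction, and I would establish it via the free action of $A(\Phi)$ on $\mathrm{QM}(\Psi)$ by induction on $|V(\Phi)|$. Fix a vertex $v\in V(\Phi)$; its link $L=\mathrm{link}_\Phi(v)$ is an independent set because $\Phi$ has girth $>3$, so $\langle\mathrm{star}_\Phi(v)\rangle=\langle v\rangle\times\langle L\rangle$. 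Writing $\iota(v)$ in cyclically reduced form and applying Theorem~\ref{thm:Centralisers}, the image of $\langle L\rangle$ lands in the centraliser $h(\langle c_1\rangle\times\cdots\times\langle c_m\rangle\times\langle\mathrm{link}(\mathrm{supp}(\iota(v)))\rangle)h^{-1}$; here the girth hypothesis is crucial, since as soon as $\mathrm{supp}(\iota(v))$ contains an edge its link in $\Psi$ is empty, so whenever $\iota(v)$ is not already conjugate to a power of a generator the join decomposition of $\mathrm{supp}(\iota(v))$ and the factors $\langle c_i\rangle$ are rigid enough to let one ``untwist'' $\iota(v)$ onto the rotative-stabiliser of a hyperplane crossing its axis (compare the doubling-along-a-star example of Lecture~3) while only conjugating $\iota$ and preserving all commutation relations. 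Having made $\iota(v)$ elliptic, I would delete $v$, run the induction on $A(\Phi\setminus v)$, and reinstate $v$ using that its image commutes with precisely the images of $L$. This straightening step --- showing a loxodromic generator can always be rectified without destroying a commutation relation, with explicit control on the conjugator --- is the main obstacle, and it is exactly the point at which girth $>3$ cannot be weakened.

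\textbf{Task (ii), locality.} Finally, to obtain the radius estimate I would carry word lengths through the induction: at each of the at most $|\Phi|$ stages one conjugates by a ``rotation'' element of length $O(|\Psi|)$ and then replaces each $g_i$ by a shortest representative of its coset modulo $\langle\mathrm{star}(u_i)\rangle=\mathrm{stab}(J_{u_i})$. A direct bookkeeping of these contributions gives $|g_i|\leq 8|\Psi||\Phi|$, whence every hyperplane $g_iJ_{u_i}$, read inside $\mathrm{QM}_{|\Phi|}(\Psi)$, meets the ball of radius $8|\Psi||\Phi|$ about the basepoint. Combined with Task (iii), this is the claimed conclusion; the constant here is the least interesting part and I would not optimise it.
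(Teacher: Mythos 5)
Your ``Task (i), the elliptic case'' is sound and matches the paper's reasoning, but the decisive step --- extracting from an arbitrary embedding $\varphi\colon A(\Phi)\hookrightarrow A(\Psi)$ a collection of hyperplanes with crossing graph $\Phi$ --- is exactly the part you leave open. Your plan is to ``straighten'' the embedding by induction so that every standard generator becomes elliptic (conjugate into a rotative-stabiliser); you concede this untwisting step is the main obstacle, and as stated it is not a proof. Worse, the target of the reduction is too strong: the image of a generator under an embedding need not be conjugate to a power of a generator of $A(\Psi)$ (leaves of $\Phi$ are precisely the vertices for which nothing forces this; compare $\mathbb{F}_3\hookrightarrow\mathbb{F}_2$), and there is no reason one can rectify a single generator ``while preserving all commutation relations.'' The paper avoids straightening entirely. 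For a vertex $u$ of degree $\geq 2$ in $\Phi$, two of its neighbours are non-adjacent (girth $>3$), so $C_{A(\Phi)}(u)\supseteq\mathbb{F}_2$ and hence $C_{A(\Psi)}(\varphi(u))$ is non-abelian; Theorem~\ref{thm:Centralisers} combined with girth $>3$ of $\Psi$ then forces $\varphi(u)=g(u)a(u)^{k(u)}g(u)^{-1}$, so ellipticity of non-leaf generators comes for free, with no modification of $\varphi$. For a leaf $\ell$ with neighbour $u$ and a second neighbour $v$ of $u$, one does \emph{not} make $\varphi(\ell)$ elliptic: one assigns to $\ell$ the hyperplane $\varphi(\ell)\,g(v)J_{a(v)}$, the translate by $\varphi(\ell)$ of the hyperplane already attached to $v$, and verifies by a short commutation computation that this adds exactly a leaf at $g(u)J_{a(u)}$ in the crossing graph. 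This assignment of hyperplanes to leaves is the idea missing from your outline, and without it (or a genuine proof of your straightening claim) the argument does not go through.

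Two further issues. Your radius estimate is obtained by bookkeeping conjugator lengths through the induction, which cannot be assessed since the induction is the gap; the paper instead proves the bound by a cut-and-paste on the collection $\mathcal{J}$ itself: if some hyperplane misses the ball of radius $8|\Psi||\Phi|$, a pigeonhole over the $|\Psi|$ orbits of hyperplanes produces two non-transverse hyperplanes $A,B$ in the same $A(\Psi)$-orbit separating $1$ from it, and translating the part of $\mathcal{J}$ beyond $B$ by the shortest element sending $B$ to $A$ preserves the crossing graph while strictly decreasing $\sum_{J\in\mathcal{J}}d(1,J)$. Finally, your justification of the exponent $|\Phi|$ (``at most $|\Phi|$ forbidden sectors, hence $|\Phi|$ is admissible'') is not a valid deduction: in $\mathrm{QM}(\Psi)$ the sectors of a hyperplane are indexed by $\mathbb{Z}$, and in the ping-pong of Proposition~\ref{prop:SubRAAG} what must be controlled is the \emph{spread} of the forbidden sectors, not their number. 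The relevant point is rather that, after the cut-and-paste, the whole collection fits inside $|\Phi|$ chosen sectors of each hyperplane, i.e.\ inside a copy of $\mathrm{QM}_{|\Phi|}(\Psi)$.
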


\noindent
\begin{minipage}{0.49\linewidth}
\begin{center}
\includegraphics[width=0.9\linewidth]{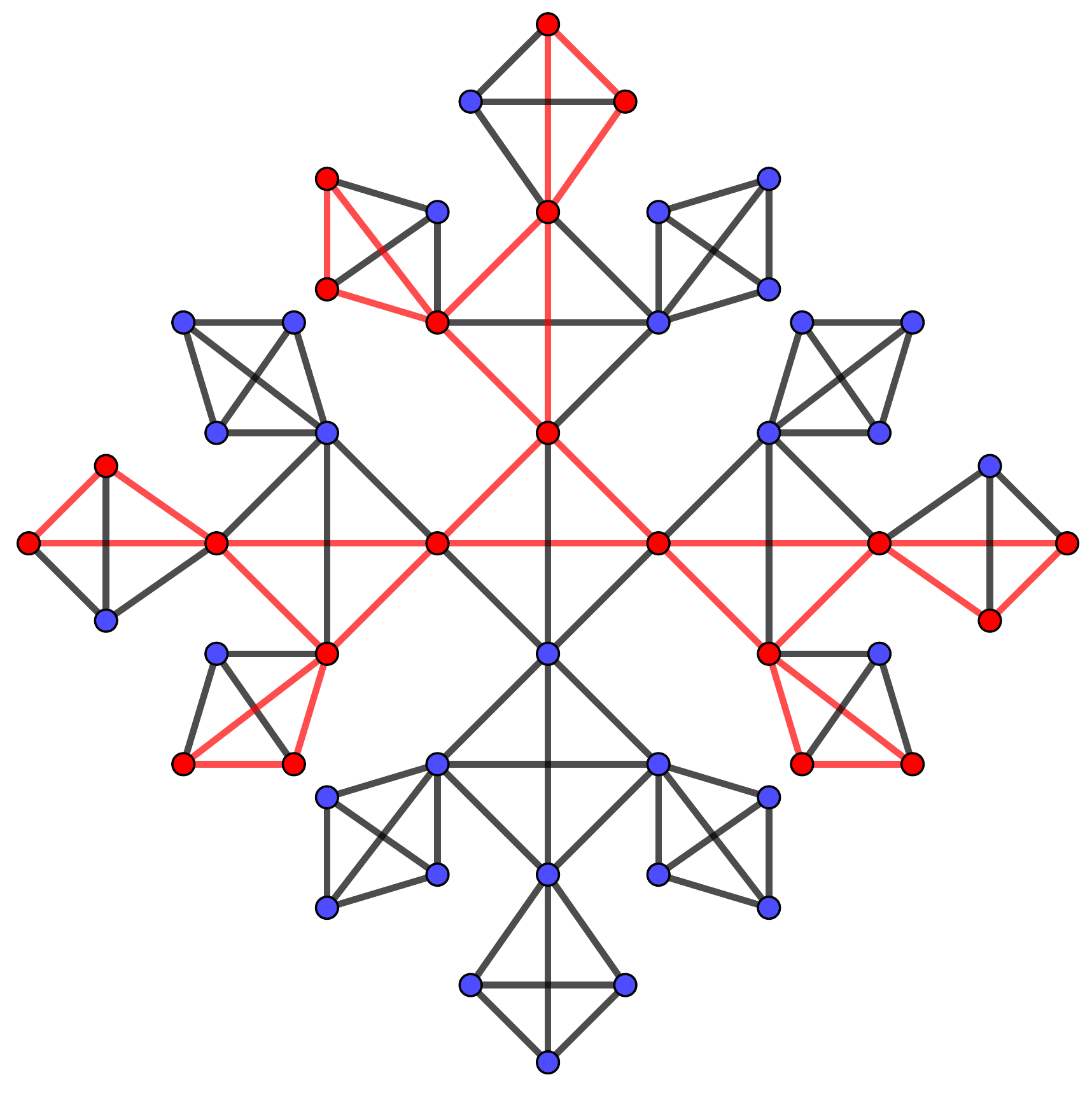}
\end{center}
\end{minipage}
\begin{minipage}{0.5\linewidth}
Here, given an $\ell \geq 1$, $\mathscr{C}_\ell(\Gamma)$ denotes the crossing graph of the subgraph\footnotemark $\mathrm{QM}_\ell(\Gamma)$ of $\mathrm{QM}(\Gamma)$ given by the elements of $A(\Gamma)$ that cannot be represented by graphically reduced words with powers of generators larger than $\ell$. Roughly speaking, $\mathrm{QM}_\ell(\Gamma)$ is a convex subgraph of $\mathrm{QM}(\Gamma)$ obtained by considering only $\ell$ sectors for each hyperplane among the infinitely many sectors it delimits.
\end{minipage}
\footnotetext{It is worth noticing that $\mathrm{QM}_\ell(\Gamma)$ coincides with the Cayley graph of the graph product $\Gamma \mathbb{Z}_\ell$ over $\Gamma$ of cyclic groups of order $\ell$. As a consequence, we deduce that $A(\Phi)$ embeds into $A(\Psi)$ if and only so does $\Phi \mathbb{Z}_?$ into $\Psi \mathbb{Z}_?$.}

\medskip \noindent
Observe that Theorem~\ref{thm:Algo} follows. Indeed, given our two graphs $\Phi$ and $\Psi$, an algorithm goes as follows.
\begin{enumerate}
	\item Draw the crossing graph $\mathscr{C}_{|\Phi|}(\Psi)$ of the hyperplanes in $\mathrm{QM}_{|\Phi|}(\Psi)$ that cross the ball of radius $8|\Psi| |\Phi|$. This is algorithmically possible thanks to Lemma~\ref{lem:QMGamma} and Exercice~\ref{ex:TransverseHyp}.
	\item Look for an induced subgraph isomorphic to $\Phi$ in $\mathscr{C}_{|\Phi|}(\Psi)$. This requires only finitely many operations since our graph is finite.
	\item If there exists such a graph, then Proposition~\ref{prop:IffSubgraph} gives us the basis of a subgroup isomorphic to $A(\Phi)$. Otherwise, $A(\Phi)$ does not embed into $A(\Psi)$. 
\end{enumerate}
It is worth noticing that Proposition~\ref{prop:IffSubgraph}, in particular, proves the converse of Proposition~\ref{cor:KK} for graphs of girth $>3$. Compare with \cite{MR3072113}. This allows us to solve the Embedding Problem for specific families of graphs; see Exercice~\ref{ex:SometimesIFF}. 

\begin{proof}[Proof of Proposition~\ref{prop:IffSubgraph}.]
We begin by showing that there exists a collection of hyperplanes in $\mathrm{QM}(\Psi)$ whose crossing graph is $\Phi$.

\medskip \noindent
Let $\varphi : A(\Phi) \hookrightarrow A(\Psi)$ be an injective morphism and let $\Phi_0$ denote the graph obtained from $\Phi$ by removing the leaves (i.e. the vertex of degree one). For every $u \in \Phi_0$, the centraliser of $u$ contains a non-abelian free subgroup (e.g. the subgroup generated by two neighbours of $u$ in $\Phi$). Consequently, $\varphi(u)$ must be an element whose centraliser is non-abelian. Because $\Psi$ has girth $>3$, it follows from the description of centralisers provided by Theorem~\ref{thm:Centralisers} that $\varphi(u)=g(u) a(u)^{k(u)} g(u)^{-1}$ for some $g(u) \in A(\Psi)$, $a(u) \in \Psi$, and $k(u) \in \mathbb{Z}_{\neq 0}$. More geometrically, this means that $\varphi(u)$ belongs to the rotative-stabiliser of the hyperplane $g(u) J_{a(u)}$. Set $\mathcal{J}_0:= \{ g(u)J_{a(u)} \mid u \in \Phi \}$. We know from Proposition~\ref{prop:SubRAAG} that, for $N$ large enough, $\langle \varphi(u)^N, \ u \in \Phi \rangle$ is isomorphic to $A(\mathscr{C}(\mathcal{J}_0))$. Therefore, $\mathscr{C}(\mathcal{J}_0)$ has to be isomorphic to $\Phi_0$. (More precisely, the map $u \mapsto g(u) J_{a(u)}$ induces an isomorphism $\Phi_0 \to \mathscr{C}(\mathcal{J}_0)$.)

\medskip \noindent
So far, we have found a collection of hyperplanes $\mathcal{J}_0$ whose crossing graph is $\Phi_0$. If $\Phi$ has no leaf, then our initial assertion is proved. In the general case, here is an additional argument.

\medskip \noindent
Let $\ell_1, \ldots, \ell_r$ denote the leaves of $\Phi$. For every $1 \leq i \leq r$, let $u_i$ denote the unique neighbour of $\ell_i$ in $\Phi$ and let $v_i \in \Phi_0$ be a neighbour of $u_i$ distinct from $\ell_i$. We associate to each $\ell_i$ the hyperplane $\varphi(\ell_i) g(v_i)J_{a(v_i)}$. We claim that the crossing graph of the collection $\mathcal{J}$ obtained from $\mathcal{J}_0$ by adding these new hyperplanes is isomorphic to $\Phi$. 
\begin{itemize}
	\item Because $u_i$ and $v_i$ are adjacent in $\Phi_0$, the hyperplanes $g(u_i)J_{a(u_i)}$ and $g(v_i)J_{a(v_i)}$ are transverse. Since $\ell_i$ is adjacent to $u_i$, $\varphi(\ell_i)$ stabilises the hyperplane $g(u_i) J_{a(u_i)}$, which implies that $\varphi(\ell_i)g(v_i)J_{a(v_i)}$ is transverse to $g(u_i)J_{a(u_i)}$.
	\item If $\varphi(\ell_i)g(v_i)J_{a(v_i)}$ is transverse to $g(w)J_{a(w)}$ for some $w \in \Phi_0$, then the rotative-stabilisers have to commute, which implies that $\varphi(\ell_i v_i)$ commute with $\varphi(w)$. The only possibility is that $w=u_i$. 
	\item If $\varphi(\ell_i)g(v_i)J_{a(v_i)}$ is transverse to $\varphi(\ell_j)g(v_j)J_{a(v_j)}$, then the rotative-stabilisers have to commute, which implies that $\varphi(\ell_iv_i)$ commute with $\varphi(\ell_jv_j)$, which can only happen when $i=j$.
\end{itemize}
Thus, we have proved that $\mathscr{C}(\mathcal{J})$ is obtained from $\mathscr{C}(\mathcal{J}_0)$ by adding a leaf to each $g(u_i)J_{a(u_i)}$, which proves that $\mathscr{C}(\mathcal{J})$ is isomorphic to $\Phi$, as desired.

\medskip \noindent
Now, assume that $\mathcal{J}$ contains a hyperplane that does not cross the ball centered at $1$ of radius $8|\Psi| |\Phi|$. In other words, $\mathcal{J}$ contains a hyperplane $J$ that is very far from $1$. This implies that we can find two non-transverse hyperplane $A,B$ in $\mathrm{QM}(\Gamma)$ that separate $1$ from $J$ and that lie in the same $A(\Psi)$-orbit.
\begin{center}
\includegraphics[width=\linewidth]{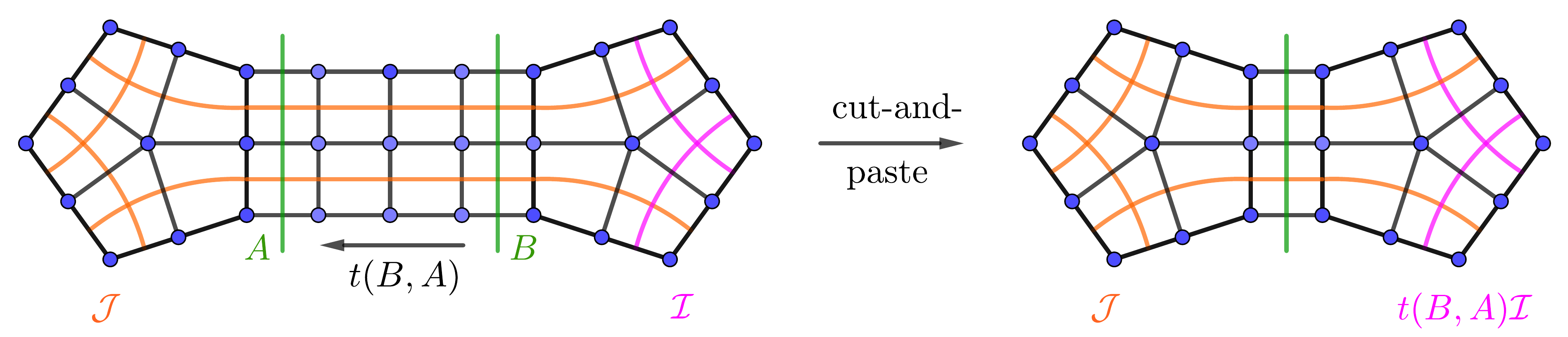}
\end{center}
Let $t(B,A)$ denote the unique element of minimal length in $A(\Psi)$ that sends $B$ to $A$. If $\mathcal{I}$ denotes the subcollection of $\mathcal{J}$ given by the hyperplanes that are separated from $1$ by $B$, consider the new collection $\mathcal{J}':= \mathcal{J} \backslash \mathcal{I} \sqcup t(B,A) \mathcal{I}$. It is obtained from $\mathcal{J}$ by a \emph{cut-and-paste}. We claim that $\mathcal{J}'$ has the same crossing graph as $\mathcal{J}$ and that
$$\sum\limits_{J \in \mathcal{J}'} d_\mathrm{QM}(1,J) < \sum\limits_{J \in \mathcal{J}} d_\mathrm{QM}(1,J).$$
We refer to \cite[Section~5.1]{MR4359526} for the details (written for median graphs and right-angled Coxeter groups, but which can be repeated word for word). By iterating the process, we eventually get a collection of hyperplanes $\mathcal{J}''$ in $\mathrm{QM}(\Gamma)$ whose crossing graph is isomorphic to $\Phi$ and all of whose element intersect the ball of radius $8|\Psi| |\Phi|$. 

\medskip \noindent
For each hyperplane $J$ of $\mathrm{QM}(\Psi)$, select $|\Phi|$ sectors delimited by $J$ in such a way that every hyperplane in $\mathcal{J}$ intersects at least one of these sectors. Let $S(J)$ denote the union of all these sectors delimited by $J$. The intersection 
$$\bigcap\limits_{J \text{ hyperplane}} S(J) \subset \mathrm{QM}(\Psi)$$
defines a convex subgraph isomorphic to $\mathrm{QM}_{|\Phi|}(\Psi)$ in which $\mathcal{J}''$ induces a collection of hyperplanes with $\Phi$ as its crossing graph. 
\end{proof}

\addcontentsline{toc}{section}{Exercices}
\section*{Exercices}

\begin{exercice}\label{ex:TransverseHyp}
Let $\Gamma$ be a finite graph. For all $u,v \in V(\Gamma)$ and $g,h \in A(\Gamma)$, show that the hyperplanes $gJ_u$ and $hJ_v$ are transverse in $\mathrm{QM}(\Gamma)$ if and only if $\{u,v \} \in E(\Gamma)$ and $g^{-1}h \in \langle \mathrm{star}(u) \rangle \cdot \langle \mathrm{star}(v) \rangle$. Deduce an algorithm that determine whether or not two given hyperplanes in $\mathrm{QM}(\Gamma)$ are transverse. 
\end{exercice}

\begin{exercice}\label{ex:SometimesIFF}
For every $k \geq 4$, we denote by $C_k$ the cycle of length $k$ and $P_k$ the path of length $k$.
\begin{enumerate}
	\item Show that $A(C_p)$ embeds in $A(C_q)$ if and only if $q-4$ divides $p-4$.	
	\item Given a finite graph $\Gamma$, show that $A(\Gamma)$ embeds in $A(P_4)$ if and only if $\Gamma$ is a forest. 
	\item Given a graph $\Gamma$ of girth $>3$, show that $\mathbb{F}_2 \times \mathbb{F}_2$ embeds into $A(\Gamma)$ if and only if $\Gamma$ contains an induced $4$-cycle.
\end{enumerate}
\end{exercice}

\newpage

\addcontentsline{toc}{section}{References}

\bibliographystyle{alpha}
{\footnotesize\bibliography{MiniCours}}

\Address

%

\end{document}